\newtheorem{assumption}{Assumption}
\newtheorem{Exa}{Example}[section]
\newcommand{\ri}{\makebox{\rm ri}}
\newcommand{\parsub}{\makebox{\rm par}}
\newcommand{\mumin}{\mu_{\mbox{\scriptsize\rm min}}}
\newcommand{\mumax}{\mu_{\mbox{\scriptsize\rm max}}}
\newcommand{\xnew}{x^+}
\newcommand{\xsupnew}{x^{\mbox{\scriptsize\rm new}}}
\newcommand{\ip}[2]{\mbox{$\langle #1,#2 \rangle$}}
\def\tto{\;{\lower 1pt \hbox{$\rightarrow$}}\kern -10pt
           \hbox{\raise 2.8pt \hbox{$\rightarrow$}}\;}
\newcommand{\beq}{\begin{equation}}
\newcommand{\eeq}{\end{equation}}
\newcommand{\R}{\Re}
\newcommand{\cA}{{\cal A}}
\newcommand{\cC}{{\cal C}}
\newcommand{\cM}{{\cal M}}
\newcommand{\cN}{{\cal N}}
\newcommand{\cU}{{\cal U}}
\newcommand{\cV}{{\cal V}}
\def\eqnok#1{(\ref{#1})}
\newcommand{\dist}{\mbox{\rm dist}\,}
\def\sjwcomment#1{{\em [SJW: #1]}}
\def\noprint#1{}
\newcommand{\btab}{\begin{tabbing}
\ \ \= thenn \= thenn \= thenn \= thenn \= thenn \= \kill}
\newcommand{\etab}{\end{tabbing}}
\def\half{{\textstyle{1\over 2}}}
\title{A proximal method for composite minimization\footnote{\today}}
\author{A.~S. Lewis and S.~J. Wright}
\begin{document}

\maketitle

\begin{abstract}
  We consider minimization of functions that are compositions of convex or
  prox-regular functions (possibly extended-valued) with smooth vector functions. A wide variety
  of important optimization problems fall into this framework.
  We describe an algorithmic framework based on a subproblem constructed from a linearized approximation
  to the objective and a regularization term.   Properties of local solutions of this subproblem underlie both a global convergence result and an identification property of the active manifold containing the solution of the original problem.  Preliminary computational results on both convex and nonconvex examples are promising.
\end{abstract}

\begin{keywords}
  prox-regular functions, polyhedral convex functions, sparse
  optimization, global convergence, active constraint identification
\end{keywords}

\begin{AMS}
49M37, 90C30
\end{AMS}

\section{Introduction}
\label{sec:statement}

We consider optimization problems of the form
\begin{equation} \label{hc}
\min_x\, h\big(c(x)\big),
\end{equation}
where the inner function $c:\R^n \to \R^m$ is smooth.  
The outer function $h: \R^m \to [-\infty,+\infty]$ may be
nonsmooth, but is usually convex (even polyhedral), and sufficiently
well-structured to allow us to solve, relatively easily, subproblems
of the form
\begin{equation} \label{gen-prox}
\min_d \, h\big(\Phi(d)\big) + \frac{\mu}{2} |d|^2,
\end{equation}
for {\em affine} maps $\Phi$ and scalars $\mu>0$ (where $|\cdot|$
denotes the Euclidean norm).  We analyze a ``proximal'' method for the
problem \eqnok{hc}.  In its simplest form, for a finite convex
function $h$, the method is shown below as
Algorithm~\ref{alg:proxdescent1}.

\begin{algorithm}[ht!]
\caption{ProxDescent for Finite Convex $h$}
\label{alg:proxdescent1}
\begin{algorithmic}
\STATE  Define constants $\tau>1$, $\sigma \in (0,1)$,  and $\mumin > 0$; 
\STATE Choose $x_0 \in \R^n$, $\mu_0 \ge \mumin$; 
\FOR{$k=0,1,2,\dotsc$}
\STATE Set accept $\leftarrow$ false; 
\WHILE{not accept}
\STATE Find the minimizer $d$ of the function $h\big(c(x) + \nabla c(x)d\big) + \half\mu|d|^2$
\STATE (terminating if $d=0$);
\IF{$h\big(c(x)\big) - h\big(c(x+d)\big) \ge \sigma  
\left[ h\big(c(x)\big) - h(c(x)+\nabla c(x) d) \right]$}
\STATE $\mu \leftarrow \max(\mumin, \mu/\tau)$; 
\STATE accept $\leftarrow$ true; \\
\ELSE
\STATE $\mu  \leftarrow \tau \mu$;
\ENDIF
\ENDWHILE
\STATE $x \leftarrow x+d$;
\ENDFOR
\end{algorithmic}
\end{algorithm}


The method repeatedly solves a {\em proximal linearized subproblem} of
the form
\begin{equation} \label{pls} 
\min_{d} \, h_{x,\mu}(d) := 
h\big(c(x)+ \nabla c(x) d\big) + \frac{\mu}{2} |d|^2, 
\end{equation}
to find a trial step $d$, where the linear map $\nabla c(x):\R^n \to
\R^m$ is the derivative of the map $c$ at $x$ (representable by the $m
\times n$ Jacobian matrix).  In the algorithmic framework that we
discuss later, where the function $h$ is not restricted to being
polyhedral or convex, the subproblem solution $d$ is just a first
approximation to the step.  If $h$ is sufficiently well-structured ---
an assumption we make concrete using ``partial smoothness,'' a
generalization of the idea of an active set in nonlinear programming
--- we may then be able to enhance the step, possibly with the use of
higher-order derivative information.

Although many important problems of the form \eqnok{hc} involve finite
convex functions $h$, we explore extensions to broader classes of
functions $h$.  Specifically, we allow that
\begin{itemize}
\item
$h$ may be extended-valued, allowing it to incorporate constraints
  that must be enforced;
\item
$h$ is ``prox-regular'' rather than convex.
\end{itemize}
(We note in passing that our analysis extends easily to the case where
the function $c$ is defined only locally.)  This broader framework
requires additional technical overhead, but we point out throughout
the simplifications that are available in the case of continuous
convex $h$, and in particular polyhedral $h$.

\subsection{Outline}

In the next subsection, we discuss the building blocks from
variational analysis that are used in later sections, focusing on key
ideas that may be unfamiliar to many readers --- ``prox-regularity''
and ``partial smoothness'' --- and deferring more standard formal
definitions to an Appendix.  In Section~\ref{sec:examples}, we
describe how a wide variety of examples can be posed as composite
optimization problems of the form \eqref{hc}.  We include problems
from approximation, nonlinear programming, and regularized
minimization, including nonconvex examples.  Given its prevalence,
historical importance, and significance in building intuition and
terminology, we pay particular attention to the case in which the
function $h$ is finite and polyhedral.  Next, in
Section~\ref{sec:related}, we survey the extensive body of related
work.

Section~\ref{sec:sized} contains our main theoretical tools,
pertaining to the subproblem \eqref{pls}.  We note that, since any local
solution $\bar x$ for the problem \eqnok{hc} is {\em critical\/}
(meaning that $0 \in \partial (h \circ c)(\bar{x})$, where $\partial$
denotes the subdifferential), a chain rule typically implies the
existence of a vector $\bar{v}$ such that
\begin{equation} \label{hc_necc}
\bar{v} \in \partial h(\bar c) \cap \mbox{\rm Null}(\nabla c(\bar x)^*),
\end{equation}
where $\bar{c} := c(\bar{x})$ and $\mbox{}^*$ denotes the adjoint map.
In examples, we can interpret the vector $\bar{v}$ as a Lagrange
multiplier.  We begin by showing that, when the current point $x$ is
near the critical point $\bar{x}$, the proximal linearized subproblem
\eqref{pls} has a local solution $d$ of size $O(| x-\bar{x} |)$.  To
illuminate this idea, consider first a function $h$ that is convex,
lower semicontinuous, and never $-\infty$.  Assuming that the vector
$c(x)+ \nabla c(x) d$ lies in the domain of $h$ for some step $d \in
\R^n$, the subproblem (\ref{pls})
involves minimizing a strictly convex function with nonempty compact
level sets, and thus has a unique solution $d=d(x)$.  If we assume
slightly more --- that $c(x) + \nabla c(x)d$ lies in the relative
interior of the domain of $h$ for some $d$ (as holds obviously if $h$
is continuous at $c(x)$), a standard chain rule from convex analysis
implies that $d=d(x)$ is the unique solution of the following
inclusion:
\begin{equation} \label{pls_opt}
\nabla c(x)^* v + \mu d = 0, \;\; 
\makebox{\rm for some $v \in \partial h\big(c(x) + \nabla c(x) d\big)$}.
\end{equation}

When $h$ is prox-regular rather than convex, reasonable conditions
ensure that the subproblem (\ref{pls})
still has a unique local solution close to zero, for $\mu$
sufficiently large, also characterized by property \eqnok{pls_opt}.
Then, by projecting the point $x+d$ onto the inverse image under the
map $c$ of the domain of the function $h$, we can obtain a step that
reduces the objective.

The final part of Section \ref{sec:sized} focuses on the common
situation in which the function $h$ is partly smooth at the point
$\bar c$ relative to a certain manifold $\cM$ --- a generalization of
the surface defined by the active constraints in classical nonlinear
programming.  We give conditions guaranteeing that, when $x$ is close
to $\bar{x}$, the algorithm ``identifies'' $\cM$, in the sense that
the solution $d$ of the subproblem (\ref{gen-prox}) has $\Phi(d) \in
\cM$.

Section~\ref{sec:proxdesc} presents the ProxDescent algorithm in full
generality and proves a global convergence result.  Finally,
Section~\ref{sec:computational} describes some promising preliminary
computational experiments, on convex and nonconvex regularized linear
least-squares problems, together with a polyhedral penalty function
arising from a nonlinear programming application.

\subsection{Variational Analysis Tools} \label{sec:definitions}

We begin with some important basic ideas and notation. We denote by
$P_S(v)$ the usual Euclidean projection of a vector $v \in \R^m$ onto
a closed set $S \subset \R^m$. The {\em distance} between $x$ and the
set $S$ is
\[
\dist (x,S) = \inf_{y \in S} |x-y|.  
\]
We use $B_{\epsilon}(x)$ to denote the closed Euclidean ball of radius
$\epsilon$ around a point $x$.

We write $\bar \R$ for the extended reals $[-\infty,+\infty]$, and
consider a function $h:\R^m \to \bar \R$.  The notion of the
subdifferential of $h$ at a point $\bar c \in \R^m$, denoted $\partial
h(\bar c)$, provides a powerful unification of the classical gradient
of a smooth function and the subdifferential from convex analysis.  It
is a set of generalized gradient vectors, coinciding exactly with the
classical convex subdifferential \cite{Roc70} when $h$ is lower
semicontinuous and convex, and equaling $\{\nabla h(\bar c)\}$ when
$h$ is $\cC^1$ around $\bar c$.  For formal definitions from
variational analysis, we refer the reader to standard texts such as
\cite{Roc98} and \cite{Mor06}.  For ease of reading, we collect such
definitions (along with brief discussions) in an Appendix.

Since the notion of ``prox-regularity'' is crucial for our development,   
%
we quote a full definition here, from \cite[Definition~13.27]{Roc98}.

\begin{definition} \label{def:proxreg}
A function $h:\R^m \to \bar \R$ is {\em prox-regular at a point $\bar{c} \in \R^m$ for a
  subgradient $\bar{v} \in \partial h(\bar{c})$} if $h$ is
finite at $\bar{c}$, locally lower semicontinuous around $\bar{c}$,
and there exists $\rho>0$ such that
\[
h(c') \ge h(c) + \langle v, c'-c \rangle - \frac{\rho}{2} |c'-c|^2
\]
whenever points $c,c' \in \R^m$ are near $\bar{c}$ with the value
$h(c)$ near the value $h(\bar{c})$ and for every subgradient $v
\in \partial h(c)$ near $\bar{v}$. Further, $h$ is {\em prox-regular
  at $\bar{c}$} if it is prox-regular at $\bar{c}$ for every $\bar{v}
\in \partial h(\bar{c})$.
\end{definition}



\noindent
While this definition appears formidably technical in its generality,
it holds commonly in practice.  Prevalent examples include continuous
functions $h$ with the property that the function $h + \kappa
|\cdot|^2$ is convex for some constant $\kappa$.  For further
discussion, see the Appendix.

A weaker property than the prox-regularity of a function $h$ is
``subdifferential regularity.''  Formal definitions and discussion can
be found in standard texts and in the Appendix.  Here, we simply note
that both $\cC^1$ functions and lower semicontinuous convex functions
are subdifferentially regular, as are sums of such functions.

We next turn to the idea of ``partial smoothness'' introduced by
Lewis~\cite{Lew03a}, a variational-analytic formalization of the
notion of the active set in classical nonlinear programming: see also
Hare and Lewis~\cite[Definition~2.3]{HarL04}.  A set $\cM \subset
\R^m$ is a {\em manifold about} a point $\bar c \in \cM$ if it can be
described locally by a collection of smooth equations with linearly
independent gradients.  More precisely, there exists a map $F:\R^m \to
\R^k$ that is $\cC^2$ around $\bar c$, with $\nabla F(\bar c)$
surjective, such that points $c \in \R^m$ near $\bar c$ lie in $\cM$
if and only if $F(c) = 0$.  The {\em normal space} to $\cM$ at $\bar
c$, denoted $N_{\cM}(\bar c)$ is then just the range of $\nabla F(\bar
c)^*$.

\begin{definition} \label{def:ps}
Given a manifold $\cM \subset \R^m$ about a point $\bar c$, a function
$h:\R^m \to \bar \R$ is {\em partly smooth} at $\bar c$ relative to
$\cM$ if $h$ is subdifferentially regular at all points $c \in \cM$
near $\bar{c}$, the dependence of the value $h(c)$ and the (nonempty)
subdifferential $\partial h(c)$ on the point $c \in \cM$ are $\cC^2$
and continuous respectively, and furthermore the affine span of
$\partial h(\bar{c})$ is a translate of the normal space
$N_{\cM}(\bar{c})$.  We refer to $\cM$ as the {\em active manifold}.
\end{definition}

As with prox-regularity, this definition appears technical.  To
illustrate, consider again the example of continuous function $h$ such
that $h+\kappa |\cdot|^2$ is convex for some $\kappa$.  Since such
functions are always subdifferentially regular, partial smoothness
amounts to smoothness of the restriction $h|_{\cM}$, continuity with
respect to the point $c \in \cM$ of the classical directional
derivative $h'(c;d)$ for all fixed directions $d$, and the property
$h'(\bar c;d) \ge -h'(\bar c;-d)$ for all $d \in N_{\cM}(\bar{c})$.


\section{Examples}
\label{sec:examples}

Our basic framework admits a wide variety of interesting
problems, as we show in this section. 

\subsection{Approximation Problems}

\mbox{}

\begin{Exa}[least squares, $\ell_1$, and Huber approximation]
  The formulation \eqnok{hc} encompasses both the usual (nonlinear)
  least squares problem if we define $h(\cdot) = |\cdot |^2$, and the
  $\ell_1$ approximation problem if we define $h(\cdot) = | \cdot
  |_1$, the $\ell_1$ norm. Another popular robust loss function is the
  Huber function defined by $h(c) = \sum_{i=1}^m \phi(c_i)$, where
\[
\phi(c_i) = \begin{cases}
\half c_i^2      & \mbox{\rm ($|c_i| \le T$)} \\
Tc_i - \half T^2 & \mbox{\rm ($|c_i| > T$)}.
\end{cases}
\]
\end{Exa}

\begin{Exa}[sum of Euclidean norms]
Given a collection of smooth vector functions $g_i: \R^n \to \R^{m_i}$,
for $i=1,2,\dotsc,t$, consider the problem
\[
\min_x \, \sum_{i=1}^t | g_i(x) |.
\]
We can place such problems in the form \eqnok{hc} by defining
the smooth vector function $c:\R^n \to \R^{m_1} \times \R^{m_2} \times
\cdots \times \R^{m_t}$ by $c = (g_1,g_2,\ldots,g_t)$, and the
nonsmooth function $h: \R^{m_2} \times \cdots \times \R^{m_t} \to \R$
by
\[
h(g_1,g_2,\ldots,g_t) = \sum_{i=1}^t | g_i |.
\]
\end{Exa}

This form is seen in facility location problems and in regularized
optimization problems with group-sparse regularizers.

\subsection{Nonlinear Programming Penalty Functions}

Next, we consider examples motivated by penalty functions for
nonlinear programming.

\begin{Exa}[$\ell_1$ penalty function]
\label{ex:l1pf}
Consider the following nonlinear program:
\begin{align}
\label{nlp}
\min \, f(x)                                                                     &                              \\
\nonumber
\mbox{\rm subject to} \;\;  g_i(x)                                               & =0  \quad (1\le i\le j),     \\
\nonumber
g_i(x)                                                                           & \le 0 \quad (j \le i \le k), \\
\nonumber
 x                                                                               & \in X,
\end{align}
where the polyhedron $X \subset \R^n$ describes constraints on the variable $x$ that are easy to handle directly.  The $\ell_1$ penalty function formulation is 
\begin{equation} \label{l1pf}
\min_{x \in X} \, f(x) + \nu \sum_{i=1}^{j} | g_i(x)| + \nu \sum_{i=j+1}^{k} \max \big(0,g_i(x)\big),
\end{equation}
where $\nu>0$ is a scalar parameter.  We can express this problem in
the form \eqnok{hc} by defining the smooth vector function
\[
c(x) ~=~ \Big( f(x) \,,\, \big(g_i(x)\big)_{i=1}^{k} \,,\, x \Big) 
~\in~ \R \times \R^k \times \R^n
\]
and the extended polyhedral convex function $h: \R \times \R^k \times
\R^n \to \bar \R$ by
\[
h(f,g,x) = 
\left\{
\begin{array}{ll}
f + \nu {\displaystyle \sum_{i=1}^{j} |g_i| + \nu \sum_{i=j+1}^{k}} \max (0,g_i) & (x \in X)                    \\
+\infty                                                                          & (x \not\in X).
\end{array}
\right.
\]
\end{Exa}


\subsection{The Finite Polyhedral Case} \label{poly}

A generalization of the polyhedral convex function of the previous
subsection is obtained by defining 
\begin{equation} \label{eq:hpoly}
h(c) = \max_{i \in I} \{ \ip{h_i}{c} + \beta_i \},
\end{equation}
where $I$ is a finite set of indices, with $h_i \in \R^m$ and $\beta_i
\in \R$ for all $i \in I$.  We return to this case below to illustrate
much of our theory (in Sections~\ref{sec:unique} and \ref{sec:id}, for
example).

Assume that the map $c \colon \R^n \to \R^m$ is $\cC^1$ around a
critical point $\bar x \in \R^n$ for the composite function $h \circ
c$, and let $\bar c = c(\bar x)$.  Define the set of ``active''
indices
\[
\bar I ~=~ \mbox{argmax} \big\{ \ip{h_i}{\bar c} + \beta_i : i \in I \big\}.
\]
Then, denoting convex hulls by $\mbox{conv}$, we have
$
\partial h (\bar c) = \mbox{conv} \{ h_i : i \in \bar I\}.
$
The basic criticality condition \eqnok{hc_necc} becomes existence
of a vector $\lambda \in \R^{\bar I}$ satisfying
\begin{equation} \label{first}
\lambda \ge 0 
~~~\mbox{and}~~~
\sum_{i \in \bar I} \lambda_i 
\left[
\begin{array}{c}
\nabla c(\bar x)^* h_i \\
1
\end{array}
\right]
=
\left[
\begin{array}{c}
0                      \\
1
\end{array}
\right].
\end{equation}
The subgradient $\bar v$ is then $\sum_{i \in \bar I} \lambda_i h_i$.

Compare this condition with the one obtained from the standard
nonlinear programming framework, which is
\beq
\label{classical}
\min_{(x,t) \in  \Re^n \times \Re}  \;   t     \;\;
\mbox{subject to} \;\; \ip{h_i}{c(x)} + \beta_i + t \le 0 \quad (i \in I).
\eeq
At the point $\big(\bar x,-h(\bar c)\big)$, the conditions
\eqnok{first} are just the standard first-order optimality conditions,
with Lagrange multipliers $\lambda_i$.  The fact that the vector $\bar
v$ in the criticality condition \eqnok{hc_necc} is closely identified
with $\lambda$ via the relationship $\bar v = \sum_{i \in \bar I}
\lambda_i h_i$ motivates our terminology ``multiplier vector''.


\subsection{Regularized Minimization Problems}
\label{sec:regmin}

A large family of instances of \eqnok{hc} arises in the area of
regularized minimization, where the minimization problem has the
following general form:
\begin{equation}
  \label{eq:regmin}
  \min_x \, f(x) + \tau |x|_*
\end{equation}
where $f:\R^n \to \R$ is a smooth objective, while $|x|_*$ is a
continuous, nonnegative, usually nonsmooth function, and $\tau$ is a
nonnegative {\em regularization parameter}. Such formulations arise
when we seek an approximate minimizer of $f$ that is ``simple'' in
some sense; the purpose of the second term $|x|_*$ is to promote this
simplicity property.  Larger values of $\tau$ tend to produce
solutions $x$ that are simpler, but less accurate as minimizers of
$f$. The problem (\ref{eq:regmin}) can be put into the framework
(\ref{hc}) by defining
\begin{equation}
  \label{eq:reghc}
c(x) = \left[
\begin{matrix} f(x) \\ x \end{matrix} \right] \in \R^{n+1}, \qquad
h(f,x) = f + \tau |x|_*.
\end{equation}

We list now some interesting cases of (\ref{eq:regmin}).

\begin{Exa}[$\ell_1$-regularized minimization] \label{ex:l1reg} The
  choice $|\cdot|_*=|\cdot|_1$ in \eqnok{eq:regmin} tends to produce
  solutions $x$ that are {\em sparse}, in the sense of having
  relatively few nonzero components. Larger values of $\tau$ tend to
  produce sparser solutions.  Compressed sensing is a particular area
  of  interest, in which the objective $f$ is typically a
  least-squares function $f(x) = (1/2) | Ax-b |^2$; see \cite{Can06a}
  for a survey.  Regularized least-squares problems (or
  equivalent constrained-optimization formulations) are also
  encountered in statistics; see for example the LASSO \cite{Tib96}
  and LARS \cite{EfronHJT04} procedures, and basis pursuit
  \cite{CheDS98}.

  A related application is regularized logistic regression, where
  again $|\cdot|_* = | \cdot|_1$, but $f$ is (the negative of) an a
  posteriori log likelihood function \cite{ShiWWLKK06}. Here, the
  components of $x$ are weights applied to the features in a data
  vector. We aim to identify those features (corresponding to the
  nonzero locations in $x$) that are most effective in predicting a
  binary outcome.

\end{Exa}

Another interesting class of regularized minimization problems arises
in {\em matrix completion}, where we seek an $m \times n$ matrix $X$
of smallest rank that is consistent with given knowledge of various
linear combinations of the elements of $X$; see
\cite{CanR08,RecFP07a,CaiCS08a}.  Much as the $\ell_1$ norm of a vector $x$
is used as a surrogate for cardinality of $x$ in the formulations of
Example~\ref{ex:l1reg}, the {\em nuclear norm} is used as a surrogate
for the rank of $X$ in formulations of the matrix completion
problem. The nuclear norm $|X|_*$ is defined as the sum of singular
values of $X$, and we have the following specialization of
(\ref{eq:regmin}):
\begin{equation} \label{eq:matcomp}
\min_{X \in \R^{m \times n}} \, \half | \cA (X)-b |^2 + \tau |X|_*,
\end{equation}
where $\cA$ denotes a linear operator from $\R^{m \times n}$ to
$\R^p$, and $b \in \R^p$ is the observation vector. Note that the
nuclear norm is a continuous and convex function of $X$. 

Finally, we mention image denoising and deblurring problems, which are
often posed in the form \eqnok{eq:regmin}, where $| \cdot|_*$ is a
total-variation regularizer \cite{ROF92} that induces ``natural''
qualities in the solution images. Specifically, the recovered images
contain large areas of near-constant color or shade, separated by
sharp edges.

For regularized minimization problems of the form (\ref{eq:regmin}),
the subproblem (\ref{pls}) has the form
\begin{equation}
  \label{eq:pls.regmin}
  \min_d \, f(x) + \langle \nabla f(x),d \rangle + \frac{\mu}{2} |d|^2 
+ \tau |x+d|_*.
\end{equation}
An equivalent formulation can be obtained by shifting the objective
and making the change of variable $z:=x+d$:
\begin{equation}
  \label{eq:pls.regmin.3}
  \min_z \, \frac{\mu}{2} |z  - y|^2 + \tau |z|_*, 
\qquad \mbox{\rm where} \quad 
y = x - \frac{1}{\mu} \nabla f(x).
\end{equation}
When the regularization function $|\cdot|_*$ is separable in the
components of $x$, as when $|\cdot|_*=|\cdot|_1$ or $|\cdot| = |
\cdot|_2^2$, this problem can be solved in $O(n)$ time.  (This fact is
key to the practical efficiency of methods based on these subproblems
in compressed sensing; see \cite{WriNF08a}.) For the case $| \cdot|_*
= | \cdot|_1$, if we set $\alpha = \tau/\mu$, the solution of
(\ref{eq:pls.regmin.3}) is
\begin{equation} \label{eq:shrink}
z_i = \begin{cases}
0            & \mbox{\rm ($|y_i| \le \alpha$)} \\
y_i - \alpha & \mbox{\rm ($y_i > \alpha$)}     \\
y_i + \alpha & \mbox{\rm ($y_i < -\alpha$)}.
\end{cases}
\end{equation}
This operation is known commonly as the ``shrink operator.''

For  matrix completion  (\ref{eq:matcomp}), the
formulation (\ref{eq:pls.regmin.3}) of the subproblem becomes
\begin{equation} \label{eq:matcomp.subprob}
\min_{Z \in \R^{m \times n}} \, \frac{\mu}{2} | Z-Y |_F^2 + \tau |Z|_*,
\end{equation}
where  $| \cdot|_F$ denotes the Frobenius norm of a matrix and 
\begin{equation} \label{defY}
Y = X - \frac{1}{\mu} \cA^* [ \cA(X)-b].
\end{equation}
It is known (see for example \cite{CaiCS08a}) that (\ref{eq:matcomp.subprob})
can be solved by using the singular-value decomposition of
$Y$. Writing $Y = U \Sigma V^T$, where $U$ and $V$ are orthogonal and
$\Sigma = \mbox{\rm diag} (\sigma_1,\sigma_2, \dotsc,
\sigma_{\min(m,n)})$, we have $Z = U \Sigma_{\tau/\mu} V^T$, where the
diagonals of $\Sigma_{\tau/\mu}$ are $\max(\sigma_i - \tau/\mu,0)$ for
$i=1,2,\dotsc, \min(m,n)$. In essence, we apply the shrink operator to
the singular values of $Y$, and reconstruct $Z$ by using the
orthogonal matrices $U$ and $V$ from the decomposition of $Y$.


\subsection{Nonconvex Problems}

Each of the examples above involves a convex outer function $h$.  In
principle, however, the techniques we develop here also apply to a
variety of nonconvex functions. This section discusses some
applications in which $h$ is nonconvex.

\begin{Exa}[problems involving quadratics] 
  Given a general quadratic function $f: \R^p \to \R$ (possibly
  nonconvex) and a smooth function $c_1 : \R^n \to \R^p$, consider the
  problem $\min_x\, f\big(c_1(x)\big)$.  This problem trivially fits into the
  framework \eqnok{hc}, and the function $f$, being $\cC^2$, is
  everywhere prox-regular.  The subproblems \eqnok{gen-prox}, for
  sufficiently large values of the parameter $\mu$, simply amount to
  solving a linear system.

  More generally, given another general quadratic function $g : \R^q
  \to \R$, and another smooth function $c_2 : \R^n \to \R^q$, consider
  the problem
\[
\min_{x \in \R^n} \, f\big(c_1(x)\big) \quad \mbox{\rm subject to} \quad 
g\big(c_2(x)\big) \le 0.
\]
We can express this problem in the form \eqnok{hc} by defining the
smooth vector function $c = (c_1,c_2)$ and defining an extended-valued
nonconvex function
\[
h(c_1,c_2) = 
\left\{
\begin{array}{ll}
f(c_1)  & (g(c_2) \le 0) \\
+\infty & (g(c_2) > 0).
\end{array}
\right.
\] 
The epigraph of $h$ is
\[
\big\{ (c_1,c_2,t) : g(c_2) \le 0,~ t \ge f(c_1) \big\},
\]
a set defined by two smooth inequality constraints: hence $h$ is
prox-regular at any point $(c_1,c_2)$ satisfying $g(c_2) \le 0$ and
$\nabla g(c_2) \ne 0$.  The resulting subproblems \eqnok{gen-prox} are
all in the form of the standard trust-region subproblem, and hence
relatively straightforward to solve quickly.

As one more example of this type, we consider the case in which the
outer function $h$ is defined as the maximum of a finite collection of
quadratic functions (possibly nonconvex): $h(x) = \max \{f_i(x) :
i=1,2,\dotsc,k \}$.  The subproblems \eqnok{gen-prox} are as follows:
\[
\min \Big\{ t : t \ge f_i\big(\Phi(d)\big) + \frac{\mu}{2}|d|^2,~ d \in \R^m,~ t \in \R,~ i=1,2,\dotsc, k \Big\}.
\]
where the map $\Phi$ is affine.  For sufficiently large values of the
parameter $\mu$, this is a quadratically-constrained convex quadratic
program, which can in principle be solved efficiently by an interior point
method.
\end{Exa}

To conclude, we consider three more nonconvex examples.  The first,
due to Mangasarian~\cite{Man99a}, is used by Jokar and
Pfetsch~\cite{JokP07a} to find sparse solutions of underdetermined
linear equations. The formulation of \cite{JokP07a} can be stated in
the form (\ref{eq:regmin}) where the regularization function
$|\cdot|_*$ has the form
\[
|x|_* = \sum_{i=1}^n (1-e^{-\alpha |x_i|})
\] 
for some parameter $\alpha>0$. It is easy to see that this function is
nonconvex but prox-regular, and nonsmooth only at $x_i=0$.

Fan and Li~\cite{SCAD} propose the smoothly clipped absolute deviation
(SCAD) regularizer. This problem has the form (\ref{eq:regmin}), and
behaves like the $\ell_1$ norm near the origin, transitioning (via a
concave quadratic) to a constant for large loss values. Specifically,
we have $|\cdot|_* = \sum_{i=1}^n \phi(x_i)$, where
\[
\phi(x_i) = \begin{cases}
\lambda |x_i|                                               & \mbox{\rm ($|x_i| \le \lambda$)}   \\
-(|x_i|^2 - 2a\lambda |x_i| + \lambda^2) / \big(2(a-1)\big) & 
\mbox{\rm ($\lambda < |x_i| \le a\lambda$)}                                                      \\
(a+1)\lambda^2/2                                            & \mbox{\rm ($|x_i| > a\lambda$)}.
\end{cases}
\]
Here $\lambda>0$ and $a>1$ are tuning parameters. The minimum concave
penalty (MCP) regularizer of Zhang~\cite{MCP} has a similar form, with 
\begin{equation} \label{eq:mcp}
\phi(x_i) = \begin{cases}
\lambda |x_i| - |x_i|^2/(2a)                                & \mbox{\rm ($|x_i| \le a \lambda$)} \\
a \lambda^2/2                                               & \mbox{\rm ($|x_i|>a \lambda$)}.
\end{cases}
\end{equation}
SCAD and MCP have been shown to avoid the bias property associated
with the $\ell_1$ penalty function, in which nonzero values of $x$ are
skewed toward zero.

\section{Related Work} \label{sec:related}

We discuss here some connections of our approach with existing
literature.

\paragraph{Convex $h$} 
Burke~\cite{Bur85a} uses a similar composite function to the one
analyzed here, and a subproblem like \eqnok{gen-prox} to calculate the
search direction $d$. In contrast to our approach, the analysis in
\cite{Bur85a} is restricted to finite convex $h$, and the algorithm
uses a backtracking line search to ensure descent in the composite
objective at each iteration. In place of the prox term $|d|^2/2$ of
\eqnok{gen-prox}, Burke uses ``casting functions'' that serve a
similar purpose of ensuring well posedness of the subproblem. 
Sagastiz\'abal~\cite{Sag13} considers the problem \eqref{hc} in which
$h$ is finite, convex, and positively homogeneous. Her algorithm is
based on a subproblem like \eqref{pls}, differing mainly in that $h$
is replaced by a lower-bounding bundle
approximation. Lan~\cite[Section~4]{Lan15} discusses \eqref{hc} in
which $h$ and the components of $c(x)$ are all Lipschitz continuous
and convex. Under certain assumptions on the smoothness of $c$, a
subproblem is defined that makes use of an approximation like the
$h\big(c(x)+\nabla c(x)d\big)$ of \eqref{pls}, but taking the maximum
of such approximations over all previous iterates, not just the one
from the latest iterate. Global convergence is proved
\cite[Proposition~1]{Lan15} at rates that are optimal among
first-order schemes.

\paragraph{Polyhedral $h$}
Various approaches have been proposed for the case of $h$ finite and
polyhedral.  One work closely related to ours is by Fletcher and Sainz
de la Maza~\cite{FleS89}, who discuss an algorithm for minimization of
the $\ell_1$ penalty function \eqnok{l1pf} for the nonlinear
optimization problem \eqnok{nlp}. At each iteration, their method
solves a linearized trust-region problem that can be expressed in our
general notation as follows:
\begin{equation} \label{fsm.lin}
\min_{d} \, h\big(c(x)+ \nabla c(x) d\big)  \;\; \makebox{\rm subject to} \;\;
| d | \le \rho,
\end{equation}
where $\rho$ is some trust-region radius. Note that this subproblem is
closely related to our linearized subproblem \eqnok{pls} when the
Euclidean norm is used to define the trust region. However, the
$\ell_{\infty}$ norm is preferred in \cite{FleS89}, as it allows the
subproblem \eqnok{fsm.lin} to be expressed as a linear program. The
algorithm in \cite{FleS89} uses the solution of \eqnok{fsm.lin} to
estimate the active constraint manifold, then computes a step that
minimizes a model of the Lagrangian function for \eqnok{nlp} while
fixing the identified constraints as equalities.  An active-constraint
identification result is proved (\cite[Theorem~2.3]{FleS89}); this
result is related to our Theorems~\ref{th:id1} and \ref{th:id.conv}
below.


Byrd et al.~\cite{ByrGNW05} describe a successive linear-quadratic
programming method, based on \cite{FleS89}, which starts with solution
of the linear program \eqnok{fsm.lin} (with $\ell_{\infty}$ trust
region) and uses it to define an approximate Cauchy point, then
approximately solves an equality-constrained quadratic program (EQP)
over a different trust region to enhance the step. This algorithm is
implemented in the KNITRO package for nonlinear optimization as the
KNITRO-ACTIVE option.

Friedlander et al.~\cite{FriGLM07} solve a problem of the form
(\ref{pls}) for the case of nonlinear programming, where $h$ is the
sum of the objective function $f$ and the indicator function for the
equalities and the inequalities defining the feasible region. The
resulting step can be enhanced by solving an EQP.

Other related literature on composite nonsmooth optimization problems
with general finite polyhedral convex functions (Section~\ref{poly})
includes the papers of Yuan~\cite{Yua85,Yua85b} and
Wright~\cite{Wri90b}. The approaches in \cite{Yua85b,Wri90b} solve a
linearized subproblem like \eqnok{fsm.lin}, from which an analog of
the ``Cauchy point'' for trust-region methods in smooth unconstrained
optimization can be calculated. This calculation involves a line
search along a piecewise quadratic function and is therefore more
complicated than the calculation in \cite{FleS89}, but serves a similar
purpose, namely as the basis of an acceptability test for a step
obtained from a higher-order model.

\paragraph{Regularized Form (\protect\ref{eq:regmin})}
For general outer functions $h$, the theory is more complex.  An early
approach to regularized minimization problems of the form
\eqref{eq:regmin} for a lower semicontinuous convex function
$|\cdot|_*$ is due to Fukushima and Mine~\cite{Fuk81}. They calculate
a trial step at each iteration by solving the linearized problem
\eqref{eq:pls.regmin}.

Subproblems of the form \eqref{eq:pls.regmin} were used in compressed
sensing algorithms by Wright, Nowak, and Figueiredo~\cite{WriNF08a}
and Hale, Yin, and Zhang~\cite{HalYZ07b}, in conjunction with an
adaptive strategy for choosing $\mu$. (Indeed, this application
provided the  motivation for the current study.) 

Combettes and Wajs~\cite{ComW05a} study formulations similar to
\eqnok{eq:regmin} and algorithms that use subproblems like
\eqref{eq:pls.regmin}. Apart from assuming convexity, their setting is
more general. Convergence is proved for algorithms that use values of
$\mu$ in \eqref{eq:pls.regmin} that are large enough to guarantee
descent in the objective at every iteration, regardless of iterate
$x$. This assumption contrasts with the adaptive approach used in
\cite{WriNF08a} and in Section~\ref{sec:proxdesc} below.

\paragraph{$c(x)=x$: Proximal-Point Methods}
The case when the map $c$ is simply the identity has a long history.
The iteration $x_{k+1} = x_k + d_k$, where $d_k$ minimizes the
function $d \mapsto h(x_k+d) + \frac{\mu}{2}|d|^2$, is the well-known
{\em proximal point method}.  For lower semicontinuous convex
functions $h$, convergence was proved by Martinet~\cite{Mar70} and
generalized by Rockafellar~\cite{Roc76}.  For nonconvex $h$, a good
survey up to 1998 is by Kaplan and Tichatschke~\cite{Kap98}.
Pennanen~\cite{Pen02} took an important step forward, showing in
particular that if the graph of the subdifferential $\partial h$
agrees locally with the graph of the inverse of a Lipschitz function
(a condition verifiable using second-order properties including
prox-regularity---see Levy~\cite[Cor.~3.2]{Lev01}), then the proximal
point method converges linearly if started nearby and with
regularization parameter $\mu$ bounded away from zero.  This result
was foreshadowed in much earlier work of Spingarn~\cite{Spi81}, who
gave conditions guaranteeing local linear convergence of the proximal
point method for a function $h$ that is the sum of lower
semicontinuous convex function and a $\cC^2$ function, conditions
which furthermore hold ``generically'' under perturbation by a linear
function. Inexact variants of Pennanen's approach are discussed by
Iusem, Pennanen, and Svaiter~\cite{Ius03} and Combettes and
Pennanen~\cite{Com04}.  In this current work, we make no attempt to
build on this more sophisticated theory, preferring a more direct and
self-contained approach.

\paragraph{Manifold Identification}
The issue of identification of the face of a constraint set on which
the solution of a constrained optimization problem lies has been the
focus of numerous works. For the problem $\min_{x \in X} \, f(x)$, for
a closed set $X \subset \R^n$, some papers show that the projection of
the point $x-(1/\mu) \nabla f(x)$ onto the feasible set $X$ (for some
fixed $\mu>0$) lies on the same face as the solution $\bar{x}$, under
certain nondegeneracy assumptions on the problem and geometric
assumptions on $X$.  Identification of so-called quasi-polyhedral
faces of convex $X$ was described by Burke and Mor\'e~\cite{BurM88}.
An extension to the nonconvex case is provided by Burke~\cite{Bur90},
who considers algorithms that work with linearizations of the
constraints describing $X$. Wright~\cite{Wri93a} considers surfaces of
a convex set $X$ that can be parametrized by a smooth algebraic
mapping, and shows how algorithms of gradient projection type can
identify such surfaces once the iterates are sufficiently close to a
solution.  Lewis~\cite{Lew03a} and Hare and Lewis~\cite{HarL04} extend
these identification results to the nonconvex, nonsmooth case by using
concepts from nonsmooth analysis, including partly smooth functions
and prox-regularity. In their setting, the concept of an identifiable
face of a feasible set is extended to a certain type of manifold with
respect to which the function $h$ in \eqnok{hc} is partly smooth (see
Definition~\ref{def:ps} above). 

A rich class of convex composite functions with partly smooth
structure was discussed in detail by Bonnans and Shapiro~\cite{BonS00}
and Shapiro~\cite{Sha03c}.  For a detailed discussion of the
relationship between that class and partial smoothness, see
\cite{BolDL11a}.

\paragraph{Alternative Subproblems}
Another line of relevant work is associated with the $\cV \cU$ theory
introduced by Lemar\'echal, Oustry, and Sagastiz\'abal~\cite{LemOS00}
and subsequently elaborated by these and other authors.  The focus is
on minimizing convex functions $f(x)$ that, again, are partly smooth
--- smooth (``U-shaped'') along a certain manifold through the
solution $\bar{x}$, but nonsmooth (``V-shaped'') in the transverse
directions.  Mifflin and Sagastiz\'abal~\cite{SagM02} discuss the
``fast track,'' which is essentially the manifold containing the
solution $\bar{x}$ along which the objective is smooth. Similarly to
\cite{FleS89}, they are interested in algorithms that identify the
fast track and then take a minimization step for a certain Lagrangian
function along this track. It is proved in \cite[Theorem~5.2]{SagM02}
that under certain assumptions, when $x$ is near $\bar{x}$, the
proximal point $x+d$ obtained by solving the problem
\begin{equation} \label{prox.gen} 
\min_d \, f(x+d) + \frac{\mu}{2} |d|^2 
\end{equation}
lies on the fast track. This identification result is similar to the
one we prove in Section~\ref{sec:id}, but the calculation of $d$ is
different.
In our case of $f=h \circ c$, \eqnok{prox.gen} becomes
\begin{equation} \label{prox} 
\min_d \, h\big(c(x+d)\big) + \frac{\mu}{2} |d|^2.
\end{equation}
In many applications of interest, $c$ is nonlinear, so the subproblem
\eqnok{prox} is generally harder to solve for the step $d$ than our
subproblem \eqnok{pls}.

Mifflin and Sagastiz\'abal~\cite{MifS05a} describe an algorithm in
which an approximate solution of \eqnok{prox.gen} is obtained, again
for the case of a convex objective, by making use of a piecewise
linear underapproximation to their objective $f$, usually constructed
from a bundle of subgradients gathered at earlier iterations.
Approximations to the manifold of smoothness for $f$ are constructed,
and a Newton-like step for the Lagrangian is taken along this
manifold.  Daniilidis, Hare, and Malick~\cite{DanHM06} use the
terminology ``predictor-corrector'' to describe algorithms of this
type.
Miller and Malick~\cite{MilM05} show how
algorithms of this type are related to Newton-like methods that have
been proposed earlier in various contexts.

Various of the algorithms discussed above make use of curvature
information for the objective on the active manifold to accelerate
local convergence. The algorithmic framework that we describe in
Section~\ref{sec:proxdesc} can be modified to incorporate similar
techniques, while retaining its global convergence and manifold
identification properties. Algorithms with this flavor have been
described in \cite{ShiWWLKK06} for the case of $\ell_1$-regularized
logistic regression, and \cite{WenYZG08} for $\ell_1$-regularized
least squares.

\section{Properties of the Proximal Linearized Subproblem}
\label{sec:sized}

We show in this section that when $h$ is prox-regular at $\bar{c}$,
under a mild additional assumption, the subproblem (\ref{pls})
has a local solution $d$ with norm $O(| x-\bar{x} |)$, when the
parameter $\mu$ is sufficiently large. When $h$ is convex, this
solution is the unique global solution of the subproblem. We show too
that a point $\xnew$ near $x+d$ can be found such that the objective
value $h\big(c(\xnew)\big)$ is close to the prediction of the model
function $h(c(x)+\nabla c(x) d)$ from \eqnok{pls}. Further, we
describe conditions under which the subproblem correctly identifies
the manifold ${\cal M}$ with respect to which $h$ is partly smooth at
the solution of (\ref{hc}).

\subsection{Lipschitz Properties}
\label{preliminaries}

We start with technical preliminaries.  Allowing non-Lipschitz or
extended-valued outer functions $h$ in our problem (\ref{hc}) is
conceptually appealing, since it allows us to model constraints that
must be enforced.  However, this flexibility presents certain
technical challenges, which we now address.  We begin with a simple
example, to illustrate some of the difficulties.

\begin{Exa}
Define a $\cC^2$ function $c:\R \to \R^2$ by $c(x) = (x,x^2)$, and a lower semicontinuous convex function $h:\R^2 \to \bar\R$ by
\[
h(y,z) = 
\left\{
\begin{array}{ll}
y       & (z \ge 2y^2) \\
+\infty & (z <   2y^2).
\end{array}
\right.
\]
The composite function $h \circ c$ is simply $\delta_{\{0\}}$, the
indicator function of $\{0\}$.  This function has a global minimum
value zero, attained uniquely by $\bar x = 0$.

At any point $x \in \R$, the derivative map $\nabla c(x): \R \to \R^2$
is given by $\nabla c(x) d = (d,2xd)$ for $d \in \R$.  Then, for all
nonzero $x$, it is easy to check that
\[
h\big( c(x) + \nabla c(x) d \big) = + \infty ~~\mbox{for all}~d \in \R,
\]
so the corresponding proximal linearized subproblem  (\ref{pls})
has no feasible solutions: its
objective value is identically $+\infty$.

The adjoint map $\nabla c(0)^* : \R^2 \to \R$ is given by $\nabla c(0)^*v = v_1$ for $v \in \R^2$, and 
\[
\partial h(0,0) ~=~ \big\{v \in \R^2 : v_1 =1,~v_2 \le 0\big\}.
\]
Hence the criticality condition \eqnok{hc_necc} has no solution $\bar v \in \R^2$.
\end{Exa}

This example illustrates two fundamental difficulties.  The first is
theoretical: the basic criticality condition \eqnok{hc_necc} may be
unsolvable, essentially because the chain rule fails.  The second is
computational: if, implicit in the function $h$, are constraints on
acceptable values for $c(x)$, then curvature in these constraints can
cause infeasibility in linearizations.  As we see below, resolving
both difficulties requires a kind of ``transversality'' condition
common in variational analysis.

In this section we make use of the {\em normal cone} to a set $S$ at a
point $s \in S$, denoted by $N_S(s)$, defined in the Appendix.  When $S$ is convex, it coincides exactly
with the classic normal cone from convex analysis, while for smooth
manifolds it coincides with the classical normal space.

The transversality condition we need involves the ``horizon
subdifferential'' of the function $h : \R^m \to \bar\R$ at the point
$\bar c \in \R^m$, denoted $\partial^{\infty} h(\bar c)$.  This
object, which recurs throughout our analysis, consists of a set of
``horizon subgradients'', capturing information about directions in
which $h$ grows faster than linearly near $\bar c$.  (See
the Appendix for a formal definition.)   This idea simplifies in
important special cases.  If $h$ is convex, finite, and lower
semicontinuous at $\bar c$, we have the following relationship between
the subdifferential and the classical normal cone to the
domain (see \cite[Proposition~8.12]{Roc98}):
$
\partial ^{\infty} h(\bar c) = N_{\mbox{\scriptsize dom}\,h}(\bar c). 
$
We have further that
$\partial^{\infty} h(\bar c) = \{0\}$ if $h$ is locally Lipschitz around $\bar c$.

This condition holds in particular for a convex function $h$ that is
continuous at $\bar c$.

We seek conditions guaranteeing a reasonable step in the proximal
linearized subproblem \eqnok{pls}.  Our key tool is the following
technical result.

\begin{theorem} \label{th:pert}
Consider a lower semicontinuous function $h \colon \R^m \rightarrow \bar \R$,
a point $\bar z \in \R^m$ where $h(\bar z)$ is finite, and a linear
map $\bar G \colon \R^n \rightarrow \R^m$ satisfying
\[
\partial ^{\infty} h(\bar z) \cap \mbox{\rm Null}(\bar G^*) = \{0\}.
\]
Then there exists a constant $\gamma > 0$ such that, for all vectors
$z \in \R^m$ and  linear maps $G : \R^n \to \R^m$ with $(z,G)$ near $(\bar z,\bar G)$,
there exists a vector $w \in \R^n$ satisfying
\[
|w| \le \gamma |z-\bar{z}| ~~\mbox{and}~~ h(z+Gw) \le h(\bar z) + \gamma |z-\bar{z}|.
\]
\end{theorem}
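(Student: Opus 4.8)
The statement is a quantitative ``calmness''-type result: under the transversality condition $\partial^\infty h(\bar z) \cap \mathrm{Null}(\bar G^*) = \{0\}$, the set-valued object $(z,G) \mapsto \{w : h(z+Gw) \text{ is controlled}\}$ behaves Lipschitzly near $(\bar z, \bar G)$. The natural route is to recast the conclusion as a metric regularity / Lipschitzian behavior statement for an auxiliary set-valued map and invoke a known coderivative (Mordukhovich) criterion. Concretely, define $S : \R^n \times \R^m \times (\text{linear maps}) \rightrightarrows \R$ by something like $S(w,z,G) = $ epigraphical data of $h(z+Gw)$, or work directly with the function $\psi(w,z,G) := h(z+Gw)$ on $\R^n \times \R^m \times L(\R^n,\R^m)$. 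The transversality hypothesis is exactly what makes $\psi$ well-behaved at $(0,\bar z,\bar G)$: a standard chain-rule / sum-rule calculation shows $\partial^\infty \psi(0,\bar z,\bar G)$ is trivial in the $w$ and $z$ directions precisely when $\partial^\infty h(\bar z) \cap \mathrm{Null}(\bar G^*) = \{0\}$, since $\nabla_w(z+Gw)^* = G^*$ at $w=0$. From the triviality of the horizon subdifferential one gets, via \cite[Theorem 9.13 or 1.17]{Roc98} (subdifferential/horizon calculus implying local Lipschitz or at least calmness-type bounds), the desired linear estimate.

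**Key steps, in order.** First, I would reduce to the case $\bar z$ arbitrary but fixed and set $w=0$ as the ``base point'': at $(0,\bar z,\bar G)$ we have $z+Gw = \bar z$ and $h$ finite there. Second, compute the horizon subdifferential of the composite $(w,z) \mapsto h(z+Gw)$ using the chain rule for horizon subgradients under the constraint qualification — here the qualification condition that must be checked is exactly $\partial^\infty h(\bar z) \cap \mathrm{Null}(G^*) = \{0\}$ for $G$ near $\bar G$, and this is \emph{stable} under perturbation of $G$ because $\mathrm{Null}(G^*)$ varies upper-semicontinuously and the cone $\partial^\infty h$ is fixed (a short compactness/continuity argument; this is where I'd be careful). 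Third, conclude that $z \mapsto h(z+Gw)$ — or more robustly the value function $m(z,G) := \inf_{|w|\le r} h(z+Gw)$ over a small ball — is locally Lipschitz, or at least calm, at $\bar z$ uniformly for $G$ near $\bar G$, with a uniform constant $\gamma_0$. Fourth, extract the witness $w$: calmness of the value function gives $m(z,G) \le h(\bar z) + \gamma_0|z-\bar z|$, and a near-minimizer $w$ (using that the infimum over a compact ball is attained, $h$ being lsc) then automatically satisfies $|w| \le r$; one still needs $|w| \le \gamma|z-\bar z|$, which requires a bit more — one should instead exhibit $w$ directly as coming from the metric-regularity estimate, i.e. $w$ is the displacement needed to bring $z$ back ``onto'' $\mathrm{dom}\,h$-like structure, whose size is $O(\mathrm{dist})=O(|z-\bar z|)$. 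Finally, absorb constants: take $\gamma := \max(\gamma_{\text{size}}, \gamma_0(1+\gamma_{\text{size}}\|\bar G\| + 1))$ or similar, shrinking the neighborhood of $(\bar z, \bar G)$ as needed.

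**The main obstacle.** The delicate point is getting \emph{both} bounds — $|w| \le \gamma|z-\bar z|$ \emph{and} $h(z+Gw) \le h(\bar z)+\gamma|z-\bar z|$ — from a single mechanism, uniformly in the perturbed map $G$. A clean way is to apply metric regularity of the mapping $w \mapsto z + Gw$ relative to $\mathrm{epi}\,h$ (or to a suitable truncation), viewing the transversality condition as a surjectivity/regularity condition for the ``partial linearization'' $\bar G$ along $\partial^\infty h(\bar z)$; the Mordukhovich criterion then yields a neighborhood and a uniform modulus $\gamma$ on which $\mathrm{dist}\bigl((z,h(\bar z)+\gamma|z-\bar z|), \{(z',r'): r' \ge h(z')\}\bigr)$-type quantities are controlled, and the point $z+Gw$ achieving this has $|w| = O(|z-\bar z|)$ by the regularity modulus. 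The perturbation-stability of the hypothesis (moving $\bar G$ to $G$) is then free, because metric regularity is an open condition. I expect the bookkeeping of the two constants and the choice of the truncation radius $r$ to be the only genuinely fiddly part; the conceptual content is entirely in the transversality $\Rightarrow$ metric-regularity step, which is standard variational analysis (\cite{Roc98}, \cite{Mor06}).
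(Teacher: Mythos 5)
Your final mechanism---lifting to the epigraph $\mbox{epi}\,h$, reading the transversality hypothesis as the coderivative (Mordukhovich) criterion for metric regularity of $w \mapsto \bar G w$ relative to $\mbox{epi}\,h$, and then invoking stability of metric regularity under small linear perturbations to get a modulus $\gamma$ uniform in $(z,G)$ near $(\bar z,\bar G)$---is exactly the paper's proof, which routes through Corollary~\ref{co:pert} and the uniform perturbation result of Theorem~\ref{metric}. The earlier detours (horizon chain rule for $\psi$, the value function $m(z,G)$) are ones you correctly abandon, and your appeal to perturbation-stability being ``free'' is legitimate but is precisely the quantitative content the paper isolates in Theorem~\ref{metric}.
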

Notice that this result is trivial if $h$ is locally Lipschitz (or in
particular continuous and convex) around $\bar z$, since we can simply
choose $w=0$.  The non-Lipschitz case is harder; our proof appears
below following the introduction of a variety of ideas from
variational analysis whose use is confined to this subsection.  We
refer the reader to Rockafellar and Wets~\cite{Roc98} or
Mordukhovich~\cite{Mor06} for further details. First, we need a
``metric regularity'' result, which is proved below by means of a
result from Dontchev, Lewis, and Rockafellar~\cite{Don03}. An
alternative proof, which sets the result in a broader context, appears
in the Appendix.


\begin{theorem}[uniform metric regularity under perturbation] 
\label{metric}
\hfill \mbox{} \mbox{Suppose that the} closed set-valued mapping $F
\colon \R^p \tto \R^q$ is metrically regular at a point $\bar u \in
\R^p$ for a point $\bar v \in F(\bar u)$: in other words, there exist
positive constants $\kappa$ and $a$ such that all points $u \in
B_a(\bar u)$ and $v \in B_a(\bar v)$ satisfy
\begin{equation} \label{unperturbed}
\dist\!\big(u,F^{-1}(v)\big) \le \kappa \, \dist\!\big(v,F(u)\big).
\end{equation}
Then there exist constants $\delta,\gamma > 0$ such that all linear
maps $H \colon \R^p \rightarrow \R^q$ with $\|H\| < \delta$ and all
points $u \in B_{\delta}(\bar u)$ and $v \in B_{\delta}(\bar v)$
satisfy
\begin{equation} \label{perturbed}
\dist\!\big(u,(F+H)^{-1}(v)\big) ~\le~ \gamma \, \dist\!\big(v,(F+H)(u)\big).  
\end{equation}
\end{theorem}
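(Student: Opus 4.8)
The plan is to establish the perturbed estimate \eqnok{perturbed} directly from the unperturbed one \eqnok{unperturbed} by an iteration (a contraction-mapping / Lyusternik--Graves style argument) that absorbs the small linear perturbation $H$ into the given regularity of $F$. First I would fix $u \in B_\delta(\bar u)$ and $v \in B_\delta(\bar v)$ with $\delta$ to be chosen small, and set $r := \dist\!\big(v,(F+H)(u)\big)$; if $r=0$ there is nothing to prove, so assume $r>0$. The target is to produce a point $\hat u \in (F+H)^{-1}(v)$ with $|u-\hat u| \le \gamma r$. I would build $\hat u$ as the limit of a sequence $u_0 = u, u_1, u_2, \dots$, where at each step $u_{k+1}$ is obtained by applying the unperturbed estimate \eqnok{unperturbed} to the point $v$ and the shifted map value: concretely, choose $v_k$ close to $v - H u_k$ realizing (nearly) the distance $\dist\!\big(v - Hu_k, F(u_k)\big)$, and let $u_{k+1} \in F^{-1}(v_k)$ with $|u_{k+1}-u_k| \le \kappa\,\dist\!\big(v - Hu_k, F(u_k)\big) + \varepsilon_k$, the $\varepsilon_k$ being a summable slack to handle the infimum. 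The key algebraic observation is that $v_k \in F(u_{k+1})$ means $v_k + Hu_{k+1} \in (F+H)(u_{k+1})$, so
\[
\dist\!\big(v - Hu_{k+1}, F(u_{k+1})\big) \le |v - Hu_{k+1} - v_k| = |Hu_{k+1} - Hu_k| \le \|H\|\,|u_{k+1}-u_k|,
\]
which, combined with the next application of \eqnok{unperturbed}, gives $|u_{k+2}-u_{k+1}| \le \kappa\|H\|\,|u_{k+1}-u_k| + \varepsilon_{k+1}$. Choosing $\delta$ small enough that $\kappa\|H\| \le \kappa\delta < 1$ makes the displacements decay geometrically, so $\sum_k |u_{k+1}-u_k|$ converges; the limit $\hat u$ exists, lies in $B_a(\bar u)$ provided $\delta$ was chosen small relative to $a$ (this is where the geometric sum $\frac{1}{1-\kappa\delta}$ must be controlled), and by closedness of $F$ we pass to the limit to get $v \in F(\hat u) + H\hat u$, i.e. $\hat u \in (F+H)^{-1}(v)$. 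Summing the geometric series yields $|u - \hat u| \le \frac{\kappa}{1-\kappa\delta}\, r + (\text{slack})$, so $\gamma := 2\kappa/(1-\kappa\delta)$ (say) works after sending the slack to zero.

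Two bookkeeping points need care. The first is ensuring that all the points $u_k$ stay inside $B_a(\bar u)$ and all the values $v_k$ stay inside $B_a(\bar v)$ so that \eqnok{unperturbed} may legitimately be invoked at every step: this forces $\delta$ to be chosen so that $\delta + \frac{\kappa\delta}{1-\kappa\delta}\cdot(\text{const}) < a$, and similarly for the $v$-side one uses $|v_k - \bar v| \le |v - \bar v| + |v - Hu_k - \text{(nearby point)}| $ together with boundedness of $\|H\|\,|u_k - \bar u|$. The second is the first step of the iteration, where one applies \eqnok{unperturbed} to $v - Hu_0 = v - Hu$ and $F(u_0)$: here $\dist(v-Hu, F(u)) \le \dist(v,(F+H)(u)) + \|H\||u - \bar u| + \dots$ is not quite what one wants, so it is cleaner to start the recursion by a slightly different initial application, or simply to observe $\dist\!\big(v-Hu,F(u)\big) = \dist\!\big(v,(F+H)(u)\big) = r$ because $(F+H)(u) = F(u) + Hu$ shifts distances rigidly. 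That identity, in fact, is what makes the whole scheme go through cleanly and should be stated up front.

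The main obstacle I anticipate is purely quantitative: choosing the single constant $\delta$ so that simultaneously (a) $\kappa\delta < 1$ for the contraction, (b) every iterate $u_k$ remains in $B_a(\bar u)$, and (c) every value $v_k$ remains in $B_a(\bar v)$, while keeping the bookkeeping of the summable slacks $\varepsilon_k$ under control. None of this is deep, but it is the kind of estimate that is easy to get slightly wrong; the cleanest route is to first prove the statement with $v$ fixed equal to $\bar v$ — or even to reduce to the graphical-derivative / coderivative characterization of metric regularity and invoke its known stability under small Lipschitz perturbations (Dontchev--Lewis--Rockafellar~\cite{Don03}, whose Theorem~3.3 we are told this specializes) — and only then restore the general $v$ by the rigid-shift identity above. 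The alternative ``broader context'' proof the authors mention presumably does exactly this: it identifies $(F+H)^{-1}$ with a perturbation of $F^{-1}$ and applies a general Lyusternik--Graves theorem, trading the explicit iteration for a citation.
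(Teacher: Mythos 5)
Your approach is sound, but it is genuinely different from what the paper does: the paper gives no self-contained argument at all. Its first ``proof'' simply fixes explicit constants $\lambda,\alpha,\delta$ and asserts that the proof of Theorem~3.3 of Dontchev--Lewis--Rockafellar \cite{Don03} then yields \eqnok{perturbed} with $\gamma=\kappa/(1-\kappa\lambda)$; its second reduces the statement to the Dmitruk--Kruger theorem \cite{DmiK08} on invariance of covering rates under partial first-order approximations. What you propose is to reconstruct the Lyusternik--Graves iteration that underlies both citations, and that is exactly the right engine: the rigid-shift identity $\dist\!\big(v,(F+H)(u)\big)=\dist\!\big(v-Hu,F(u)\big)$, the contraction ratio $\kappa\|H\|<1$, and the geometric summation giving $\gamma=\kappa/(1-\kappa\|H\|)$ (matching the paper's constant with $\lambda$ playing the role of the bound on $\|H\|$) are all correct. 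What your route buys is a self-contained, elementary proof; what the paper's route buys is brevity and, in the second variant, a conceptual placement of the result as a special case of stability of covering rates.

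One step of your construction is stated inconsistently and must be repaired. You say to choose $v_k$ ``realizing the distance $\dist\!\big(v-Hu_k,F(u_k)\big)$'' and then take $u_{k+1}\in F^{-1}(v_k)$: but a point of $F(u_k)$ nearest to $v-Hu_k$ already has $u_k$ in its preimage, so this prescription produces no motion. The correct step --- which is the one your subsequent displayed inequality silently assumes --- is to apply \eqnok{unperturbed} at the pair $\big(u_k,\,v-Hu_k\big)$, i.e.\ to use $\dist\!\big(u_k,F^{-1}(v-Hu_k)\big)\le\kappa\,\dist\!\big(v-Hu_k,F(u_k)\big)$ to select $u_{k+1}\in F^{-1}(v-Hu_k)$ with $|u_{k+1}-u_k|\le\kappa\,\dist\!\big(v-Hu_k,F(u_k)\big)+\varepsilon_k$; then $v-Hu_k\in F(u_{k+1})$ gives $\dist\!\big(v-Hu_{k+1},F(u_{k+1})\big)\le\|H\|\,|u_{k+1}-u_k|$ and the recursion closes. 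Two further bookkeeping items you should make explicit: the unperturbed estimate is only available for arguments in $B_a(\bar v)$, so you need $|v-Hu_k-\bar v|\le|v-\bar v|+\|H\|\,|u_k|<a$, which requires $\delta$ small relative to $a/(1+|\bar u|)$, not merely to $a$; and the whole iteration only controls $\dist\!\big(u,(F+H)^{-1}(v)\big)$ when $r=\dist\!\big(v,(F+H)(u)\big)$ is small enough that the first displacement $\kappa r$ keeps $u_1$ in $B_a(\bar u)$, so the case of large (or infinite) $r$ must be disposed of separately, e.g.\ by running the same iteration from $\bar u$ to show $(F+H)^{-1}(v)$ meets a fixed ball and enlarging $\gamma$. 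Neither issue is deep, and with those repairs your argument is a complete proof.
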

\begin{proof}
We follow the notation of the proof of \cite[Theorem~3.3]{Don03}.  Fix
any constants
\[
\lambda \in (0 , \kappa^{-1}), \qquad
\alpha  \in \left( 0 , \frac{a}{4}(1- \kappa \lambda) \min\{1,\kappa\} \right), \qquad
\delta  \in \left( 0 , \min \left\{ \frac{\alpha}{4} , \frac{\alpha}{4\kappa} , \lambda \right\} \right).
\]
Then the proof shows inequality (\ref{perturbed}), if we define
$\gamma = {\kappa}/{(1-\kappa \lambda)}$.
\end{proof}

Using this result, and given a closed set $S$ containing $0$, we
identify a condition under which any vector $v$ can be projected to
$S$ along the range space of a given matrix, with the difference
between $v$ and its projection being bounded in terms of $|v|$.  We prove this result in the Appendix.
\begin{corollary} \label{co:pert}
Consider a closed set $S \subset \R^q$ with $0 \in S$, and a linear map
$\bar A \colon \R^p \rightarrow \R^q$ satisfying
\[
N_S(0) \cap \mbox{\rm Null}(\bar A^*) = \{0\}.
\]
Then there exists a constant $\gamma > 0$ such that, for all vectors
$v \in \R^q$ and linear maps $A : \R^p \to \R^q$ with $(v,A)$ near
$(0,\bar A)$, the inclusion
\[
v+Au \in S
\]
has a solution $u \in \R^p$ satisfying $|u| \le \gamma |v|$.
\end{corollary}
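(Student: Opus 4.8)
The plan is to deduce the corollary from Theorem~\ref{metric} by setting it up in the right coordinates. The set-valued mapping I want to apply metric regularity to is the constant multifunction $F \colon \R^p \tto \R^q$ defined by $F(u) = S$ for every $u$, based at the point $\bar u = 0$ and the value $\bar v = 0 \in S = F(0)$. First I would verify that $F$ is metrically regular at $(0,0)$: this is exactly a restatement of the hypothesis $N_S(0) \cap \mbox{\rm Null}(\bar A^*) = \{0\}$ once we also fold the map $\bar A$ into the picture. In fact it is cleaner to apply the perturbation theorem to the mapping $u \mapsto F(u) = S$ together with the linear perturbation $H = -A$, since $(F+H)(u) = S - Au$ and the inclusion $v + Au \in S$ is equivalent to $v \in (F-A)^{-1}(\cdot)$ evaluated appropriately. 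So I would instead take $F_0(u) := S - \bar A u$ as the base mapping; then $F_0^{-1}(v) = \{u : v + \bar A u \in S\}$, and $0 \in F_0(0)$.

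The key step is to check that $F_0$ is metrically regular at $(0,0)$, i.e.\ that there are $\kappa, a > 0$ with $\dist(u, F_0^{-1}(v)) \le \kappa\,\dist(v, F_0(u))$ for $(u,v)$ near $(0,0)$. This is where the transversality hypothesis $N_S(0) \cap \mbox{\rm Null}(\bar A^*) = \{0\}$ enters: by the Mordukhovich criterion (coderivative characterization of metric regularity, see \cite{Roc98,Mor06}), metric regularity of $u \mapsto S - \bar A u$ at a point $(0,0)$ is equivalent to the condition that the only $y$ with $\bar A^* y \in N_S(0)$ and $y \in N_{\R^q}(0) = \{0\}$\,---\,wait, more precisely one computes the coderivative of $u \mapsto S - \bar A u$ and finds that the kernel condition reduces exactly to $y \in N_S(0)$, $\bar A^* y = 0$ implies $y = 0$. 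That is our hypothesis. I expect this coderivative computation to be the main obstacle\,---\,one must be careful that $S$ is merely closed (not convex or smooth), so the relevant normal cone is the limiting normal cone, and one should either cite a standard metric-regularity criterion for mappings of the form ``constant set minus linear map'' or verify it directly. An honest alternative, if one wants to stay self-contained, is to invoke the fact that $\dist(v, S - \bar A u)$ behaves like $\dist(0, S - \bar A u - v)$ and use a local error-bound argument: the hypothesis says the ``tangential'' directions of $S$ at $0$ and the range of $\bar A$ together span $\R^q$ in the appropriate transversal sense, giving linear opening.

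Once metric regularity of $F_0$ at $(0,0)$ is established, I apply Theorem~\ref{metric} with $p,q$ as given and with perturbations $H = \bar A - A$ (so $F_0 + H$ sends $u \mapsto S - Au$, matching the perturbed linear map $A$): there exist $\delta, \gamma > 0$ such that whenever $\|A - \bar A\| < \delta$ and $u, v$ are within $\delta$ of $0$,
\[
\dist\!\big(u, (F_0+H)^{-1}(v)\big) \le \gamma\,\dist\!\big(v, (F_0+H)(u)\big).
\]
Now evaluate this at $u = 0$: the right-hand side is $\gamma\,\dist(v, S)$, which is at most $\gamma|v|$ since $0 \in S$; the left-hand side is $\dist(0, \{u : v + Au \in S\})$. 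Hence there is a point $u$ in that set with $|u| \le \gamma|v| + \text{(arbitrarily small slack)}$, and since the set is closed (as $S$ is closed and $A$ linear) the infimum is attained, giving an actual solution $u$ of $v + Au \in S$ with $|u| \le \gamma|v|$ whenever $(v,A)$ is within the appropriate neighborhood of $(0,\bar A)$. Finally I would note that $\dist(v,S) \le |v|$ requires only $0 \in S$, and that shrinking $\gamma$-independent neighborhoods to enforce $|v| < \delta$ and $\|A - \bar A\| < \delta$ is harmless. This completes the proof; the only delicate point, as flagged, is the passage from the normal-cone transversality hypothesis to metric regularity of the base mapping.
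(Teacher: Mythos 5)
Your proposal is correct and follows essentially the same route as the paper: define the base mapping $u \mapsto Au - S$ (you use the sign-flipped $S - Au$, which changes nothing), establish its metric regularity at the origin via the coderivative (Mordukhovich) criterion from \cite[Thm 9.43]{Roc98} --- where the kernel condition is exactly the hypothesis $N_S(0) \cap \mbox{\rm Null}(\bar A^*) = \{0\}$ --- and then invoke Theorem~\ref{metric} with the perturbation $H = \bar A - A$, evaluating at $u=0$ and using $0 \in S$ to bound $\dist(v,S)$ by $|v|$. The coderivative computation you flag as the delicate step is precisely the one the paper carries out, and it goes through as you sketch it.
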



We are now ready to prove the main result of this subsection.

{\em Proof of Theorem~\ref{th:pert}.}  Let $S \subset \R^m \times \R$
be the epigraph of $h$, and define a map $\bar A \colon \R^n \times \R
\rightarrow \R^m \times \R$ by $\bar A(z,\tau) = (\bar G z,\tau)$.
From $\partial^{\infty} h(\bar{z}) \cap \mbox{Null} (\bar{G}^*) =
\{0\}$, we have $\mbox{Null}(\bar A^*) = \mbox{Null}(\bar G^*) \times
\{0\}$, so \cite[Theorem~8.9]{Roc98} shows that
\[
N_S(\bar{z},h(\bar{z})) \cap \mbox{\rm Null}(\bar A^*) = \{(0,0)\}.
\]
For any vector $z$ and linear map $G$ with $(z,G)$ near $(\bar z,\bar G)$,
the vector $(z,0) \in \R^m \times \R$ is near the vector $(\bar z,0)$
and the map $(w,\tau) \mapsto (Gw,\tau)$ is near the map $(w,\tau)
\mapsto (\bar G w,\tau)$.  The previous corollary shows the existence
of a constant $\gamma > 0$ such that, for all such $z$ and $G$, the
inclusion
\[
(z,0) + (Gw,\tau) \in S
\]
has a solution satisfying $|(w,\tau)| \le \gamma|(z-\bar{z},0)|$, and the
result follows.~~~ $\Box$

We end this subsection with another tool to be used later, whose proof (in the Appendix)
is a straightforward application of standard ideas from variational
analysis.  Like Theorem \ref{metric}, this tool concerns metric
regularity, this time for a constraint system of the form $F(z) \in S$
for an unknown vector $z$, where the map $F$ is smooth, and $S$ is a
closed set.

\begin{theorem}[metric regularity of constraint
  systems] \label{constraint} 
  Consider a $\cC^1$ map $F \colon \R^p
  \to \R^q$, a point $\bar z \in \R^p$, and a closed set $S \subset
  \R^q$ containing the vector $F(\bar z)$.  Suppose the condition
\[
N_S\big( F(\bar z) \big) \cap \mbox{\rm Null}(\nabla F(\bar z)^*) ~=~ \{0\}
\]
holds.  Then there exists a constant $\kappa > 0$ such that all points $z \in \R^p$ near $\bar z$ satisfy the inequality
\[
\mbox{\rm dist} \big(z,F^{-1}(S)\big) \le \kappa \, \mbox{\rm dist}(F(z),S).
\]
\end{theorem}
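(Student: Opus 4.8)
The plan is to reduce this to the perturbation theorem already in hand, namely Theorem \ref{metric}, by linearizing $F$ at $\bar z$. First I would introduce the linear map $L := \nabla F(\bar z) \colon \R^p \to \R^q$ and the affine map $A(z) := F(\bar z) + L(z - \bar z)$, and define the set-valued mapping $\Phi \colon \R^p \tto \R^q$ by $\Phi(z) = L z - S$ (up to a harmless translation to center things at the origin, as in the proof of Corollary \ref{co:pert}). The hypothesis $N_S(F(\bar z)) \cap \mbox{\rm Null}(\nabla F(\bar z)^*) = \{0\}$ is exactly the condition that makes $\Phi$ metrically regular at $\bar z$ for $0$: this is the coderivative criterion \cite[Thm~9.43]{Roc98}, applied just as in the proof of Corollary \ref{co:pert}, since $D^*\Phi(\bar z \mid 0)(v) = \{L^* v\}$ when $v \in N_S(F(\bar z))$ and $\emptyset$ otherwise. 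So the affine constraint system $A(z) \in S$, i.e.\ $z \in \Phi^{-1}(0)$ after recentering, enjoys a metric regularity estimate with some constant $\kappa_0$.

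Next I would pass from the affine system to the true $\cC^1$ system. Write $F(z) = A(z) + r(z)$, where the remainder $r(z) := F(z) - F(\bar z) - L(z-\bar z)$ satisfies $r(\bar z) = 0$ and, since $F$ is $\cC^1$, has $\|r(z) - r(z')\| = o(1)\,|z - z'|$ uniformly for $z, z'$ near $\bar z$; in particular $r$ is Lipschitz on a small ball with a Lipschitz modulus that can be made as small as we like by shrinking the ball. There are two natural ways to finish. One is to invoke a Lyusternik--Graves / Robinson-type stability theorem that says metric regularity of a mapping is preserved under the addition of a single-valued Lipschitz perturbation of sufficiently small modulus, applied to $\Phi$ plus the perturbation $z \mapsto r(z)$; this yields directly that $z \mapsto F(z) - S$ is metrically regular at $\bar z$ for $0$, which is the claimed inequality. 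Alternatively, and more in the spirit of the rest of this subsection, I would use Theorem \ref{metric} itself: on a small enough ball the perturbation is (up to linear order) a linear map $H$ of norm $< \delta$, and one handles the genuinely nonlinear part of $r$ by a standard contraction-mapping / iteration argument à la \cite[Theorem~3.3]{Don03}, exactly the template that the first proof of Theorem \ref{metric} already follows.

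The routine parts are the coderivative computation (identical to Corollary \ref{co:pert}) and the bookkeeping of constants. The one step that needs genuine care is the perturbation passage: Theorem \ref{metric} as stated perturbs by a \emph{linear} map $H$, whereas $r$ is only a small-Lipschitz nonlinear remainder, so either I must cite a nonlinear version of the perturbation result (Lyusternik--Graves, e.g.\ \cite[Thm~9.54]{Roc98} or the framework of \cite{DmiK08}) or re-run the iterative estimate from \cite{Don03} with $r$ in place of the linear $Hu$. I expect this to be the main obstacle, and I would resolve it by citing the standard result that metric regularity is stable under perturbation by single-valued maps that are Lipschitz with modulus strictly below the covering rate of $\Phi$, which applies here because shrinking the neighborhood of $\bar z$ drives the Lipschitz modulus of $r$ to zero while the covering rate of $\Phi$ stays bounded below by $\kappa_0^{-1}$. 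The constant $\kappa$ in the conclusion is then any number slightly larger than $\kappa_0$, valid on a correspondingly small ball about $\bar z$.
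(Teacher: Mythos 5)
Your proposal is correct in outline, but it takes a noticeably longer route than the paper, and the extra length is exactly where you flag the difficulty. The paper's proof is three lines: it defines $G(z) = F(z) - S$ and applies the coderivative criterion \cite[Thm 9.43]{Roc98} \emph{directly to this nonlinear mapping}. The coderivative calculation is no harder than in Corollary~\ref{co:pert}, because the sum rule for coderivatives handles a smooth single-valued part plus a constant set-valued part cleanly: $D^*G(\bar z|0)(v) = \{\nabla F(\bar z)^* v\}$ for $v \in N_S(F(\bar z))$ and $\emptyset$ otherwise, so the hypothesis kills the kernel of the coderivative and metric regularity of $G$ at $\bar z$ for $0$ follows at once — which is precisely the claimed distance estimate. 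Your plan instead linearizes $F$ at $\bar z$, gets metric regularity of the affine system from the same coderivative computation, and then must transfer regularity back to the nonlinear system. That transfer is the genuine issue you identify: Theorem~\ref{metric} as stated only perturbs by a linear map $H$, whereas the remainder $r(z) = F(z) - F(\bar z) - \nabla F(\bar z)(z - \bar z)$ is a nonlinear perturbation that is merely Lipschitz with small modulus on small balls. Your resolution — invoke the Lyusternik--Graves/Milyutin stability of metric regularity under single-valued Lipschitz perturbations of modulus below the covering rate, or rerun the iteration of \cite{Don03} — is sound and standard (the framework of \cite{DmiK08} cited in the paper covers it), so the proof goes through with $\kappa$ slightly larger than the modulus of the linearized system. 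What your route buys is self-containedness relative to the perturbation machinery of this subsection and a constant with an explicit provenance; what the paper's route buys is brevity, since the coderivative criterion already absorbs the nonlinearity of $F$ and no linearization or stability theorem is needed.
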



\subsection{The Proximal Step}

We now prove a key result.  Under a standard transversality condition,
and assuming the proximal parameter $\mu$ is sufficiently large (if
the function $h$ is nonconvex), we show the existence of a step $d =
O(|x-\bar x|)$ in the proximal linearized subproblem \eqnok{pls} with
corresponding objective value close to the critical value $h(\bar c)$.

When the outer function $h$ is locally Lipschitz (or, in particular,
continuous and convex), this result and its proof simplify
considerably.  First, the transversality condition is automatic.
Second, while the proof of the result appeals to the technical tool we
developed in the previous subsection (Theorem~\ref{th:pert}), this tool
is trivial in the Lipschitz case, as we noted earlier. 
We state the theorem in a form that encompasses both the general case
and the specialization to convex $h$.

\begin{theorem}[proximal step] \label{th:proxstep}
Consider a function $h \colon \R^m \rightarrow \bar\R$ and a map $c \colon
\R^n \rightarrow \R^m$.  Suppose that $c$ is $\cC^2$
around the point $\bar x \in \R^n$, that $h$ is prox-regular at the point
$\bar c = c(\bar x)$, and that the composite function $h \circ c$ is critical
at $\bar x$.  Assume the transversality condition
\begin{equation} \label{cq}
\partial^{\infty} h(\bar c) \cap \mbox{\rm Null}(\nabla c(\bar x)^*) ~=~ \{0\}.
\end{equation}
Then there exist numbers $\bar\mu \ge 0$, $\delta>0$, and $\bar\rho \ge 0$, and a mapping
$d: B_{\delta}(\bar{x}) \times (\bar{\mu},\infty) \to \R^n$ such that
the following properties hold.
\begin{itemize}
\item[(a)] For all points $x \in B_{\delta}(\bar{x})$ and all
  parameter values $\mu > \bar{\mu}$, the step $d(x,\mu)$ is a local
  minimizer of the proximal linearized subproblem (\ref{pls}) with 
\[
h \big( c(x) + \nabla c(x) d(x,\mu) \big) + \frac{\mu}{2} | d(x,\mu) |^2 \le 
h \big( c(x) \big),
\]
and moreover $|d(x,\mu)| \le \bar\rho |x-\bar{x}|$.

\item[(b)] Given any sequences $x_r \to \bar{x}$ and $\mu_r >
  \bar{\mu}$, then if either $\mu_r |x_r-\bar{x}|^2 \to 0$ or
  $h\big(c(x_r)\big) \to h(\bar{c})$, we have 
\begin{equation}  \label{eq:happr}
h\big(c(x_r) + \nabla c(x_r) d(x_r,\mu_r)\big) \to h(\bar c).
\end{equation}

\item[(c)] When $h$ is convex and lower semicontinuous, the results of
  parts (a) and (b) hold with $\bar\mu = 0$.
\end{itemize}
\end{theorem}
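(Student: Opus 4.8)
\emph{Plan.} The idea is to combine the metric-regularity tool Theorem~\ref{th:pert} with the prox-regularity inequality \eqnok{pr.1}, but applied to a \emph{carefully chosen} trial step whose linearized objective value lies within $O(|x-\bar x|^2)$ --- not merely $O(|x-\bar x|)$ --- of the critical value. First I would normalize $h(\bar c)=0$ and record the two basic facts that drive everything. Since $h\circ c$ is critical at $\bar x$ and the transversality hypothesis \eqnok{cq} is precisely the qualification needed for the chain rule, there is a multiplier $\bar v$ satisfying \eqnok{hc_necc}, i.e. $\bar v\in\partial h(\bar c)$ with $\nabla c(\bar x)^*\bar v=0$; and prox-regularity of $h$ at $\bar c$ gives $\rho_0>0$ with
\[
h(c')\ \ge\ \langle\bar v,c'-\bar c\rangle-\tfrac{\rho_0}{2}|c'-\bar c|^2
\qquad\text{for }c'\text{ near }\bar c
\]
(this holds with $\rho_0=0$ when $h$ is convex and lower semicontinuous, by the subgradient inequality). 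Finally, applying Theorem~\ref{th:pert} with $\bar z=\bar c$, $\bar G=\nabla c(\bar x)$ --- its hypothesis is exactly \eqnok{cq} --- produces a constant $\gamma>0$ used to ``repair'' trial steps in the nonlipschitz case.

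I would then establish two estimates, valid for $x$ in a fixed ball $B_\delta(\bar x)$ and $d$ in a fixed closed ball $B_{R_0}(0)$. First, a quadratic \emph{lower bound}
\[
h_{x,\mu}(d)\ \ge\ -K_1|x-\bar x|^2+\tfrac{\mu}{4}|d|^2
\qquad(\mu>\bar\mu),
\]
obtained from the displayed prox-inequality together with the second-order Taylor expansion of $c$ at $\bar x$, the bound $\|\nabla c(x)^*\bar v\|=O(|x-\bar x|)$ coming from $\nabla c(\bar x)^*\bar v=0$ and local Lipschitzness of $\nabla c$, and Young's inequality; here $\bar\mu$ is a fixed constant determined by $\rho_0$ and $\|\nabla c(\bar x)\|$, chosen so the coefficient of $|d|^2$ stays $\ge\mu/4$. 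Second, a \emph{good trial step}: take the affine-exact step $\tilde d:=-(x-\bar x)$, for which the $\cC^2$ expansion gives $c(x)+\nabla c(x)\tilde d=\bar c+O(|x-\bar x|^2)$, and then apply Theorem~\ref{th:pert} with $z=c(x)+\nabla c(x)\tilde d$, $G=\nabla c(x)$ to get a correction $w'$ with $|w'|=O(|x-\bar x|^2)$ and $h\big(c(x)+\nabla c(x)(\tilde d+w')\big)\le O(|x-\bar x|^2)$. Setting $d_0:=\tilde d+w'$ then yields $|d_0|\le(1+o(1))|x-\bar x|$ and
\[
h_{x,\mu}(d_0)\ \le\ K_2|x-\bar x|^2+\tfrac{\mu}{2}(1+o(1))|x-\bar x|^2 .
\]

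With these in hand, (a) follows: the set $\{d\in B_{R_0}(0):h_{x,\mu}(d)\le h_{x,\mu}(d_0)\}$ is nonempty and, since $h_{x,\mu}$ is lower semicontinuous, compact, so $h_{x,\mu}$ attains a minimum $d(x,\mu)$ over it (choosing one minimizer if several exist); comparing the two estimates shows every point of that set has $|d|^2\le\big(\tfrac{4(K_1+K_2)}{\bar\mu}+3\big)|x-\bar x|^2$ --- the factors of $\mu$ cancel, so the resulting $\rho$ is independent of $\mu$ --- whence for $\delta$ small $d(x,\mu)$ lies in the interior of $B_{R_0}(0)$ and is therefore a genuine local minimizer of \eqnok{pls}. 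For (b), the prox-inequality and the bound $|d(x_r,\mu_r)|\le\rho|x_r-\bar x|$ from (a) give $h\big(c(x_r)+\nabla c(x_r)d(x_r,\mu_r)\big)\ge-O(|x_r-\bar x|^2)\to0$, while $h\big(c(x_r)+\nabla c(x_r)d(x_r,\mu_r)\big)\le h_{x_r,\mu_r}(d(x_r,\mu_r))\le\min\{h(c(x_r)),\,h_{x_r,\mu_r}(d_0)\}$, and each of the two hypotheses of (b) drives one of these upper bounds to $0$, giving \eqnok{eq:happr}. Part (c) is the special case $\rho_0=0$: then the lower bound above holds with $\bar\mu$ arbitrarily small, $h_{x,\mu}$ is strictly convex and coercive for every $\mu>0$, and the subproblem is well posed throughout, so one may take $\bar\mu=0$.

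I expect the \emph{main obstacle} to be the construction of the trial step $d_0$. Theorem~\ref{th:pert} applied directly at $c(x)$ only produces a step whose linearized value is within $O(|x-\bar x|)$ of $h(\bar c)$, which is too weak to confine the minimizer to a ball of radius $O(|x-\bar x|)$ --- it would only give $O(|x-\bar x|^{1/2})$. The resolution, namely first taking the affine-exact step $-(x-\bar x)$ and only then invoking Theorem~\ref{th:pert} as a second-order correction, is the crux; and keeping the $\tfrac{\mu}{4}|d|^2$ in the lower bound matched against the $\tfrac{\mu}{2}|d_0|^2$ in the trial value is exactly what makes $\rho$ (and $\bar\mu$) come out independent of the particular $\mu$.
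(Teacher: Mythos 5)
Your proposal is correct and follows essentially the same route as the paper: the same multiplier $\bar v$ from the chain rule, the same prox-regularity lower bound, and—crucially—the same trial step, since your $d_0=-(x-\bar x)+w'$ is exactly the paper's $\hat d(x)=w-x$ obtained by applying Theorem~\ref{th:pert} to $z(x)=c(x)-\nabla c(x)(x-\bar x)$, which is precisely the ``affine-exact step plus second-order correction'' you identify as the crux. The only cosmetic difference is that you extract the bound $|d(x,\mu)|\le\rho|x-\bar x|$ by directly confining the sublevel set, whereas the paper runs the same comparison of upper and lower estimates as a contradiction argument with sequences.
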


\begin{proof}
  Without loss of generality, suppose $\bar x = 0$ and
  $\bar{c}=c(0)=0$, and furthermore $h(0)=0$.  By assumption,
$
0 \in \partial(h \circ c)(0) \subset \nabla c(0)^* \partial h(0),
$
using the chain rule \cite[Thm 10.6]{Roc98}, so there exists a vector
$
v \in \partial h(0) \cap \mbox{Null}(\nabla c(0)^*).
$

We first prove part (a). By prox-regularity, there exists a constant
$\rho \ge 0$ such that
\begin{equation} \label{hrho}
h(z) \ge \ip{v}{z} - \frac{\rho}{2}|z|^2
\end{equation}
for all small vectors $z \in \R^m$.  Hence, there exists a constant
$\delta_1 > 0$ such that $\nabla c$ is continuous on $B_{\delta_1}(0)$
and
\[
h_{x,\mu}(d) ~\ge~ \ip{v}{c(x) + \nabla c(x) d} - \frac{\rho}{2}|c(x) + \nabla c(x) d|^2 + \frac{\mu}{2}|d|^2
\]
for all vectors $x,d \in B_{\delta_1}(0)$.  As a consequence, we have that
\[
h_{x,\mu}(d) ~\ge~ \min_{|x|\le \delta_1, \, |d| = \delta_1} \,
\left\{ \ip{v}{c(x) + \nabla c(x) d} - \frac{\rho}{2}|c(x) + \nabla c(x) d|^2
\right\} + \frac{\mu}{2}|d|^2,
\]
and the term in braces is finite by continuity of $c$ and $\nabla c$
on $B_{\delta_1}(0)$. Hence by choosing $\bar{\mu}$ sufficiently large
(certainly greater than $\rho \| \nabla c(0) \|^2$) we can ensure that
$
h_{x,\bar\mu}(d) \ge  1~~\mbox{whenever}~|x| \le \delta_1,~ |d|=\delta_1.
$
Then for $x \in B_{\delta_1}(0)$, $|d| =\delta_1$, and $\mu \ge
\bar{\mu}$, we have
\begin{equation} \label{eq:hmu}
h_{x,\mu}(d) = h_{x,\bar\mu}(d) + \frac12 (\mu-\bar\mu) |d|^2  
\ge 1 + \frac12   (\mu-\bar\mu) \delta_1^2.
\end{equation}

Since $c$ is $\cC^2$ at $0$, there exist constants $\beta > 0$ and
$\delta_2 \in (0,\delta_1)$ such that, for all $x \in B_{\delta_2}(0)$,
the vector
\begin{equation} \label{defz}
z(x) = c(x) - \nabla c(x) x
\end{equation}
satisfies $|z(x)| \le \beta|x|^2$.  Setting $G = \nabla c(x)$,
$\bar{G} = \nabla c(0)$, $\bar{z}=0$, and $z=z(x)$ in
Theorem~\ref{th:pert}, we obtain the following result.  
For some constants $\gamma > 0$ and
$\delta_3 \in (0,\delta_2)$, given any vector $x \in B_{\delta_3}(0)$,
there exists a vector $\hat{d}(x) \in \R^n$ (defined by $\hat{d}(x):=w-x$, in the
notation of the theorem) satisfying
\begin{align*}
|x+\hat{d}(x)|                            & \le \gamma|z(x)|  
\le \gamma \beta|x|^2                                                                                                                                        \\
h\big(c(x) + \nabla c(x) \hat{d}(x) \big) & \le \gamma|z(x)| 
\le \gamma \beta|x|^2.
\end{align*}
We deduce the existence of a constant $\delta_4 \in (0,\delta_3)$ such
that, for all $x \in B_{\delta_4}(0)$, the corresponding $\hat{d}(x)$ satisfies
$
|\hat{d}(x)| \le |x| + \gamma \beta|x|^2 ~<~ \delta_1, 
$
and
\begin{align*}
h_{x,\mu}(\hat{d}(x))                     & = h(c(x)+\nabla c(x)\hat{d}(x)) + \frac{\mu}{2} |\hat{d}(x)|^2                                                   \\
                                          & \le \gamma \beta|x|^2 + \frac{\bar\mu}{2} \big(|x| + \gamma \beta|x|^2\big)^2 + \frac12 (\mu-\bar\mu) \delta_1^2 \\
                                          & < 1 + \frac12 (\mu-\bar\mu) \delta_1^2.
\end{align*}
The lower semicontinuous function $h_{x,\mu}$ must have a minimizer
(which we denote $d(x,\mu)$) over the compact set
$B_{\delta_1}(0)$. Since $d=0$ is feasible for $B_{\delta_1}(0)$, we
must have $h_{x,\mu} \big( d(x,\mu) \big) \le h_{x,\mu}(0) = h \big(
c(x) \big)$. Moreover, the inequality above implies that the
corresponding minimum value is majorized by
$h_{x,\mu}\big(\hat{d}(x)\big)$, and thus is strictly less than $1 +
(1/2) (\mu-\bar\mu) \delta_1^2$.  But inequality (\ref{eq:hmu})
implies that this minimizer must lie in the {\em interior} of the ball
$B_{\delta_1}(0)$; in particular, it must be an unconstrained local
minimizer of $h_{x,\mu}$. By setting $\delta=\delta_4$, we complete
the proof of the first part of (a).  Notice further that for $x
\in B_{\delta_4}(0)$, we have
\begin{align} \label{hlower} 
                                          & h\big(c(x) + \nabla c(x)d(x,\mu)\big)                                                                            \\
\nonumber
                                          & \qquad\qquad ~ \le ~ h_{x,\mu}\big(d(x,\mu)\big) 
~ \le ~ h_{x,\mu}\big(\hat{d}(x)\big) 
~ \le ~  \gamma \beta|x|^2 + \frac{\mu}{2} \big(|x| + \gamma
  \beta|x|^2\big)^2.
\end{align}

We now prove the remainder of part (a), that is, uniform boundedness
of the ratio $|d(x,\mu)|/|x|$. Suppose there are sequences $x_r \in
B_{\delta}(\bar{x})$ and $\mu_r > \bar{\mu}$ such that $|d_r|/|x_r|
\to \infty$, where we use notation $d_r := d(x_r,\mu_r)$ for brevity.
Since $|d_r| \le \delta_1$ by the arguments above, we must have $x_r
\to 0$.  By the arguments above, for all large $r$ we have the
following inequalities:
\begin{align*}
\gamma \beta & |x_r|^2 + \frac{\mu_r}{2} \big(|x_r| + \gamma \beta|x_r|^2\big)^2
                                                            \\
             & \ge
h_{x_r,\mu_r}(d_r)                                          \\
             & \ge
\ip{v}{c(x_r) + \nabla c(x_r) d_r} - \frac{\rho}{2}|c(x_r) + \nabla c(x_r) d_r|^2 + \frac{\mu_r}{2}|d_r|^2. 
\end{align*}
Dividing each side by
$(1/2) \mu_r |x_r|^2$ and letting $r \rightarrow \infty$, we recall the inequalities
$
\mu_r > \bar{\mu} > \rho \|\nabla c(0)\|^2 \ge 0
$
and observe
that the left-hand side remains finite, while the right-hand side is
eventually dominated by $(1-\rho \| \nabla c(0) \|^2/\mu_r) |
d_r|^2/|x_r|^2$, which approaches $\infty$, yielding a contradiction.

For part (b), suppose first that $\mu_r |x_r|^2 \to 0$.  By
substituting $(x,\mu) = (x_r,\mu_r)$ into \eqnok{hlower}, we have that
\begin{equation}  \label{limsup}
\limsup \, h\big(c(x_r) + \nabla c(x_r)d_r\big) ~ \le ~ 0.
\end{equation}
From part (a), we have that $|d_r|/|x_r|$ is uniformly bounded, hence
$d_r \to 0$ and thus $c(x_r) + \nabla c(x_r)d_r \to 0$.  Being
prox-regular, $h$ is lower semicontinuous at $0$, so
\[
\liminf \, h\big(c(x_r) + \nabla c(x_r)d_r \big) ~ \ge ~ 0.
\]
Combining these last two inequalities gives
$
h\big(c(x_r) + \nabla c(x_r)d_r \big)  \to 0,
$
as required. 

Now suppose instead that $h\big(c(x_r)\big) \to h(\bar{c}) = 0$. We have from
(\ref{hlower}) that
\[
h\big(c(x_r) + \nabla c(x_r) d_r \big) \le h_{x_r,\mu_r}(d_r) \le
h_{x_r,\mu_r}(0) = h\big(c(x_r)\big).
\]
Taking the lim sup, we again obtain (\ref{limsup}),
and the result follows as before.

For part (c), when $h$ is lower semicontinuous and convex, the
argument simplifies.  We set $\rho=0$ in (\ref{hrho}) and choose the
constant $\delta > 0$ so the map $\nabla c$ is continuous on
$B_\delta(0)$.  For constants $\beta$ and $\gamma$ as before,
Theorem~\ref{th:pert} again guarantees the existence, for all small
points $x$, of a step $\hat{d}(x)$ satisfying
$
h\big(c(x) + \nabla c(x) \hat{d}(x) \big) \le \gamma \beta |x|^2.
$
It follows that the proximal linearized objective $h_{x,\mu}$ is
somewhere finite, so has compact level sets, by coercivity.  Thus it
has a global minimizer $d(x,\mu)$ (unique, by strict convexity), which
must satisfy the inequality
\[
h\big(c(x) + \nabla c(x) d(x,\mu)\big) \le 
h\big(c(x) + \nabla c(x) \hat{d}(x) \big) \le
\gamma \beta |x|^2.
\]
The remainder of the argument proceeds as before.
\end{proof}

We elaborate on Theorem~\ref{th:proxstep}(b) by giving a simple
example of a function prox-regular at $c(\bar{x})$ such that for
sequences $x_r \to \bar{x}$ and $\mu_r \to \infty$ that satisfy
neither $\mu_r |x_r-\bar{x}|^2 \to 0$ nor $h\big(c(x_r)\big) \to
h\big(c(\bar{x})\big)$, there exists a sequence of {\em global}
minimizers $d_r := d(x_r,\mu_r)$ of the subproblem \eqnok{pls} for
which (\ref{eq:happr}) is not satisfied. For a scalar $x$, take
$c(x)=x$ and
\[
h(c) = \begin{cases} -c              & (c \le 0)   \\
1+c                                  & (c >0). \end{cases}
\]
The unique critical point is clearly $\bar{x}=0$ with $c(\bar{x})=0$
and $h\big(c(\bar{x})\big)=0$, and this problem satisfies the assumptions of
the theorem. Consider $x>0$, for which the subproblem \eqnok{pls} is
\[
\min_d \, h_{x,\mu}(d) = h(x+d)+ \frac{\mu}{2} d^2 =
\begin{cases} -x-d+\frac{\mu}{2} d^2 & (x+d \le 0) \\
1+x+d+\frac{\mu}{2} d^2              & (x+d >0).
\end{cases}
\]
When $\mu_r x_r \in (0,1]$, then $d_r=-x_r$ is the only local
  minimizer of $h_{x_r,\mu_r}$.  When $\mu_r x_r>1$, the situation is
  more interesting. The value $d_r=-\mu_r^{-1}$ minimizes the
  ``positive'' branch of $h_{x_r,\mu_r}$, with function value
  $1+x_r-(2\mu_r)^{-1}$, and there is a second local minimizer at
  $d_r=-x_r$, with function value $(\mu_r/2) x_r^2$. (In both cases,
  these minimizers satisfy the estimate $|d_r| = O(|x_r-\bar{x}|)$
  proved in part (a).) Comparison of the function values show that in
  fact the global minimum is achieved at the former point
  ($d_r=-\mu_r^{-1}$) when $x_r > \mu_r^{-1} + \sqrt{2}
  \mu_r^{-1/2}$. If this step is taken, we have $x_r+d_r>0$, so the
  new iterate remains on the upper branch of $h$. For sequences $x_r =
  \mu_r^{-1} + 2 \mu_r^{-1/2}$ and $\mu_r \to \infty$, we thus have
  for the global minimizer $d_r=-\mu_r^{-1}$ of $h_{x_r,\mu_r}$ that
  $h(c(x_r)+\nabla c(x_r)d_r) >1$ for all $r$, while
  $h\big(c(\bar{x})\big)=0$, so that \eqnok{eq:happr} does not hold.
  The alternative sequence of local minimizers $d_r = -x_r$ of does,
  however, satisfy the limit \eqnok{eq:happr}.

\subsection{Restoring Feasibility}

In the algorithmic framework to be discussed below, the basic
iteration starts at a current point $x \in \R^n$ such that the
function $h$ is finite at the vector $c(x)$.  We then solve the
proximal linearized subproblem \eqnok{pls} to obtain the step $d =
d(x,\mu) \in \R^n$.  Under reasonable conditions we have shown that,
for $x$ near the critical point $\bar x$, we have $d = O(|x-\bar x|)$
and furthermore we know that the value of $h$ at the vector $c(x) +
\nabla c(x) d$ is close to the critical value $h\big(c(\bar x)\big)$.

The algorithmic idea is now to update the point $x$ to a new point
$x+d$.  When the function $h$ is Lipschitz, this update is motivated
by the fact that, since the map $c$ is $\cC^2$, we have, uniformly for
$x$ near the critical point $\bar x$,
\[
c(x+d) - (c(x) + \nabla c(x) d) ~=~ O(|d|^2)
\]
and hence
\[
h\big(c(x+d)\big) - h\big((c(x) + \nabla c(x) d)\big) ~=~ O(|d|^2).
\]
However, if $h$ is not Lipschitz, it may not be appropriate to update
$x$ to $x+d$: the value $h\big( c(x+d) \big)$ may even be infinite.

In order to take another step, we need somehow to restore
the point $x+d$ to feasibility, or more generally to find a nearby
point with objective value not much worse than our linearized estimate
$h(c(x) + \nabla c(x) d)$.  Depending on the form of the function $h$,
this may or may not be easy computationally.  However, as we now
discuss, our fundamental transversality condition \eqnok{cq},
guarantees that such a restoration is always possible in theory. In
the next section, we refer to this restoration process as an
``efficient projection.''

\begin{theorem}[linear estimator improvement] \label{th:feasproj}
  Consider a map $c \colon \R^n \rightarrow \R^m$ that is $\cC^2$
  around the point $\bar x \in \R^n$, and a lower semicontinuous
  function $h \colon \R^m \rightarrow \bar\R$ that is finite at the
  vector $\bar c = c(\bar x)$.  Assume that the transversality condition
  \eqnok{cq} holds.
Then there exist constants $\gamma$ and $\delta > 0$ such that, for any
point $x \in B_\delta(\bar x)$ and any step $d \in B_\delta(0) \subset
\R^n$ for which $|h(c(x) + \nabla c(x) d) - h(\bar{c})| < \delta$,
there exists a point $\xnew \in \R^n$
satisfying
\begin{equation} \label{eq:feas}
|\xnew - (x+d)| \le \gamma |d|^2
~~\mbox{and}~~
h\big(c(\xnew)\big) ~\le~  h(c(x) + \nabla c(x) d) +
\gamma |d|^2.
\end{equation}
\end{theorem}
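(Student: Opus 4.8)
The plan is to read the conclusion as a metric-regularity statement for the constraint system $(c(z),r)\in\mathrm{epi}\,h$ and apply Theorem~\ref{constraint}. Write $y:=c(x)+\nabla c(x)d$ for the linearized image, put $S:=\mathrm{epi}\,h\subset\R^m\times\R$, and define the $\cC^1$ map $F\colon\R^n\times\R\to\R^m\times\R$ by $F(z,r)=(c(z),r)$. Then $F^{-1}(S)=\mathrm{epi}(h\circ c)$, which is nonempty (it contains $(\bar x,h(\bar c))$) and closed, since $h\circ c$ is lower semicontinuous as the composition of the lsc function $h$ with the continuous map $c$.

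First I would verify the qualification hypothesis of Theorem~\ref{constraint} at the reference point $\bar z:=(\bar x,h(\bar c))$, for which $F(\bar z)=(\bar c,h(\bar c))\in S$. Since $\nabla F(\bar z)^*(v,t)=(\nabla c(\bar x)^*v,\,t)$, we have $\mbox{\rm Null}(\nabla F(\bar z)^*)=\mbox{\rm Null}(\nabla c(\bar x)^*)\times\{0\}$. Any pair $(v,0)$ lying in $N_S(\bar c,h(\bar c))=N_{\mathrm{epi}\,h}(\bar c,h(\bar c))$ must satisfy $v\in\partial^\infty h(\bar c)$, by the characterization of horizon subgradients through the epigraphical normal cone \cite[Theorem~8.9]{Roc98} --- exactly as invoked in the proof of Theorem~\ref{th:pert}; combining this with $v\in\mbox{\rm Null}(\nabla c(\bar x)^*)$ and the transversality assumption \eqnok{cq} forces $v=0$. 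Hence $N_S(F(\bar z))\cap\mbox{\rm Null}(\nabla F(\bar z)^*)=\{(0,0)\}$, and Theorem~\ref{constraint} produces a constant $\kappa>0$ and a neighborhood of $\bar z$ on which
\[
\dist\!\big((z,r),\mathrm{epi}(h\circ c)\big)\ \le\ \kappa\cdot\dist\!\big((c(z),r),\mathrm{epi}\,h\big).
\]

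Next I would apply this inequality at $(z,r):=(x+d,\,h(y))$. The $\cC^2$ smoothness of $c$ yields, after shrinking $\delta$, a constant $\beta>0$ with $|c(x+d)-y|=|c(x+d)-c(x)-\nabla c(x)d|\le\beta|d|^2$ uniformly for $x\in B_\delta(\bar x)$ and $d\in B_\delta(0)$ (the second-order Taylor estimate). Since $(y,h(y))\in\mathrm{epi}\,h$, this bounds the right-hand side: $\dist\!\big((c(x+d),h(y)),\mathrm{epi}\,h\big)\le|c(x+d)-y|\le\beta|d|^2$. For $\delta$ small, the hypotheses $x\in B_\delta(\bar x)$, $d\in B_\delta(0)$, and $|h(y)-h(\bar c)|<\delta$ keep the point $(x+d,h(y))$ inside the neighborhood where the displayed inequality holds --- this is precisely what the assumption on $h(y)$ buys us (it also guarantees $h(y)$ is finite). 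Therefore $\dist\!\big((x+d,h(y)),\mathrm{epi}(h\circ c)\big)\le\kappa\beta|d|^2$, and since $\mathrm{epi}(h\circ c)$ is closed the infimum is attained: there is $(\xnew,r')\in\mathrm{epi}(h\circ c)$ with $|\xnew-(x+d)|\le\kappa\beta|d|^2$ and $|r'-h(y)|\le\kappa\beta|d|^2$. Then $h\big(c(\xnew)\big)\le r'\le h(y)+\kappa\beta|d|^2$, so \eqnok{eq:feas} holds with $\gamma:=\kappa\beta$ (after one more shrink of $\delta$ to keep $(x+d,h(y))$ in the required neighborhood).

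I expect the one genuinely delicate point is arranging for the error term to be $O(|d|^2)$ rather than merely $O(|d|)$: this forces the reference point inside $\mathrm{epi}\,h$ to be $(y,h(y))$, so that the quantity being measured is the second-order Taylor remainder $c(x+d)-y$; taking instead $(\bar c,h(\bar c))$ would only give $O(|x-\bar x|+|d|)$. The normal-cone bookkeeping in the second paragraph is routine and parallels the computation already done for Theorem~\ref{th:pert}, and the locally Lipschitz (in particular continuous convex) case is trivial --- the transversality condition is then automatic and one takes $\xnew=x+d$, since $h\circ c$ is locally Lipschitz near $\bar x$ and $|h(c(x+d))-h(y)|\le L\,|c(x+d)-y|\le L\beta|d|^2$ for the local Lipschitz constant $L$ of $h$.
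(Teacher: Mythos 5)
Your proposal is correct and follows essentially the same route as the paper: both apply Theorem~\ref{constraint} to the map $F(z,r)=(c(z),r)$ and the set $S=\mbox{epi}\,h$, verify the qualification condition via the epigraphical characterization of horizon subgradients from \cite[Theorem~8.9]{Roc98}, and then measure the distance from the point $\big(x+d,\,h(c(x)+\nabla c(x)d)\big)$ to $F^{-1}(\mbox{epi}\,h)$, bounding it by $\kappa$ times the second-order Taylor remainder $|c(x+d)-c(x)-\nabla c(x)d|$. Your closing observation about why the reference point in $\mbox{epi}\,h$ must be $\big(y,h(y)\big)$ rather than $\big(\bar c,h(\bar c)\big)$ correctly identifies the step that delivers the $O(|d|^2)$ bound.
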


\begin{proof}
Define a $\cC^2$ map $F \colon \R^n \times \R \to \R^m \times \R$ by
$F(x,t) = (c(x),t)$.  Notice that the epigraph $\mbox{epi}\, h$ is a
closed set containing the vector $F\big(\bar c,h(\bar c)\big)$.
Clearly we have
\[
\mbox{Null}\Big(\nabla F\big(\bar x,h(\bar c)\big)^*\Big)
~=~
\mbox{Null}\big(\nabla c(\bar x)^*\big) \times \{0\}.
\]
Recalling the relationship \eqnok{eq:horepi} between
$\partial^{\infty} h$ and $\mbox{epi}\, h$ at $\bar{c}$, we have
\[
(y,0) \in N_{\mbox{\scriptsize epi}\, h}\big(\bar c, h(\bar c) \big)
~~\Leftrightarrow~~
y \in \partial^{\infty} h(\bar c).
\]
Hence the transversality condition is equivalent to 
\[
N_{\mbox{\scriptsize epi}\, h}\big(\bar c, h(\bar c) \big) 
\cap 
\mbox{Null}\Big(\nabla F\big(\bar x,h(\bar c)\big)^*\Big)
~=~ \{0\}.
\]

We next apply Theorem \ref{constraint} to deduce the existence of a
constant $\kappa > 0$ such that, for all vectors $(u,t)$ near the
vector $\big(\bar c, h(\bar c)\big)$ we have
\[
\mbox{dist}\big((u,t),F^{-1}(\mbox{epi}\, h)\big) 
~\le~ 
\kappa \, \mbox{dist}(F(u,t), \mbox{epi}\, h).
\]
Thus there exists a constant $\delta > 0$ such that, for any point $x
\in B_\delta(\bar x)$ and any step $d \in \R^n$ satisfying $|d| \le
\delta$ and $|h(c(x)+\nabla c(x) d) - h(\bar{c})| \le \delta$, we have
\begin{eqnarray*}
\lefteqn{
\mbox{dist}\Big(\big(x+d,h(c(x) + \nabla c(x) d)\big),F^{-1}(\mbox{epi}\, h)\Big) 
}                                                                                      \\
 & \hspace{3cm} & \le~ 
\kappa \, \mbox{dist} \big(F\big(x+d,h(c(x) + \nabla c(x) d)\big), \mbox{epi}\, h\big) \\
 &              & =~ 
\kappa \,  
\mbox{dist} \Big( \big(c(x+d),h(c(x) + \nabla c(x) d)\big), \mbox{epi}\, h\Big)        \\
 &              & \le~
\kappa \, |c(x+d) - (c(x) + \nabla c(x) d)|,
\end{eqnarray*}
since
\[
\big(c(x) + \nabla c(x) d, h(c(x) + \nabla c(x) d)\big) ~\in~ \mbox{epi}\, h.
\]
Since the map $c$ is $\cC^2$, by reducing $\delta$ if necessary we can ensure the existence of a constant $\gamma > 0$ such that the right-hand side of the above chain of inequalities is bounded above by $\gamma |d|^2$.

We have therefore shown the existence of a vector
$
(\xnew,t) \in F^{-1}(\mbox{epi}\, h)
$
satisfying the inequalities
$
|\xnew - (x+d)| \le \gamma |d|^2
$
and
$
|t - h(c(x) + \nabla c(x) d)| \le 
\gamma |d|^2.
$
Since $t \ge h\big(c(\xnew)\big)$, the result follows.
\end{proof}

\subsection{Uniqueness of the Proximal Step and Convergence of Multipliers} 
\label{sec:unique}

%
%

Our focus in this subsection is on uniqueness of the local solution of
(\ref{pls}) near $d=0$, uniqueness of the corresponding multiplier
vector, and on showing that the solution $d(x,\mu)$ of (\ref{pls}) has
a strictly lower subproblem objective value than $d=0$. For the
uniqueness results, we strengthen the transversality condition
(\ref{cq}) to a constraint qualification that we now introduce.

Throughout this subsection we assume that the function $h$ is prox-regular at the point 
$\bar c$.
Since prox-regular functions are subdifferentially regular,
the subdifferential $\partial h(\bar c)$ is a closed and convex set in
$\R^m$, and its recession cone is exactly the horizon subdifferential
$\partial^{\infty} h(\bar{c})$ (see \cite[Corollary~8.11]{Roc98}).
Denoting the subspace parallel to the affine span of the
subdifferential by $\mbox{par}\, \partial h(\bar{c})$, we deduce that
$
\partial^{\infty} h(\bar c) \subset \mbox{par}\, \partial h(\bar c).
$
Hence the ``constraint qualification'' that we next consider, namely
\begin{equation} \label{li}
\mbox{par}\, \partial h(\bar c) \cap \mbox{\rm Null}(\nabla c(\bar x)^*) ~=~ \{0\}
\end{equation}
implies the transversality condition (\ref{cq}).  

Condition \eqnok{li} is related to the linear independence constraint
qualification in nonlinear programming.  To illustrate, consider again
the case of Section \ref{poly}, where the function $h$ is finite and
polyhedral:
\[
h(c) = \max_{i \in I} \{ \ip{h_i}{c} + \beta_i \}
\]
for given vectors $h_i \in \R^m$ and scalars $\beta_i$.
Then, as we noted,
$
\partial h (\bar c) = \mbox{conv} \{ h_i : i \in \bar I\},
$
where $\bar I$ is the set of active indices, so
\[
\mbox{par}\, \partial h (\bar c) ~=~ 
\Big\{ \sum_{i \in \bar I} \lambda_i h_i :  \sum_{i \in \bar I} \lambda_i = 0 \Big\}.
\]
Thus condition \eqnok{li} states
\begin{equation} \label{exli}
\sum_{i \in \bar I} \lambda_i 
\left[
\begin{array}{c}
\nabla c(\bar x)^* h_i \\
1
\end{array}
\right]
=
\left[
\begin{array}{c}
0                      \\
0
\end{array}
\right]
~~\Leftrightarrow~~
\sum_{i \in \bar I} \lambda_i h_i = 0.
\end{equation}
By contrast, the linear independence constraint qualification for the
corresponding nonlinear program \eqnok{classical} at the point
$\big(\bar x,-h(\bar c)\big)$ is
\[
\sum_{i \in \bar I} \lambda_i 
\left[
\begin{array}{c}
\nabla c(\bar x)^* h_i \\
1
\end{array}
\right]
=
\left[
\begin{array}{c}
0                      \\
0
\end{array}
\right]
~~\Leftrightarrow~~
\lambda_i = 0 ~~(i \in \bar I),
\]
which is a stronger assumption than condition \eqnok{exli}.

We now prove a straightforward technical result that addresses two
issues: existence and boundedness of multipliers for the proximal
subproblem \eqnok{pls}, and the convergence of these multipliers to a
unique multiplier that satisfies criticality conditions for
\eqnok{hc}, when the constraint qualification \eqnok{li} is satisfied.
The argument is routine but, as usual, it simplifies considerably in
the case of $h$ locally Lipschitz (or in particular convex and
continuous) around the point $\bar c$, since then the horizon
subdifferential $\partial^{\infty} h$ is identically $\{0\}$ near
$\bar{c}$.

\begin{lemma} \label{lem:convergence}
Consider a function $h \colon \R^m \to \bar\R$ and a map $c \colon
\R^n \to \R^m$.  Suppose that $c$ is $\cC^2$ around the point $\bar x
\in \R^n$, that $h$ is prox-regular at the point $\bar c
= c(\bar x)$, and that the composite function $h \circ c$ is critical
at $\bar x$.  

When the transversality condition \eqnok{cq} holds, then for any
sequences $\mu_r>0$ and $x_r \to \bar x$ such that $\mu_r
|x_r-\bar{x}| \to 0$, and any sequence of critical points $d_r \in
\R^n$ for the corresponding proximal linearized subproblems
(\ref{pls})  satisfying the conditions
\[
d_r = O(|x_r - \bar{x}|) ~~\mbox{and}~~ 
h\big(c(x_r) + \nabla c(x_r) d_r\big) \to h(\bar c),
\]
there exists a bounded sequence of vectors $v_r \in
\R^m$ that satisfy
\begin{subequations} \label{eq:hxmopt.r}
\begin{align} 
\label{eq:hxmopt.r.a}
0   & =  \nabla c(x_r)^* v_r + \mu_r d_r, \\
\label{eq:hxmopt.r.b}
v_r & \in \partial h\big(c(x_r) + \nabla c(x_r) d_r\big).
\end{align}
\end{subequations}
When the stronger constraint qualification (\ref{li}) holds, in place
of \eqnok{cq}, the set of multipliers $v \in \R^m$ solving the
criticality condition \eqnok{hc_necc}, namely
\begin{equation} \label{mult}
\partial h(\bar c) \cap \mbox{\rm Null}(\nabla c(\bar x)^*)
\end{equation}
is in fact a singleton $\{\bar v\}$.  Furthermore, any sequence of
multipliers $\{ v_r \}$ satisfying the conditions above converges to
$\bar{v}$.
\end{lemma}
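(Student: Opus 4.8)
The plan is to obtain the multiplier vectors $v_r$ by applying the nonsmooth chain rule to the proximal linearized objective $h_{x_r,\mu_r}$ at each critical point $d_r$, to establish boundedness of $\{v_r\}$ by a horizon-subgradient argument, and then, under the stronger qualification \eqnok{li}, to identify the unique limit using outer semicontinuity of $\partial h$. Throughout I would normalize $\bar x = 0$, $\bar c = 0$, $h(0)=0$, and use the facts already recorded above: $\partial h(\bar c)$ is closed and convex with recession cone $\partial^\infty h(\bar c)$, whence $\partial^\infty h(\bar c) \subset \parsub\,\partial h(\bar c)$ and \eqnok{li} implies \eqnok{cq}.

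First I would verify that the chain rule \cite[Theorem~10.6]{Roc98} applies to $h_{x_r,\mu_r}$ at $d_r$. Writing $c_r := c(x_r) + \nabla c(x_r)d_r$, this requires the transversality condition at the perturbed data, namely $\partial^\infty h(c_r) \cap \mbox{\rm Null}(\nabla c(x_r)^*) = \{0\}$ for all large $r$. I would prove this by contradiction: a unit vector $w_r$ in this intersection for infinitely many $r$ would, on passing to a subsequence with $w_r \to w$ and using that $d_r \to 0$ (so $c_r \to \bar c$), the hypothesis $h(c_r) \to h(\bar c)$, the convergence $\nabla c(x_r) \to \nabla c(\bar x)$, and outer semicontinuity of the horizon subdifferential under $h$-attentive convergence, yield a unit vector $w \in \partial^\infty h(\bar c) \cap \mbox{\rm Null}(\nabla c(\bar x)^*)$, contradicting \eqnok{cq}. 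Granting this, and since $h$ is subdifferentially regular at $\bar c$, the chain rule gives $\partial h_{x_r,\mu_r}(d_r) = \nabla c(x_r)^*\partial h(c_r) + \mu_r d_r$; criticality $0 \in \partial h_{x_r,\mu_r}(d_r)$ then produces $v_r \in \partial h(c_r)$ with $0 = \nabla c(x_r)^* v_r + \mu_r d_r$, which is \eqnok{eq:hxmopt.r}.

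For boundedness of $\{v_r\}$, I would first note that $\mu_r d_r \to 0$, since $|d_r| = O(|x_r-\bar x|)$ and $\mu_r|x_r-\bar x| \to 0$. If $|v_r| \to \infty$ along a subsequence, put $\lambda_r = 1/|v_r| \to 0$ and pass to a further subsequence with $\lambda_r v_r \to w$, $|w|=1$; since $v_r \in \partial h(c_r)$ with $c_r \to \bar c$ and $h(c_r) \to h(\bar c)$, the definition of the horizon subdifferential gives $w \in \partial^\infty h(\bar c)$, while $\nabla c(\bar x)^* w = \lim \lambda_r \nabla c(x_r)^* v_r = -\lim \lambda_r \mu_r d_r = 0$, again contradicting \eqnok{cq}.

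Finally, under \eqnok{li}: the set \eqnok{mult} is nonempty by criticality and the chain rule at $\bar x$, and if $v_1,v_2$ both lie in \eqnok{mult} then convexity of $\partial h(\bar c)$ places the whole segment between them in $\partial h(\bar c)$, so $v_1 - v_2 \in \parsub\,\partial h(\bar c) \cap \mbox{\rm Null}(\nabla c(\bar x)^*) = \{0\}$; hence \eqnok{mult} is the singleton $\{\bar v\}$. Since $\{v_r\}$ is bounded, to prove $v_r \to \bar v$ it suffices that every cluster point is $\bar v$: if $v_{r_j} \to v_*$, outer semicontinuity of $\partial h$ (using $c_{r_j} \to \bar c$ and $h(c_{r_j}) \to h(\bar c)$) gives $v_* \in \partial h(\bar c)$, and $\nabla c(\bar x)^* v_* = -\lim \mu_{r_j} d_{r_j} = 0$, so $v_* \in \{\bar v\}$. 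The one place needing care is the stability claim for the transversality condition in the second paragraph — which is exactly where the hypotheses $h(c(x_r)+\nabla c(x_r)d_r) \to h(\bar c)$ and $\mu_r|x_r-\bar x|\to 0$ are consumed — while everything else is routine limiting arguments built on outer semicontinuity of $\partial h$ and $\partial^\infty h$.
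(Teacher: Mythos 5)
Your proposal is correct and follows essentially the same route as the paper's proof: stability of the transversality condition at the perturbed data via outer semicontinuity of $\partial^{\infty}h$ under $h$-attentive convergence, the chain rule to extract the multipliers, a normalization/horizon-subgradient argument for boundedness, and outer semicontinuity of $\partial h$ plus the constraint qualification \eqnok{li} for the singleton and the convergence $v_r \to \bar v$. The only difference is that you spell out explicitly why \eqnok{mult} is a singleton (via $\parsub\,\partial h(\bar c)\cap\mbox{\rm Null}(\nabla c(\bar x)^*)=\{0\}$), a step the paper leaves implicit.
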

\begin{proof}
We first assume \eqnok{cq}, and claim that 
\begin{equation} \label{lem:conv:1}
\partial^{\infty} h \big(c(x_r) + \nabla c(x_r) d_r\big) 
\cap \mbox{\rm Null}(\nabla c(x_r)^*) ~=~ \{0\}
\end{equation}
for all large $r$.  Indeed, if this property should fail, then for
infinitely many $r$ there would exist a unit vector $v_r$ lying in the
intersection on the left-hand side, and any accumulation point of these unit
vectors must lie in the set
\begin{equation} \label{zero}
\partial^{\infty} h (\bar c) \cap \mbox{\rm Null}(\nabla c(\bar x)^*),
\end{equation}
by outer semicontinuity of the set-valued mapping $\partial^{\infty}
h$ at the point $\bar c$ \cite[Proposition~8.7]{Roc98}, contradicting
the transversality condition (\ref{cq}).  As a consequence, we can
apply the chain rule \cite[Theorem~10.6]{Roc98} 
to deduce the existence of vectors $v_r \in \R^n$
satisfying (\ref{eq:hxmopt.r}).  This sequence must be bounded, since
otherwise, after taking a subsequence, we could suppose $|v_r| \to
\infty$ and then any accumulation point of the unit vectors $|v_r|^{-1} v_r$
would lie in the set \eqnok{zero}, again contradicting the
transversality condition.  The first claim of the theorem is proved.

For the remaining claims, note first that the
chain rule implies that the set \eqnok{mult} is nonempty.  The
constraint qualification (\ref{li}) then implies that this set is a
singleton $\{ \bar{v} \}$. Using boundedness of $\{ v_r\}$, and the
fact that $\mu_r d_r \to 0$, we have by taking limits in
(\ref{eq:hxmopt.r}) that any accumulation point of $\{v_r\}$ lies in
(\ref{mult}) (by $h$-attentive outer semicontinuity of $\partial h$ at
$\bar{c}$), and therefore $v_r \to \bar{v}$.
\end{proof}


Using Theorem~\ref{th:proxstep}, we show that the local minimizers of
$h_{x_r,\mu_r}$ satisfy the desired properties, and in addition give a
strict improvement over $0$ in the subproblem (\ref{pls}).

\begin{lemma} \label{lem:drbetter} Consider a function $h \colon \R^m
  \to \bar\R$ and a map $c \colon \R^n \to \R^m$.  Suppose that $c$ is
  $\cC^2$ around the point $\bar x \in \R^n$, that $h$ is prox-regular
  at the point $\bar c = c(\bar x)$, that the composite function $h
  \circ c$ is critical at $\bar x$, and that the transversality
  condition \eqnok{cq} holds.  Then there is a constant $\bar\mu \ge
  0$ with the following property. If $\mu_r > \bar\mu$ and $x_r \to
  \bar{x}$ are sequences such that $\mu_r |x_r-\bar{x}| \to 0$, then
  for all $r$ sufficiently large, we have the following.
\begin{itemize}
\item[(a)] There is a local minimizer $d_r$ of $h_{x_r,\mu_r}$ such
  that
\begin{equation} \label{dr.propA}
d_r = O(|x_r-\bar{x}|) \quad \mbox{\rm and} \quad 
h(c(x_r)+\nabla c(x_r)d_r) \to h(\bar{c}).
\end{equation}
\item[(b)] If $0 \notin \partial (h \circ c)(x_r)$ for all $r$, then
  $d_r \neq 0$ and
\begin{equation} \label{eq:hxm.decr}
h_{x_r,\mu_r}(d_r) < h_{x_r,\mu_r}(0)
\end{equation}
for all $r$ sufficiently large.
\end{itemize}
\end{lemma}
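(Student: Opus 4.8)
\emph{The estimate \eqnok{dr.propA}.} This is read off from Theorem~\ref{th:proxstep}, applied to the present $h$ and $c$. That theorem supplies the number $\bar\mu$ we have fixed, together with a radius $\delta>0$, a constant $\rho\ge0$, and a map $d(\cdot,\cdot)$; for all large $r$ we have $x_r\in B_\delta(\bar x)$ and $\mu_r>\bar\mu$, so with $d_r:=d(x_r,\mu_r)$, part~(a) gives a local minimizer of $h_{x_r,\mu_r}$ with $|d_r|\le\rho|x_r-\bar x|$, hence $d_r=O(|x_r-\bar x|)$. Since $\mu_r|x_r-\bar x|\to0$ and $|x_r-\bar x|\to0$, we get $\mu_r|x_r-\bar x|^2\to0$, so part~(b) gives $h\big(c(x_r)+\nabla c(x_r)d_r\big)\to h(\bar c)$, which is \eqnok{dr.propA}.

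\emph{Reduction of the ``moreover'' part.} Assume $0\notin\partial(h\circ c)(x_r)$ for every $r$. From the construction in the proof of Theorem~\ref{th:proxstep}, $d_r$ is a minimizer of $h_{x_r,\mu_r}$ over a ball about the origin that contains the origin, so $h_{x_r,\mu_r}(d_r)\le h_{x_r,\mu_r}(0)$ for all large $r$. Let $R$ be the set of large indices at which equality holds here. Since $d_r=0$ forces $r\in R$, it is enough to show $R$ is finite: for large $r\notin R$ we then have $d_r\ne0$ together with the strict inequality \eqnok{eq:hxm.decr}.

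\emph{Finiteness of $R$, and the main obstacle.} Suppose $R$ is infinite and work along $r\in R$. Then $0$ also minimizes $h_{x_r,\mu_r}$ over the ball, hence, being an interior point, is an unconstrained local minimizer and in particular a critical point of $h_{x_r,\mu_r}$. Moreover $h(c(x_r))=h_{x_r,\mu_r}(0)=h_{x_r,\mu_r}(d_r)=h\big(c(x_r)+\nabla c(x_r)d_r\big)+\frac{\mu_r}{2}|d_r|^2\to h(\bar c)$, using \eqnok{dr.propA} and $\mu_r|d_r|^2\le\rho^2\mu_r|x_r-\bar x|^2\to0$. Lemma~\ref{lem:convergence}, applied along the subsequence $R$ with $0$ in the role of the critical point $d_r$, now produces a bounded sequence $v_r\in\partial h(c(x_r))$ with $\nabla c(x_r)^*v_r=0$; passing to a subsequence, $v_r\to\hat v$, and outer semicontinuity of $\partial h$ (using $c(x_r)\to\bar c$ and $h(c(x_r))\to h(\bar c)$) gives $\hat v\in\partial h(\bar c)$. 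Here comes the crux: since $h$ is prox-regular at $\bar c$, it is prox-regular there for $\hat v$, so for some $\rho'>0$ and all large $r\in R$ we have $h(c')\ge h(c(x_r))+\langle v_r,c'-c(x_r)\rangle-\frac{\rho'}{2}|c'-c(x_r)|^2$ for all $c'$ near $c(x_r)$; dividing by $|c'-c(x_r)|$ and letting $c'\to c(x_r)$ shows that $v_r$ lies in the \emph{regular} subdifferential $\hat\partial h(c(x_r))$, not merely in $\partial h(c(x_r))$. The elementary inclusion $\nabla c(x_r)^*\hat\partial h(c(x_r))\subseteq\hat\partial(h\circ c)(x_r)\subseteq\partial(h\circ c)(x_r)$ then yields $0=\nabla c(x_r)^*v_r\in\partial(h\circ c)(x_r)$, a contradiction. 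I expect the difficulty to be exactly this promotion of the multiplier $v_r$ from the limiting to the regular subdifferential: the limiting chain rule alone only gives $\partial(h\circ c)(x_r)\subseteq\nabla c(x_r)^*\partial h(c(x_r))$, the wrong direction, so prox-regularity of $h$ at $\bar c$ must be invoked, together with the boundedness of $\{v_r\}$ that the transversality condition \eqnok{cq} secures through Lemma~\ref{lem:convergence}. Everything else is bookkeeping with Theorem~\ref{th:proxstep} and Lemma~\ref{lem:convergence}.
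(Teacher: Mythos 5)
Your treatment of \eqnok{dr.propA} is exactly the paper's: read parts (a) and (b) of Theorem~\ref{th:proxstep} off at $(x_r,\mu_r)$, noting $\mu_r|x_r-\bar x|^2\to 0$. For the ``moreover'' part you take a genuinely different, but correct, route. The paper argues directly and quantitatively: Lemma~\ref{lem:convergence} supplies multipliers $v_r\in\partial h\big(c(x_r)+\nabla c(x_r)d_r\big)$ with $\nabla c(x_r)^*v_r=-\mu_r d_r$ (so $d_r=0$ would force $0\in\nabla c(x_r)^*\partial h(c(x_r))\subseteq\partial(h\circ c)(x_r)$, the inclusion justified by subdifferential regularity of $h$), and then applies the prox-regularity inequality of Definition~\ref{def:proxreg} at the base point $c(x_r)+\nabla c(x_r)d_r$ with subgradient $v_r$, compared against $c'=c(x_r)$; combined with the stationarity relation this yields
\[
h_{x_r,\mu_r}(0)~\ge~ h_{x_r,\mu_r}(d_r)+\tfrac12\big(\mu_r-\rho\|\nabla c(x_r)\|^2\big)|d_r|^2,
\]
which is strict since $\mu_r>\bar\mu>\rho\|\nabla c(\bar x)\|^2$. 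You instead obtain only the non-strict inequality from global minimality of $d_r$ over the ball $B_{\delta_1}(0)$ --- a fact drawn from the construction inside the proof of Theorem~\ref{th:proxstep} rather than from its statement, which is legitimate here but worth flagging --- and then rule out equality for infinitely many $r$ by noting that equality makes $0$ an unconstrained local minimizer, hence a critical point, of the subproblem, and running the multiplier argument at $d_r=0$. Your explicit promotion of $v_r$ from the limiting to the regular subdifferential via the prox-regularity inequality, followed by the elementary inclusion $\nabla c(x_r)^*\hat\partial h(c(x_r))\subseteq\hat\partial(h\circ c)(x_r)$, is a careful justification of the step the paper dispatches with the phrase ``by subdifferential regularity of $h$'': the needed inclusion is at the perturbed points $c(x_r)$, not at $\bar c$, and prox-regularity at $\bar c$ is precisely what makes the relevant nearby subgradients regular. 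What your version gives up is the quantitative decrease margin $\tfrac12(\mu_r-\rho\|\nabla c(x_r)\|^2)|d_r|^2$, which the lemma's statement does not require (the global convergence proof of Theorem~\ref{th:glob} extracts its quantitative decrease from $h_{x,\mu}(d)\le h_{x,\mu}(0)$ directly), so nothing downstream is lost.
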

\begin{proof}
Part (a) follows from parts (a) and (b) of Theorem~\ref{th:proxstep}
when we choose $\bar\mu$ as in that theorem and set $d_r =
d(x_r,\mu_r)$.

For part (b), we have from \eqnok{dr.propA} and
Lemma~\ref{lem:convergence} that there exists $v_r$ satisfying
\eqnok{eq:hxmopt.r}. If we were to have $d_r=0$, these conditions
would reduce to
$
\nabla c(x_r)^* v_r = 0
$
and
$ 
v_r \in \partial h\big(c(x_r)\big),
$
so that $0 \in \partial (h \circ c)(x_r)$, by subdifferential
regularity of $h$. Hence we must have $d_r \neq 0$.
To prove \eqnok{eq:hxm.decr}, suppose for contradiction that there are
sequences $\mu_r$, $x_r$ with the assumed properties such that this
inequality does not hold for all $r$ sufficiently large. Without
losing generality, we can assume that \eqnok{eq:hxm.decr} fails to
hold for {\em every} $r$. By taking limits in \eqnok{eq:hxmopt.r} and
from boundedness of $\{ v_r \}$, we can assume without loss of
generality that $v_r \to \bar{v}$, for some $\bar{v}$ with $\nabla
c(\bar{x})^* \bar{v}=0$, $\bar{v} \in \partial h(\bar{c})$, where we
have used $h$-attentive outer semicontinuity of $\partial h (\cdot)$
to obtain the latter inclusion. Let $\rho$ be the constant from
Definition~\ref{def:proxreg} associated with $\bar{c}$ and $\bar{v}$,
and choose $\bar{\mu}$ such that $\bar{\mu} > \rho \| \nabla
c(\bar{x}) \|^2$. By prox-regularity, we have
\begin{alignat*}{2}
h\big(c(x_r)\big)                                   & \ge h(c(x_r)+\nabla c(x_r) d_r) + \ip{v_r}{-\nabla c(x_r) d_r}
- \frac{\rho}{2} |\nabla c(x_r) d_r|^2              &       & \\
                                                    & = h(c(x_r)+\nabla c(x_r) d_r)  + \mu_r |d_r|^2 - 
\frac{\rho}{2} |\nabla c(x_r) d_r|^2                & \quad & 
\mbox{\rm by (\ref{eq:hxmopt.r.a})}                           \\
                                                    & \ge h(c(x_r)+\nabla c(x_r) d_r)  + \frac{\mu_r}{2} |d_r|^2 +
\frac{\mu_r -\rho \| \nabla c(x_r) \|^2}{2} |d_r|^2 &       & \\
                                                    & = h_{x_r,\mu_r}(d_r)  +
\frac{\mu_r -\rho \| \nabla c(x_r) \|^2}{2} |d_r|^2 & \quad & 
\mbox{\rm by (\ref{pls})}                                     \\
                                                    & > h_{x_r,\mu_r}(d_r),
\end{alignat*}
where the final inequality holds because of our choice of
$\bar{\mu}$. Since $h_{x_r,\mu_r}(0) = h\big(c(x_r)\big)$, we have a
contradiction, and the proof is complete.
\end{proof}

Returning to the assumptions of Theorem~\ref{th:proxstep}, but now
with the constraint qualification \eqnok{li} replacing the weaker
transversality condition \eqnok{cq}, we can derive local uniqueness
results about critical points for the proximal linearized subproblem.
When the outer function $h$ is convex, uniqueness is obvious, since
then the proximal linearized objective $h_{\mu,x}$ is strictly convex
for any $\mu > 0$.  For lower $\cC^2$ functions, the argument is much
the same: such functions have the form $g - \kappa | \cdot |^2$,
locally, for some continuous convex function $g$,
so again $h_{\mu,x}$ is locally strictly convex for large $\mu$.  For
general prox-regular functions, the argument requires slightly more
care.

\begin{theorem}[unique step] \label{th:uscm} Consider a function $h
  \colon \R^m \rightarrow \bar\R$ and a map $c \colon \R^n \rightarrow
  \R^m$.  Suppose that $c$ is $\cC^2$ around the point $\bar x \in
  \R^n$, that $h$ is prox-regular at the point $\bar c = c(\bar x)$,
  and that the composite function $h \circ c$ is critical at $\bar x$.
  Suppose further that the constraint qualification (\ref{li})
  holds.  Then there exists  $\bar\mu \ge 0$ such that the
  following properties hold.  Given any sequence $\{ \mu_r \}$ with
  $\mu_r > \bar{\mu}$ for all $r$ and any sequence $x_r \to \bar{x}$
  such that $\mu_r |x_r-\bar{x}| \to 0$, there exists a sequence of
  local minimizers $d_r$ of $h_{x_r,\mu_r}$ and a corresponding
  sequence of multipliers $v_r$ with the following properties:
\begin{equation} \label{dr.prop} 
0 \in \partial h_{x_r,\mu_r}(d_r),~~ d_r =
O(|x_r-\bar{x}|), ~~\mbox{and}~~ h\big(c(x_r) + \nabla c(x_r) d_r\big)
\to h(\bar c), 
\end{equation}
as $r \to \infty$, and satisfying \eqnok{eq:hxmopt.r}, with $v_r \to
\bar{v}$, where $\bar{v}$ is the unique vector that solves the
criticality condition \eqnok{hc_necc}.  Moreover, $d_r$ is uniquely
defined for all $r$ sufficiently large.

In the case of a convex, lower semicontinuous function $h:\R^m \to
(-\infty,+\infty]$, the result holds with $\bar\mu = 0$.
\end{theorem}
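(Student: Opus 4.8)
The plan is to obtain the existence, size, limit, and multiplier-convergence assertions directly from Lemma~\ref{lem:drbetter} and Lemma~\ref{lem:convergence}, and to prove the new content---local uniqueness of $d_r$---from the two-sided inequality in the definition of prox-regularity (Definition~\ref{def:proxreg}). First I would fix the constant $\hat\mu$. Since $h$ is prox-regular at $\bar c$, pick a constant $\rho$ valid in Definition~\ref{def:proxreg} for the subgradient $\bar v$ that solves the criticality condition \eqnok{hc_necc} (enlarging $\rho$ if needed so that it also serves in \eqnok{pr.1}), and set $\hat\mu = \max\{\bar\mu,\ \rho\|\nabla c(\bar x)\|^2 + 1\}$, where $\bar\mu$ is the constant from Theorem~\ref{th:proxstep}. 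Because \eqnok{li} implies the transversality condition \eqnok{cq}, and because $\mu_r > \hat\mu \ge \bar\mu$ with $\mu_r|x_r-\bar x|\to 0$, Lemma~\ref{lem:drbetter} applies: for all large $r$ there is a local minimizer $d_r$ of $h_{x_r,\mu_r}$ with $d_r = O(|x_r-\bar x|)$ and $h\big(c(x_r)+\nabla c(x_r)d_r\big)\to h(\bar c)$. At a local minimizer Fermat's rule gives $0\in\partial h_{x_r,\mu_r}(d_r)$, so \eqnok{dr.prop} holds.

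Next I would invoke Lemma~\ref{lem:convergence}, now under the stronger constraint qualification \eqnok{li}: it provides a bounded sequence of multipliers $v_r$ satisfying \eqnok{eq:hxmopt.r}, shows that the set \eqnok{mult} reduces to a single vector $\bar v$, and yields $v_r\to\bar v$. This disposes of every claimed property except uniqueness of $d_r$.

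For uniqueness, suppose $d_r$ and $d_r'$ are two sequences of local minimizers of $h_{x_r,\mu_r}$ each satisfying \eqnok{dr.prop}. Applying Lemma~\ref{lem:convergence} to each produces multiplier sequences $v_r\to\bar v$ and $v_r'\to\bar v$ with, writing $z_r := c(x_r)+\nabla c(x_r)d_r$ and $z_r' := c(x_r)+\nabla c(x_r)d_r'$, the relations $v_r\in\partial h(z_r)$, $v_r'\in\partial h(z_r')$, $\nabla c(x_r)^*v_r+\mu_r d_r=0$, and $\nabla c(x_r)^*v_r'+\mu_r d_r'=0$. Since $d_r,d_r'\to 0$ we have $z_r,z_r'\to\bar c$ and, by \eqnok{dr.prop}, $h(z_r),h(z_r')\to h(\bar c)$; combined with $v_r,v_r'\to\bar v$, all hypotheses of the inequality in Definition~\ref{def:proxreg} are met for $r$ large. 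Writing that inequality once for the pair $(z_r,v_r)$ with comparison point $z_r'$, once for $(z_r',v_r')$ with comparison point $z_r$, and adding, yields $\ip{v_r-v_r'}{z_r-z_r'}\ge -\rho|z_r-z_r'|^2$. Substituting $z_r-z_r'=\nabla c(x_r)(d_r-d_r')$ and $\nabla c(x_r)^*(v_r-v_r')=-\mu_r(d_r-d_r')$ gives $-\mu_r|d_r-d_r'|^2 \ge -\rho\|\nabla c(x_r)\|^2|d_r-d_r'|^2$. Since $\mu_r>\hat\mu>\rho\|\nabla c(\bar x)\|^2$ and $\|\nabla c(x_r)\|\to\|\nabla c(\bar x)\|$, we have $\mu_r>\rho\|\nabla c(x_r)\|^2$ for large $r$, which forces $d_r=d_r'$. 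Finally, when $h$ is convex and lower semicontinuous, $h_{x,\mu}$ is strictly convex for every $\mu>0$, hence has at most one (global) minimizer, characterized by $0\in\partial h_{x,\mu}$; combining this with part~(c) of Theorem~\ref{th:proxstep} and the convex cases of Lemmas~\ref{lem:drbetter} and \ref{lem:convergence} gives the conclusion with $\hat\mu=0$.

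I expect the only real delicacy to be bookkeeping rather than a genuine obstacle: one must verify that the two candidate sequences actually lie in the neighborhoods on which Definition~\ref{def:proxreg}'s inequality is valid---in particular that \emph{both} multiplier sequences converge to $\bar v$, which is exactly what strengthening \eqnok{cq} to \eqnok{li} buys through Lemma~\ref{lem:convergence}---and that the single constant $\hat\mu$ simultaneously exceeds the threshold $\bar\mu$ needed to invoke the earlier results and dominates $\rho\|\nabla c(\bar x)\|^2$ needed for the contraction estimate, absorbing into $\rho$ whatever prox-regularity constant is required.
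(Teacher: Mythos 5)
Your proposal is correct and follows essentially the same route as the paper: existence, size, and the limit \eqnok{dr.prop} come from Theorem~\ref{th:proxstep} (via Lemma~\ref{lem:drbetter}), the multipliers and their convergence to the unique $\bar v$ come from Lemma~\ref{lem:convergence} under \eqnok{li}, and uniqueness follows from hypomonotonicity of the localized subdifferential combined with the first-order conditions $\nabla c(x_r)^*v_r+\mu_r d_r=0$. The only difference is that the paper cites the hypomonotone localization of $\partial h$ from Rockafellar--Wets, whereas you derive the same inequality $\ip{v_r-v_r'}{z_r-z_r'}\ge-\rho|z_r-z_r'|^2$ directly by symmetrizing the defining inequality of Definition~\ref{def:proxreg}; this is a valid, slightly more self-contained justification of the same step.
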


\begin{proof}
  Existence of sequences $\{d_r\}$ and $\{v_r\}$ with the claimed
  properties follows from Theorem~\ref{th:proxstep} and
  Lemma~\ref{lem:convergence}, where we select $\bar\mu$ in the same
  way as in Theorem~\ref{th:proxstep}. We need only prove the claim
  about uniqueness of the vectors $d_r$, and the final claim about the
  special case of $h$ convex and lower semicontinuous.


We first show the uniqueness of $d_r$ in the general case. Since the
function $h$ is prox-regular at $c(\bar x)$, its subdifferential
$\partial h$ has a hypomonotone localization around the point
$(c(\bar x),\bar v)$ with constant $\rho > 0$ (see the Appendix).  
If the uniqueness claim does not hold, we have by taking a subsequence
if necessary that there is a sequence $x_r \to \bar x$ and distinct
sequences of $d_r^1 \ne d_r^2$ in $\R^n$ satisfying the conditions
\[
0 \in \partial h_{x_r,\mu_r}(d_r^i),~~ d_r^i = O(|x_r-\bar{x}|) \to 0, 
~~\mbox{and}~~ 
h\big(c(x_r) + \nabla c(x_r) d_r^i\big) \to h\big(c(\bar x)\big),
\]
as $r \to \infty$, for $i=1,2$.  Lemma \ref{lem:convergence} shows
the existence of sequences of vectors $v_r^i \in \R^n$ satisfying
\begin{eqnarray*}
0                                        & =   & \nabla c(x_r)^* v_r^i + \mu_r d_r^i  \\
v_r^i                                    & \in & \partial h\big(c(x_r) + \nabla c(x_r) d_r^i\big),
\end{eqnarray*}
for all large $r$, and furthermore $v_r^i \to \bar v$ for each
$i=1,2$.  Consequently, for all large $r$ we have
\[
v_r^i \in T \big( c(x_r) + \nabla c(x_r) d_r^i \big)  ~~\mbox{for}~i=1,2,
\]
so that
\[
-\mu_r | d_r^1 - d_r^2 |^2 ~=~ 
\ip{v_r^1 - v_r^2}{\nabla c(x_r)(d_r^1 - d_r^2)} ~\ge~ -\rho |\nabla c(x_r)(d_r^1 - d_r^2)|^2.
\]
Since $\bar{\mu} > \rho \| \nabla c(\bar{x}) \|^2$, we
have the contradiction
$
\rho \|\nabla c(x_r)\|^2 \ge \mu_r > \bar{\mu} > \rho \|\nabla c(\bar x)\|^2
$ 
for all large $r$.

For the special case of $h$ convex and lower semicontinuous, we have
from Theorem~\ref{th:proxstep}(c) that unique $d_r$ with the
properties \eqnok{dr.prop} exists, for $\bar{\mu}=0$.
\end{proof}

\subsection{Manifold Identification}
\label{sec:id}

We next work toward the identification result.  Consider a sequence of
points $\{x_r \}$ in $\R^n$ converging to the critical point $\bar{x}$
of the composite function $h \circ c$, and let $\mu_r$ be a sequence
of positive proximality parameters.  Suppose now that the outer
function $h$ is partly smooth at the point $\bar c = c(\bar{x}) \in
\R^m$ relative to some manifold $\cM \subset \R^m$.  Our aim is to
find conditions guaranteeing that the update to the point $c(x_r)$
predicted by minimizing the proximal linearized objective
$h_{x_r,\mu_r}$ lies on $\cM$: in other words,
\[
c(x_r) + \nabla c(x_r) d_r \in \cM ~~\mbox{for all large $r$,}
\]
where $d_r$ is the unique small critical point of
$h_{x_r,\mu_r}$.  We would furthermore like to ensure that the ``efficient projection'' $\xnew$ resulting from this prediction, guaranteed by Theorem \ref{th:feasproj} (linear estimator improvement), satisfies 
$c(\xnew) \in \cM$.

To illustrate, we return to our ongoing example from Section
\ref{poly}, the finite polyhedral function
(\ref{eq:hpoly}).  If $\bar I$ is the active index set corresponding
to the point $\bar c$, then it is easy to check that $h$ is partly
smooth relative to the manifold
\[
\cM ~=~ 
\big\{ 
c : \ip{h_i}{c} + \beta_i = \ip{h_j}{c} + \beta_j ~\mbox{for all}~i,j \in \bar I 
\big\}.
\]

Our analysis requires one more assumption, in addition to those of
Theorem~\ref{th:uscm}.  The basic criticality condition
\eqnok{hc_necc} requires the existence of a multiplier vector:
\[
\partial h(\bar c) \cap \mbox{\rm Null}(\nabla c(\bar x)^*) ~\ne~ \emptyset.
\]
We now strengthen this assumption slightly, to a ``strict''
criticality condition:
\begin{equation} \label{ri}
\ri\big(\partial h(\bar c)\big) \cap \mbox{\rm Null}(\nabla c(\bar x)^*) ~\neq~ 
\emptyset,
\end{equation}
where $\ri$ denotes the relative interior of a convex set.  The
condition \eqnok{ri} is related to the strict complementarity
assumption in nonlinear programming.  For finite polyhedral $h$
\eqnok{eq:hpoly}, since $\partial h(\bar c) = \mbox{conv}\{ h_i : i
\in \bar I\}$, we have
\[
\ri \big( \partial h(\bar c) \big) ~=~ 
\Big\{ 
\sum_{i \in \bar I} \lambda_i h_i : \sum_{i \in \bar I} \lambda_i = 1,~ \lambda > 0 
\Big\}.
\]
Hence, the strict criticality condition \eqnok{ri} becomes
the existence of a vector $\lambda \in \R^{\bar I}$ satisfying
\begin{equation}
\lambda > 0 
~~~\mbox{and}~~~
\sum_{i \in \bar I} \lambda_i 
\left[
\begin{array}{c}
\nabla c(\bar x)^* h_i \\
1
\end{array}
\right]
=
\left[
\begin{array}{c}
0                      \\
1
\end{array}
\right].
\end{equation}
The only change from the corresponding basic criticality condition \eqnok{first} is that the condition $\lambda \ge 0$ has been strengthened to $\lambda > 0$, corresponding exactly to the extra requirement of strict complementarity in the nonlinear programming formulation \eqnok{nlp}.

Recall that the constraint qualification \eqnok{li} implies the uniqueness of the multiplier vector $\bar v$, by Lemma~\ref{lem:convergence}.  Assuming in addition the strict criticality condition \eqnok{ri}, we then have 
\[
\bar{v} \in \ri\big(\partial h(\bar c)\big) \cap \mbox{\rm Null}(\nabla c(\bar x)^*).  
\]


We now prove a trivial modification of \cite[Theorem~5.3]{HarL04}.
\begin{theorem} \label{th:HL5.3}
Suppose the function $h\colon \R^m \to \bar\R$ is partly smooth at the point
$\bar{c} \in \R^m$ relative to the manifold $\cM \subset \R^m$, and is prox-regular there.
Consider a subgradient $\bar{v} \in \ri \, \partial h(\bar{c})$.  Suppose the sequence 
$\{ \hat{c}_r \} \subset \R^m$ satisfies $\hat{c}_r \to \bar{c}$ and 
$h(\hat{c}_r) \to h(\bar{c})$.  Then
$\hat{c}_r \in \cM$ for all large $r$ if and only if
$\mbox{\rm dist} \big(\bar{v}, \partial h(\hat{c}_r)\big) \to 0$.
\end{theorem}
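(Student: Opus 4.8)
The plan is to prove the two implications separately: the ``only if'' direction is immediate from the structure of partial smoothness, while the ``if'' direction carries the real content and will be obtained essentially from the Hare--Lewis identification theorem \cite[Theorem~5.3]{HarL04}, after a small reformulation of its hypothesis.

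For the ``only if'' direction I would suppose $\hat{c}_r \in \cM$ for all large $r$. The smoothness property (i) of Definition~\ref{def:ps} already forces $h(\hat{c}_r) \to h(\bar{c})$, consistently with the standing hypothesis. Then I invoke the sub-continuity property (iv): the set-valued map $\partial h \colon \cM \tto \R^m$ is continuous at $\bar{c}$, hence in particular inner semicontinuous there. Since $\bar{v} \in \ri\,\partial h(\bar{c}) \subset \partial h(\bar{c})$, inner semicontinuity produces subgradients $v_r \in \partial h(\hat{c}_r)$ with $v_r \to \bar{v}$, whence $\dist\big(\bar{v}, \partial h(\hat{c}_r)\big) \le |v_r - \bar{v}| \to 0$.

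For the ``if'' direction, suppose $\dist\big(\bar{v}, \partial h(\hat{c}_r)\big) \to 0$. For all large $r$ this distance is finite, so $\partial h(\hat{c}_r) \neq \emptyset$; being a limiting subdifferential it is closed, so I may choose $v_r \in \partial h(\hat{c}_r)$ attaining the distance, and then $v_r \to \bar{v}$. Now $(\hat{c}_r, v_r) \to (\bar{c}, \bar{v})$ with $v_r \in \partial h(\hat{c}_r)$, $h(\hat{c}_r) \to h(\bar{c})$, $\bar{v} \in \ri\,\partial h(\bar{c})$, and $h$ is partly smooth at $\bar{c}$ relative to $\cM$ and prox-regular there --- exactly the setting of \cite[Theorem~5.3]{HarL04}, which gives $\hat{c}_r \in \cM$ for all large $r$. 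The only point of departure from the cited statement is that our hypothesis is phrased through $\dist\big(\bar{v}, \partial h(\hat{c}_r)\big) \to 0$ rather than a prescribed subgradient sequence converging to $\bar{v}$; the selection just made shows the two formulations are equivalent, which is what makes the modification ``trivial''.

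The main obstacle, were one to insist on a self-contained argument, is precisely the ``if'' direction: one must rule out that a subgradient in the relative interior of $\partial h(\bar{c})$ is approached by subgradients taken at points off $\cM$. This is where the sharpness condition (iii) of Definition~\ref{def:ps} together with prox-regularity enter, confining $\partial h$ near $\bar{c}$ off the manifold so that it cannot reach into $\ri\,\partial h(\bar{c})$; carrying this through is exactly the content of \cite{HarL04}, so I would cite it rather than reproduce it. An alternative route would pass to the epigraph via Theorem~\ref{th:HL5.1} and a set-level identification statement, but the direct function-level argument above is shorter and self-explanatory.
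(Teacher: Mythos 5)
Your overall strategy coincides with the paper's: both directions are handled by leaning on Hare--Lewis \cite[Theorem~5.3]{HarL04}, and your ``only if'' argument via the sub-continuity axiom (inner semicontinuity of $\partial h|_{\cM}$ at $\bar c$) is fine, as is the observation that the distance condition and the existence of a subgradient selection $v_r \to \bar v$ are interchangeable because each $\partial h(\hat c_r)$ is closed.

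The gap is in what you identify as the ``only point of departure'' from the cited result. Hare--Lewis Theorem~5.3 is stated for the \emph{critical} case: its hypothesis is $0 \in \ri\,\partial h(\bar c)$ and its equivalence is with $\dist\big(0,\partial h(\hat c_r)\big) \to 0$; its proof passes to the epigraph and uses the linear functional $g(c,r)=r$, whose gradient detects the subgradient $0$. So the setting of the present theorem, with a general $\bar v \in \ri\,\partial h(\bar c)$, is \emph{not} literally covered by the citation, and the distance-versus-sequence rephrasing is not the modification that is needed. The real (and only) content of this theorem is the extension from $\bar v = 0$ to arbitrary $\bar v$, which your proposal silently assumes. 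The repair is immediate and is exactly the ``trivial modification'' the paper records: either redo the Hare--Lewis argument with $g(c,r) = r - c^T\bar v$ in place of $g(c,r)=r$ (the paper's route), or, equivalently, apply their theorem verbatim to the tilted function $\tilde h := h - \ip{\bar v}{\cdot}$. Adding a linear function preserves partial smoothness relative to $\cM$ and prox-regularity at $\bar c$, and $\partial \tilde h = \partial h - \bar v$, so $0 \in \ri\,\partial\tilde h(\bar c)$, $\tilde h(\hat c_r) \to \tilde h(\bar c)$, and $\dist\big(0,\partial\tilde h(\hat c_r)\big) = \dist\big(\bar v,\partial h(\hat c_r)\big)$; the conclusion then transfers unchanged. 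With that one sentence added, your proof is complete.
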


\begin{proof}
The proof proceeds exactly as in \cite[Theorem~5.3]{HarL04}, except
that instead of defining a function $g: \R^m \times \R \to \R$ by
$g(c,r) = r$, we set $g(c,r) = r - c^T \bar{v}$.
\end{proof}

We can now prove our main identification result.

\begin{theorem} \label{th:id1} Consider a function $h \colon \R^m
  \rightarrow \bar\R$, and a map $c \colon \R^n \rightarrow \R^m$ that
  is $\cC^2$ around the point $\bar x \in \R^n$.  Suppose that $h$ is
  prox-regular at the point $\bar c = c(\bar x)$, and partly smooth
  there relative to the manifold $\cM$.  Suppose further that the
  constraint qualification (\ref{li}) and the strict criticality
  condition \eqnok{ri} both hold for the composite function $h \circ
  c$ at $\bar x$.  Then there exist nonnegative constants $\hat \mu$ and
  $\gamma$ with the following property. Given any sequence $\{
  \mu_r \}$ with $\mu_r > \hat{\mu}$ for all $r$, and any sequence
  $x_r \to \bar{x}$ such that $\mu_r |x_r-\bar{x}| \to 0$, the local
  minimizer $d_r$ of $h_{x_r,\mu_r}$ defined in Theorem~\ref{th:uscm}
  satisfies, for all large $r$, the condition
\begin{equation} \label{identification}
c(x_r) + \nabla c(x_r) d_r \in \cM, 
\end{equation}
and also the inequalities
\begin{equation} \label{correction}
| \xsupnew_r - (x_r + d_r) | \le \gamma |d_r|^2
~~\mbox{and}~~
h \big( c(\xsupnew_r) \big) \le
h \big( c(x_r) + \nabla c(x_r) d_r \big) + \gamma|d_r|^2,
\end{equation}
hold for some point $\xsupnew_r$ with 
$c(\xsupnew_r) \in \cM$.

In the special case when $h:\R^m \to (-\infty,+\infty]$ is convex and
  lower semicontinuous, the result holds with $\hat\mu = 0$.
\end{theorem}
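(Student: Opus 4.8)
The plan is to read off the identification \eqnok{identification} from the convergence of the multipliers together with the set-identification criterion in Theorem~\ref{th:HL5.3}, and then to obtain \eqnok{correction} by rerunning the proof of Theorem~\ref{th:feasproj} with $\mbox{epi}\,h$ replaced by the graph of $h$ over the active manifold.

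First I would apply Theorem~\ref{th:uscm}, taking $\hat\mu$ as defined there (and $\hat\mu=0$ when $h$ is convex and lower semicontinuous). For the given sequences it produces local minimizers $d_r$ of $h_{x_r,\mu_r}$ and multipliers $v_r$ satisfying \eqnok{eq:hxmopt.r}, with $d_r = O(|x_r-\bar x|)\to 0$, with $h\big(c(x_r)+\nabla c(x_r)d_r\big)\to h(\bar c)$, and with $v_r\to\bar v$, the unique solution of the criticality condition \eqnok{hc_necc}. The strict criticality condition \eqnok{ri} together with this uniqueness forces $\bar v\in\ri\,\partial h(\bar c)$. Writing $\hat c_r := c(x_r)+\nabla c(x_r)d_r$, continuity of $c$ gives $\hat c_r\to\bar c$, while \eqnok{eq:hxmopt.r.b} gives $v_r\in\partial h(\hat c_r)$ and hence $\dist\big(\bar v,\partial h(\hat c_r)\big)\le|v_r-\bar v|\to 0$. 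Since in addition $h(\hat c_r)\to h(\bar c)$, Theorem~\ref{th:HL5.3} applies and yields $\hat c_r\in\cM$ for all large $r$, which is exactly \eqnok{identification}.

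For \eqnok{correction}, I would mimic the proof of Theorem~\ref{th:feasproj}, but replacing $\mbox{epi}\,h$ throughout by the set $\cN := \{(c,h(c)) : c\in\cM\}$ (intersected, if necessary, with a small closed ball about $(\bar c,h(\bar c))$ so as to be closed). Since $h|_{\cM}$ is $\cC^2$ near $\bar c$ by partial smoothness, $\cN$ is a $\cC^2$ manifold about $(\bar c,h(\bar c))$. Using $F(x,t)=(c(x),t)$ as in that proof, the transversality hypothesis of Theorem~\ref{constraint} needed for the system $F(x,t)\in\cN$ is $N_{\cN}\big(\bar c,h(\bar c)\big)\cap\mbox{\rm Null}\big(\nabla F(\bar x,h(\bar c))^*\big)=\{0\}$. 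Because the first-coordinate projection maps the tangent space of $\cN$ at $(\bar c,h(\bar c))$ onto $T_{\cM}(\bar c)$, a vector of the form $(y,0)$ is normal to $\cN$ there if and only if $y\in N_{\cM}(\bar c)$; and by the sharpness property (iii) of Definition~\ref{def:ps} we have $N_{\cM}(\bar c)=\mbox{\rm par}\,\partial h(\bar c)$. Since also $\mbox{\rm Null}\big(\nabla F(\bar x,h(\bar c))^*\big)=\mbox{\rm Null}(\nabla c(\bar x)^*)\times\{0\}$, this transversality hypothesis reduces precisely to the constraint qualification \eqnok{li}. Theorem~\ref{constraint} then provides a constant $\kappa>0$ with $\dist\big((u,t),F^{-1}(\cN)\big)\le\kappa\cdot\dist\big(F(u,t),\cN\big)$ for all $(u,t)$ near $(\bar x,h(\bar c))$.

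Finally I would evaluate this estimate at $(u,t)=\big(x_r+d_r,\,h(\hat c_r)\big)$, which lies near $(\bar x,h(\bar c))$ for large $r$ since $x_r+d_r\to\bar x$ and $h(\hat c_r)\to h(\bar c)$. As $\hat c_r\in\cM$, the point $(\hat c_r,h(\hat c_r))$ lies in $\cN$, so $\dist\big(F(x_r+d_r,h(\hat c_r)),\cN\big)\le\big|c(x_r+d_r)-\hat c_r\big| = \big|c(x_r+d_r)-\big(c(x_r)+\nabla c(x_r)d_r\big)\big| = O(|d_r|^2)$ by the $\cC^2$ property of $c$. Hence for a suitable constant $\gamma$ there is a point $(\xsupnew_r,t_r)\in F^{-1}(\cN)$ with $|\xsupnew_r-(x_r+d_r)|\le\gamma|d_r|^2$ and $|t_r-h(\hat c_r)|\le\gamma|d_r|^2$; by the definition of $\cN$ this means $c(\xsupnew_r)\in\cM$ and $t_r=h\big(c(\xsupnew_r)\big)$, so $h\big(c(\xsupnew_r)\big)=t_r\le h(\hat c_r)+\gamma|d_r|^2=h\big(c(x_r)+\nabla c(x_r)d_r\big)+\gamma|d_r|^2$, establishing \eqnok{correction}. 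The convex case is identical, with $\hat\mu=0$. The substantive point in the whole argument is not the identification step, where Theorem~\ref{th:HL5.3} does the work once the multipliers are shown to converge into $\ri\,\partial h(\bar c)$, but rather the observation that the sharpness property of partial smoothness turns the constraint qualification \eqnok{li} into exactly the transversality condition required to run the metric-regularity machinery of Theorem~\ref{constraint} against the graph manifold $\cN$ rather than against $\mbox{epi}\,h$.
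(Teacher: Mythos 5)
Your proposal is correct and follows essentially the same route as the paper: identification via multiplier convergence into $\ri\,\partial h(\bar c)$ combined with Theorem~\ref{th:HL5.3}, and then the restoration estimate obtained by observing that sharpness turns the constraint qualification \eqnok{li} into the transversality condition needed to run the metric-regularity argument relative to the active manifold. The only cosmetic difference is in the second half: the paper packages this by defining $h_{\cM}$ (equal to $h$ on $\cM$, $+\infty$ off it, so that $\partial^{\infty}h_{\cM}(\bar c)=N_{\cM}(\bar c)=\parsub\,\partial h(\bar c)$) and invoking Theorem~\ref{th:feasproj} as a black box, whereas you rerun the underlying Theorem~\ref{constraint} computation directly against the graph $\{(c,h(c)):c\in\cM\}$ — the same normal-cone calculation in either guise.
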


\begin{proof}
Theorem~\ref{th:uscm} implies $d_r \to 0$, so 
$
\hat{c}_r = c(x_r) + \nabla c(x_r) d_r \to \bar c.
$
The theorem also shows $h(\hat{c}_r) \to h(\bar c)$, and that there exist multiplier vectors $v_r \in \partial h(\hat{c}_r)$ satisfying
$
v_r \to \bar v \in \ri\, \partial h(\bar c).
$
Since
$
\mbox{dist} \big(\bar{v}, \partial h(\hat{c}_r)\big) \le 
| \bar{v} - v_r | \to 0,
$
we can apply Theorem~\ref{th:HL5.3} to obtain property (\ref{identification}).

Let us now define a function $h_{\cM} \colon \R^m \to \bar \R$, agreeing with $h$ on the manifold $\cM$ and taking the value $+\infty$ elsewhere.  By partial smoothness, $h_{\cM}$ is the sum of a smooth function and the indicator function of $\cM$, and hence 
$\partial^{\infty} h_{\cM}(\bar c) = N_{\cM}(\bar c)$.  Partial smoothness also implies
$\mbox{par}\big( \partial h(\bar c) \big) = N_{\cM}(\bar c)$.  We can therefore rewrite the
constraint qualification (\ref{li}) in the form
$
\partial^{\infty} h_{\cM}(\bar c) \cap \mbox{Null} \big( \nabla c(\bar x)^* \big) = \{0\}.
$
This condition allows us to apply Theorem \ref{th:feasproj} (linear
estimator improvement), with the function $h_{\cM}$ replacing the
function $h$, to deduce the existence of the point $\xsupnew_r$, as
required.
\end{proof}

\section{A Proximal Descent Algorithm}
\label{sec:proxdesc}

We now describe Algorithm ProxDescent, a simple first-order algorithm
that manipulates the proximality parameter $\mu$ in (\ref{pls}) to
achieve a ``sufficient decrease'' in $h$ at each iteration. (This
algorithm is shown in the figure below as
Algorithm~\ref{alg:proxdescent}.) We follow our description with
results concerning the global convergence behavior of this method and
its ability to identify the manifold $\cM$ discussed in
Section~\ref{sec:id}.

\begin{algorithm}[ht!]
\caption{ProxDescent}
\label{alg:proxdescent}
\begin{algorithmic}
\STATE  Define constants $\tau>1$, $\sigma \in (0,1)$, 
and $\mumin > 0$; 
\STATE
 Choose $x_0 \in \R^n$, $\mu_0 \ge \mumin$; 
\STATE Set $\mu \leftarrow \mu_0$; 
\FOR{$k=0,1,2,\dotsc$}
\STATE Set accept $\leftarrow$ false; 
\WHILE{not accept}
\IF{$d=0$ is a local minimizer of \eqref{pls}}
\STATE Terminate with $\bar{x}=x_k$;
\ENDIF
\STATE Find a local minimizer $d$ of \eqnok{pls}  with $x=x_k$, that is
\[
\min_d \,  h_{x_k,\mu}(d) := 
h\big(c(x_k)+ \nabla c(x_k) d\big) + \frac{\mu}{2} |d|^2,
\]
 such that $h_{x_k,\mu}(d)  < h_{x_k,\mu}(0)$;
\IF{no such $d$ exists}
\STATE $\mu \leftarrow \tau \mu$;
\ELSE
\STATE Derive $x^+$ from $x_k+d$ (by an efficient projection and/or 
 other enhancements); 
\IF{$h\big(c(x_k)\big) - h\big(c(x^+)\big) \ge \sigma  
\left[ h\big(c(x_k)\big) - h(c(x_k)+\nabla c(x_k) d) \right]$ \\
\quad\quad\quad and $|x^+-(x_k+d)| \le \half |d|$}
\STATE $x_{k+1} \leftarrow x^+$;
\STATE $d_k \leftarrow d$; 
\STATE $\mu_k \leftarrow \mu$; 
\STATE $\mu \leftarrow \max(\mumin, \mu/\tau)$; 
\STATE accept $\leftarrow$ true;                              \\
\ELSE
\STATE $\mu  \leftarrow \tau \mu$;
\ENDIF
\ENDIF
\ENDWHILE
\ENDFOR
\end{algorithmic}
\end{algorithm}

A few remarks about Algorithm ProxDescent are in order. First, we are
not specific about the derivation of $x^+$ from $x_k+d$, but we assume
that the ``efficient projection'' technique that is the basis of
Theorem~\ref{th:feasproj} is used when possible.
Lemma~\ref{lem:drbetter} indicates that for $\mu$ sufficiently large
and $x$ near a critical point $\bar{x}$ of $h \circ c$, it is indeed
possible to find a local solution $d$ of (\ref{pls}) which satisfies
$h_{x,\mu}(d)<h_{x,\mu}(0)$ as required by the algorithm, and which
also satisfies the conditions of Theorem~\ref{th:feasproj}.
Lemma~\ref{lem:accept} below shows further that the new point $x^+$
satisfies the acceptance tests in the algorithm. However,
Lemma~\ref{lem:accept} is more general in that it also gives
conditions for acceptance of the step when $x_k$ is {\em not} in a
neighborhood of a critical point of $h \circ c$.

Second, we note that the framework allows $x^+$ to be improved
further. For example, we could use higher-order derivatives of $c$ to
take a further step along the manifold of $h$ identified by the
subproblem (\ref{pls}) (analogous to an ``EQP step'' in nonlinear
programming) and reset $x^+$ accordingly if this step produces a
reduction in $h \circ c$. We discuss this point further at the end of
the section.


We start our convergence analysis with a technical result showing that
in the neighborhood of a non-critical point $\bar{x}$, and for bounded
$\mu$, the steps $d$ do not become too short.

\begin{lemma} \label{lem:noncrit1} Consider a function $h \colon \R^m
  \rightarrow \bar\R$ and a map $c \colon \R^n \rightarrow \R^m$.  Let
  $\bar{x}$ be such that: $c$ is $\cC^1$ near $\bar{x}$; $h$ is finite
  at the point $\bar{c}=c(\bar{x})$ and subdifferentially regular
  there; the transversality condition (\ref{cq}) holds; but the
  criticality condition \eqnok{hc_necc} is {\em not} satisfied.  Then
  there exists a quantity $\epsilon > 0$ such that for any sequence
  $x_r \to \bar{x}$ with $h\big(c(x_r)\big) \to h(\bar{c})$, and any
  sequence $\{\mu_r\}$ with $\mu_r \ge \mumin$, any sequence of
  critical points $d_r$ of $h_{x_r,\mu_r}$ satisfying $h_{x_r ,
    \mu_r}(d_r) \le h_{x_r ,\mu_r}(0)$ must also satisfy $\liminf_r
  \mu_r |d_r| \ge \epsilon$.
\end{lemma}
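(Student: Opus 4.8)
The plan is to argue by contradiction, closely following the pattern of the proof of Lemma~\ref{lem:convergence}. Suppose the conclusion fails. Applying the negation with $\epsilon = 1/j$ and extracting a diagonal subsequence, we obtain a single bound $\mumax \ge 0$, together with sequences $x_r \to \bar x$ with $h\big(c(x_r)\big) \to h(\bar c)$, parameters $\mu_r \in [0,\mumax]$, and critical points $d_r$ of $h_{x_r,\mu_r}$ (that is, $0 \in \partial h_{x_r,\mu_r}(d_r)$) with $h_{x_r,\mu_r}(d_r) \le h_{x_r,\mu_r}(0)$, such that $d_r \to 0$. Passing to a further subsequence, assume $\mu_r$ converges in $[0,\mumax]$. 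Write $\hat c_r = c(x_r) + \nabla c(x_r) d_r$; since $c$ is $\cC^1$ near $\bar x$ and $d_r \to 0$, we have $\hat c_r \to \bar c$, and the descent inequality $h(\hat c_r) + \frac{\mu_r}{2}|d_r|^2 \le h_{x_r,\mu_r}(0) = h\big(c(x_r)\big)$ together with the assumption $h\big(c(x_r)\big)\to h(\bar c)$ and lower semicontinuity of $h$ at $\bar c$ forces $h(\hat c_r) \to h(\bar c)$.

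Next I would invoke the chain rule at each $\hat c_r$. Exactly as in the proof of Lemma~\ref{lem:convergence}, outer semicontinuity of the mapping $\partial^{\infty} h$ at $\bar c$ \cite[Proposition~8.7]{Roc98}, combined with the transversality condition \eqnok{cq} and with $\nabla c(x_r)^* \to \nabla c(\bar x)^*$, yields $\partial^{\infty} h(\hat c_r) \cap \mbox{\rm Null}(\nabla c(x_r)^*) = \{0\}$ for all large $r$. Since the affine-outer-composition part of $h_{x_r,\mu_r}$ has derivative $\nabla c(x_r)$, the chain rule then produces, for all large $r$, vectors $v_r$ satisfying
\[
0 = \nabla c(x_r)^* v_r + \mu_r d_r, \qquad v_r \in \partial h(\hat c_r).
\]

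The substantive step is boundedness of $\{v_r\}$. If it fails, pass to a subsequence with $|v_r| \to \infty$; the unit vectors $v_r/|v_r|$ have a limit point $w$ with $|w| = 1$. Dividing the displayed identity by $|v_r|$ and using that $\mu_r$ is bounded, $d_r \to 0$, and $\nabla c(x_r)^* \to \nabla c(\bar x)^*$ gives $\nabla c(\bar x)^* w = 0$; on the other hand, since $v_r \in \partial h(\hat c_r)$ with $\hat c_r \to \bar c$, $h(\hat c_r) \to h(\bar c)$, and $|v_r|^{-1} \downarrow 0$, the vector $w$ lies in $\partial^{\infty} h(\bar c)$. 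This contradicts \eqnok{cq}. Hence $\{v_r\}$ is bounded; along a subsequence $v_r \to \bar v$, and outer semicontinuity of $\partial h$ (again using $\hat c_r \to \bar c$ and $h(\hat c_r) \to h(\bar c)$) gives $\bar v \in \partial h(\bar c)$. Taking the limit in the displayed identity, with $\mu_r d_r \to 0$ since $\mu_r$ is bounded and $d_r \to 0$, we obtain $\nabla c(\bar x)^* \bar v = 0$. Thus $\bar v \in \partial h(\bar c) \cap \mbox{\rm Null}(\nabla c(\bar x)^*)$, i.e.\ the criticality condition \eqnok{hc_necc} holds at $\bar x$ — which (in view of subdifferential regularity of $h$ at $\bar c$ and the chain rule, this is equivalent to $0 \in \partial(h\circ c)(\bar x)$) contradicts the hypothesis, and completes the proof.

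I expect the only mildly delicate point to be the initial reduction: the statement quantifies over all admissible sequences, so one must either run the diagonalization carefully or simply observe that it suffices to derive a contradiction from a single sequence with $d_r \to 0$. Everything after that is a direct reuse of the horizon-subdifferential and outer-semicontinuity machinery already deployed in Lemma~\ref{lem:convergence}, so no genuinely new obstacle arises; the transversality condition \eqnok{cq} does all the heavy lifting, exactly where it did before.
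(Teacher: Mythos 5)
Your proposal is correct and follows essentially the same route as the paper's proof: contradiction via a sequence with $d_r\to 0$, verification of the horizon-subdifferential qualification at $c(x_r)+\nabla c(x_r)d_r$ by outer semicontinuity, the chain rule to produce multipliers $v_r$, boundedness of $\{v_r\}$ from the transversality condition, and passage to the limit to exhibit a vector in $\partial h(\bar c)\cap\mbox{\rm Null}(\nabla c(\bar x)^*)$, contradicting non-criticality. The only cosmetic difference is that you spell out the diagonal extraction and pass to a convergent subsequence of $\mu_r$, neither of which is needed beyond what the paper already asserts.
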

\begin{proof}
If the result were not true, there would exist sequences $x_r$,
$\mu_r$, and $d_r$ as above except that $\mu_r d_r \to 0$. We would
then have $0 \le | d_r | \le \mu_r | d_r| /\mumin \to 0$. Noting that
$h(c(x_r)+\nabla c(x_r) d_r) \to h(\bar{c})$ (using lower
semicontinuity and the fact that the left-hand side is dominated by
$h\big( c(x_r) \big)$, which converges to $h(\bar c)$), we have that
\[
\partial^{\infty} h \big(c(x_r) + \nabla c(x_r) d_r\big) 
\cap \mbox{\rm Null}(\nabla c(x_r)^*) ~=~ \{0\},
\]
for all $r$ sufficiently large. (If this were not true, we could use
an $h$-attentive outer semicontinuity argument based on
\cite[Proposition~8.7]{Roc98} to deduce that $\partial^{\infty} h (\bar c)
\cap \mbox{\rm Null}(\nabla c(\bar x)^*)$ contains a nonzero vector,
thus violating the transversality condition \eqnok{cq}.)  Hence, we
can apply the chain rule and deduce that there are multiplier vectors
$v_r$ such that \eqnok{eq:hxmopt.r} is satisfied, that is,
\begin{align*}
0   & =  \nabla c(x_r)^* v_r + \mu_r d_r, \\
v_r & \in \partial h\big(c(x_r) + \nabla c(x_r) d_r\big),
\end{align*}
for all sufficiently large $r$.  If the sequence $\{ v_r\}$ is
unbounded, we can assume without loss of generality that $|v_r| \to
\infty$. Any accumulation point of the sequence $v_r/|v_r|$ would be a
unit vector in the set $\partial^{\infty} h(\bar{c}) \cap \mbox{\rm
  Null}(\nabla c(\bar x)^*)$, contradicting \eqnok{cq}. Hence, the
sequence $\{ v_r \}$ is bounded, so by taking limits in the conditions
above and using $\mu_r d_r \to 0$ and outer semicontinuity of
$\partial h(c)$ at $\bar{c}$, we can identify a vector $\bar{v}$ such
that $\bar{v} \in \partial h(\bar{c}) \cap \mbox{\rm Null}(\nabla
c(\bar x)^*)$.  Using the chain rule and subdifferential regularity,
this contradicts non-criticality of $\bar{x}$.
\end{proof}


The next result makes use of the efficient projection mechanism of
Theorem~\ref{th:feasproj}. When the conditions of this theorem are
satisfied, we show that Algorithm ProxDescent can perform the
projection to obtain the point $x_k^+$ in such a way that
\eqnok{eq:feas} is satisfied. 
\begin{lemma} \label{lem:accept} 
Consider a function $h \colon \R^m \rightarrow \bar\R$ and a map $c
\colon \R^n \rightarrow \R^m$ that is ${\cal C}^2$ around a point
$\bar x \in \R^n$. Assume that $h$ lower semicontinuous and finite at
$\bar{c}=c(\bar{x})$ and that transversality condition \eqnok{cq}
holds at $\bar{x}$ and $\bar{c}$.  Then there exist constants
$\tilde{\mu}>0$ and $\tilde{\delta} > 0$ with the following property:
For any $x \in B_{\tilde{\delta}}(\bar{x})$, $d \in
B_{\tilde{\delta}}(0)$, and $\mu \ge \tilde{\mu}$ such that
\begin{equation} \label{beer.1}
h_{x,\mu}(d) \le h_{x,\mu}(0), \qquad 
|h(c(x)+\nabla c(x) d) - h\big(c(\bar{x})\big) | < \tilde{\delta}, 
\end{equation}
there is a point $x^+ \in \R^n$ such that 
\begin{subequations} \label{beer.2and3}
\begin{align}
\label{beer.2}
h \big( c(x)\big) - h\big(c(x^+) \big) & \ge 
\sigma \big[h\big(c(x)\big) - h\big(c(x) + \nabla c(x) d\big)\big],                                 \\
\label{beer.3}
|x^+ - (x+d)|                          & \le \half |d|.
\end{align}
\end{subequations}
\end{lemma}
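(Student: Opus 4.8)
The plan is to let $x^+$ be the point produced by the ``efficient projection'' of Theorem~\ref{th:feasproj} (linear estimator improvement) and then verify the two acceptance tests \eqnok{beer.2} and \eqnok{beer.3} by a direct estimate. Since $c$ is $\cC^2$ near $\bar x$, the function $h$ is lower semicontinuous and finite at $\bar c = c(\bar x)$, and the transversality condition \eqnok{cq} holds, Theorem~\ref{th:feasproj} furnishes constants $\gamma,\delta>0$ such that, for every $x \in B_\delta(\bar x)$ and $d \in B_\delta(0)$ with $|h(c(x)+\nabla c(x)d)-h(\bar c)| < \delta$, there is a point $x^+$ satisfying
\[
|x^+-(x+d)| \le \gamma |d|^2 \quad\mbox{and}\quad h\big(c(x^+)\big) \le h\big(c(x)+\nabla c(x)d\big) + \gamma|d|^2.
\]
I would then set $\tilde\delta := \min\{\delta/2,\,1/(2\gamma)\}$ and $\tilde\mu := \max\{1,\,2\gamma/(1-\sigma)\}$, both strictly positive (with the obvious conventions when $\gamma=0$). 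For $x,d,\mu$ as in the hypotheses we have $B_{\tilde\delta}(\bar x)\subset B_\delta(\bar x)$ and $B_{\tilde\delta}(0)\subset B_\delta(0)$, and the second part of \eqnok{beer.1} gives $|h(c(x)+\nabla c(x)d)-h(\bar c)|\le\tilde\delta<\delta$, so Theorem~\ref{th:feasproj} applies and we take the resulting $x^+$ as the required point.

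Test \eqnok{beer.3} is then immediate, since $|d|\le\tilde\delta\le 1/(2\gamma)$ gives $|x^+-(x+d)|\le\gamma|d|^2\le\half|d|$. For test \eqnok{beer.2}, write $\mathrm{pred}:=h\big(c(x)\big)-h\big(c(x)+\nabla c(x)d\big)$ for the decrease predicted by the linearized model. The key step is that, unwinding the definition of $h_{x,\mu}$ from \eqnok{pls}, the first part of \eqnok{beer.1} reads
\[
h\big(c(x)+\nabla c(x)d\big) + \frac{\mu}{2}|d|^2 \le h\big(c(x)\big),
\]
so that $\mathrm{pred}\ge\frac{\mu}{2}|d|^2\ge 0$, and hence $\gamma|d|^2\le(2\gamma/\mu)\,\mathrm{pred}$. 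Combining this with the bound on $h\big(c(x^+)\big)$ above,
\[
h\big(c(x)\big)-h\big(c(x^+)\big) \ge \mathrm{pred}-\gamma|d|^2 \ge \Big(1-\frac{2\gamma}{\mu}\Big)\mathrm{pred},
\]
and since $\mu\ge\tilde\mu$ forces $2\gamma/\mu\le 1-\sigma$ while $\mathrm{pred}\ge 0$, the right-hand side is at least $\sigma\,\mathrm{pred}$, which is exactly \eqnok{beer.2}. (If $h\big(c(x)\big)=+\infty$ one notes that $h\big(c(x^+)\big)$ is still finite by the Theorem~\ref{th:feasproj} bound and both sides of \eqnok{beer.2} equal $+\infty$; in the intended application $h\big(c(x)\big)$ is finite, so this case does not really arise.)

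I do not expect a genuine obstacle: the argument is a direct combination of Theorem~\ref{th:feasproj} with the hypothesis $h_{x,\mu}(d)\le h_{x,\mu}(0)$. The one point needing a little care is the estimate $|d|^2\le(2/\mu)\,\mathrm{pred}$ extracted from that hypothesis --- it is precisely what makes the second-order restoration error $\gamma|d|^2$ from Theorem~\ref{th:feasproj} negligible compared with the predicted decrease once $\mu$ is large --- together with the routine bookkeeping of choosing $\tilde\mu$ and $\tilde\delta$ so that all constants stay positive, the balls $B_{\tilde\delta}$ sit inside the region where Theorem~\ref{th:feasproj} is valid, and $\mathrm{pred}\ge 0$ (so that multiplying through by $\sigma\in(0,1)$ preserves the inequality).
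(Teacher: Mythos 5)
Your proposal is correct and follows essentially the same route as the paper: invoke Theorem~\ref{th:feasproj} to produce $x^+$, get \eqnok{beer.3} from $\gamma|d|^2 \le \tfrac12 |d|$ via the choice of $\tilde\delta$, and get \eqnok{beer.2} from the bound $|d|^2 \le (2/\mu)\,[h(c(x)) - h(c(x)+\nabla c(x)d)]$ extracted from $h_{x,\mu}(d)\le h_{x,\mu}(0)$, with $\tilde\mu$ chosen so that $1-2\gamma/\mu \ge \sigma$. The only (immaterial) differences are your slightly more explicit constants $\tilde\delta=\min\{\delta/2,1/(2\gamma)\}$ and $\tilde\mu=\max\{1,2\gamma/(1-\sigma)\}$.
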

\begin{proof}
  Define $\delta$ and $\gamma$ as in 
  Theorem~\ref{th:feasproj} and set
$
\tilde{\delta}=\min\big(\delta,1/(2\gamma)\big)
$.
 By
  applying Theorem~\ref{th:feasproj}, we obtain a point $x^+$
for which $|x^+-(x+d)| \le \gamma |d|^2 \le \half |d|$ (thus
satisfying (\ref{beer.3})) and $h\big(c(x^+)\big) \le h(c(x)+\nabla
c(x)d) + \gamma |d|^2$. Also note that because of $h_{x,\mu}(d) \le
h_{x,\mu}(0)$, we have
\[
h(c(x)+\nabla c(x)d) + \frac{\mu}{2}|d|^2 \le h\big(c(x)\big)
\]
and hence
\[
|d|^2 \le \frac{2}{\mu} \left[ h\big(c(x)\big) - h(c(x)+\nabla c(x)d) \right].
\]
We therefore have
\begin{align*}
h\big(c(x)\big) - h\big(c(x^+)\big) 
                                       & \ge h\big(c(x)\big) - h(c(x)+\nabla c(x)d)  - \gamma |d|^2 \\
                                       & \ge \left[ h\big(c(x)\big) - h(c(x)+\nabla c(x)d) \right] 
\left( 1-\frac{2 \gamma}{\mu} \right).
\end{align*}
By choosing $\tilde{\mu}$ large enough that $1-2\gamma/\tilde{\mu} >
\sigma$, we obtain (\ref{beer.2}).
\end{proof}

%

We also need the following elementary lemma.

\begin{lemma} \label{lem:tech1}
For any constants $\tau>1$ and $\rho>0$ and any positive integer
$t$, we have
\[
\min
\Big\{
\sum_{i=1}^t \alpha_i^2 \tau^i : \sum_{i=1}^t \alpha_i \ge \rho,~ \alpha \in \R^t_+
\Big\}
~~>~~
\rho^2 (\tau-1).
\]
\end{lemma}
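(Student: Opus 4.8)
The plan is to minimize the strictly convex quadratic $\sum_{i=1}^t \alpha_i^2 \tau^i$ over the halfspace $\{\alpha \in \R^t : \sum_i \alpha_i \ge \rho\}$ intersected with the nonnegative orthant, and observe that the constraint $\alpha \ge 0$ is actually inactive at the solution, so it can be ignored. Indeed, on the halfspace $\sum_i \alpha_i \ge \rho$ alone, the minimum of $\sum_i \alpha_i^2 \tau^i$ is attained at the point where the gradient $2(\alpha_i \tau^i)_i$ is a nonnegative multiple of $\bfone$, i.e. $\alpha_i \tau^i = \lambda$ for some $\lambda > 0$ and all $i$, with $\sum_i \alpha_i = \rho$ active (since $\rho > 0$). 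This forces $\alpha_i = \lambda \tau^{-i}$, all strictly positive, so the orthant constraint is satisfied automatically; hence the constrained minimum over $\R^t_+$ coincides with the minimum over the halfspace.

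First I would solve for $\lambda$: from $\sum_{i=1}^t \lambda \tau^{-i} = \rho$ we get $\lambda = \rho / \big(\sum_{i=1}^t \tau^{-i}\big)$. Then the optimal objective value is
\[
\sum_{i=1}^t \alpha_i^2 \tau^i = \sum_{i=1}^t \lambda^2 \tau^{-2i} \tau^i = \lambda^2 \sum_{i=1}^t \tau^{-i}
= \frac{\rho^2}{\big(\sum_{i=1}^t \tau^{-i}\big)^2} \cdot \sum_{i=1}^t \tau^{-i}
= \frac{\rho^2}{\sum_{i=1}^t \tau^{-i}}.
\]
So the claim reduces to the purely arithmetic inequality $\sum_{i=1}^t \tau^{-i} < 1/(\tau - 1)$. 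This is immediate: $\sum_{i=1}^t \tau^{-i} < \sum_{i=1}^{\infty} \tau^{-i} = \frac{\tau^{-1}}{1 - \tau^{-1}} = \frac{1}{\tau - 1}$, using $\tau > 1$. Taking reciprocals (both sides positive) gives $\frac{\rho^2}{\sum_{i=1}^t \tau^{-i}} > \rho^2(\tau - 1)$, as required.

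Alternatively, and perhaps cleaner to write up, one can avoid explicitly invoking optimality conditions by a direct Cauchy--Schwarz argument: for any feasible $\alpha$,
\[
\rho \le \sum_{i=1}^t \alpha_i = \sum_{i=1}^t \big(\alpha_i \tau^{i/2}\big)\big(\tau^{-i/2}\big)
\le \Big(\sum_{i=1}^t \alpha_i^2 \tau^i\Big)^{1/2} \Big(\sum_{i=1}^t \tau^{-i}\Big)^{1/2},
\]
so $\sum_{i=1}^t \alpha_i^2 \tau^i \ge \rho^2 / \sum_{i=1}^t \tau^{-i} > \rho^2 (\tau-1)$ by the geometric series bound above. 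I would present this version, since it needs no case analysis and the strictness comes for free from the strict inequality in the partial geometric sum. There is no real obstacle here; the only point requiring a word of care is that the minimum over the compact-free feasible set is actually attained (so that ``$>$'' rather than ``$\ge$'' is legitimate as a statement about the infimum), but the Cauchy--Schwarz form sidesteps even that, since it gives the strict bound for \emph{every} feasible $\alpha$ directly, hence for the infimum.
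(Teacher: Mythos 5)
Your first argument is essentially the paper's own proof: the paper exhibits the same minimizer, $\bar\alpha_i \propto \tau^{-i}$ (written there as $\bar\alpha_i = (\tau^{1-i}-\tau^{-i})/(1-\tau^{-t})$ after scaling to $\rho=1$), notes it is strictly positive so the orthant constraint is inactive, and evaluates the objective to get $(\tau-1)/(1-\tau^{-t}) > \tau-1$, which is exactly your $\rho^2/\sum_{i=1}^t \tau^{-i} > \rho^2(\tau-1)$. Your Cauchy--Schwarz variant is a genuinely cleaner alternative: it dispenses with optimality conditions and with any discussion of attainment, since the uniform lower bound $\rho^2/\sum_{i=1}^t\tau^{-i}$ holds for \emph{every} feasible $\alpha$ and is itself a constant strictly exceeding $\rho^2(\tau-1)$ by the partial geometric sum estimate. (One small caution on your closing remark: a strict inequality holding at every feasible point does not by itself pass to the infimum; what saves you is precisely that the intermediate bound $\rho^2/\sum_{i=1}^t\tau^{-i}$ is a fixed constant strictly above $\rho^2(\tau-1)$, so the infimum inherits the strict inequality. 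Phrase it that way and the write-up is airtight.)
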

\begin{proof}
  By scaling, we can suppose $\rho = 1$.  Clearly the optimal solution
  of this problem must lie on the hyperplane $H = \{ \alpha : \sum_i
  \alpha_i = 1 \}$.  The objective function is convex, and its
  gradient at the point $\bar \alpha \in H$ defined by
\[
\bar \alpha_i = \frac{\tau^{1-i} - \tau^{-i}}{1 - \tau^{-t}} > 0
\]
is easily checked to be orthogonal to $H$.  Hence $\bar \alpha$ is
optimal, and the corresponding optimal value is easily checked to be
strictly larger than $\tau - 1$.
\end{proof}

For the main convergence result, we make the additional assumption
that $h$ can be bounded below, globally, by a (concave) quadratic
function, that is,
\begin{equation} \label{eq:proxbounded}
h(c) \ge h_0 - q_0 |c|^2 \quad \mbox{for all $c \in \R^m$},
\end{equation}
for some scalars $h_0$ and $q_0\ge 0$.  Such functions are called {\em
  prox-bounded} \cite{Roc98}.  This assumption holds for all $h$
considered in the examples of Section~\ref{sec:examples}. The other
assumptions made on $h$, $c$, and $\bar{x}$ in the theorem below allow
us to apply both Lemmas~\ref{lem:noncrit1} and \ref{lem:accept}.

\begin{theorem}[global convergence] \label{th:glob} 
Consider a function $h \colon \R^m \rightarrow \bar\R$ and a map $c
\colon \R^n \rightarrow \R^m$. Suppose that the sequence $\big(x_k,
h\big(c(x_k)\big)\big)$ generated by Algorithm ProxDescent has an
accumulation point at $\big(\bar{x},h(\bar{c})\big)$, where $\bar{c}
:= c(\bar{x})$. Suppose that $c$ is $\cC^2$ near $\bar{x}$, that $h$
is subdifferentially regular (thus lower semicontinuous) at
$\bar{c}$ and is prox-bounded, and that the transversality condition
\eqref{cq} holds at $\bar{x}$. Then the criticality condition
\eqref{hc_necc} is satisfied at $\bar{x}$.
\end{theorem}
\begin{proof} Suppose for contradiction that $\big(\bar{x}, h(\bar
  c)\big)$ is an accumulation point but is not critical.  Since the
  sequence $\{ h\big(c(x_r)\big) \}$ generated by the algorithm is
  monotonically decreasing, we have $h\big(c(x_r)\big) \downarrow
  h(\bar{c})$. By the acceptance test in the algorithm and the
  definition of $h_{x,\mu}$ in (\ref{pls}), we have that
\begin{align}
\nonumber
h\big(c(x_{r+1})\big)                     & \le h\big(c(x_{r})\big) - \sigma 
\big[ h\big(c(x_{r})\big) - h\big(c(x_{r}) + \nabla c(x_{r}) d_{r}\big) \big] \\
\label{eq:glob1}
                                          & \le h\big(c(x_{r})\big) - \sigma \frac{\mu_{r}}{2} | d_{r} |^2.
\end{align}
We thus have
\begin{align*}
h\big(c(x_0)\big) - h\big(c(\bar{x})\big) & \ge
\sum_{r=0}^{\infty} h\big(c(x_r)\big) - h\big(c(x_{r+1})\big)                 \\
                                          & \ge \frac{\sigma}{2} \sum_{r=1}^{\infty}  \mu_{r} |d_{r}|^2 
~\ge~ \frac{\sigma}{2} \mumin \sum_{r=1}^{\infty} |d_r|^2,
\end{align*}
which implies that $d_r \to 0$.  Further, we have that
\begin{align}
\nonumber
|h(c(x_r)+                                & \nabla c(x_r) d_r) - h(\bar{c}) | \\
\nonumber
                                          & \le \left[ h\big(c(x_r)\big) - h(c(x_r)+\nabla c(x_r)d_r) \right] +
\left[ h\big(c(x_r)\big) - h(\bar{c}) \right]                                 \\
\label{eq:hnear2}
                                          & \le \sigma^{-1} \left[ h\big(c(x_r)\big) - h\big(c(x_{r+1})\big) \right] + 
\left[ h\big(c(x_r)\big) - h(\bar{c}) \right] \to 0.
\end{align}

Because $\bar{x}$ is an accumulation point, we can define a
subsequence of indices $r_j$, $j=0,1,2,\dotsc$ such that $\lim_{j \to
  \infty} x_{r_j} = \bar{x}$. The corresponding sequence of
regularization parameters $\mu_{r_j}$ must be unbounded, since
Lemma~\ref{lem:noncrit1} indicates that $\liminf_j \mu_{r_j} | d_{r_j}
| \ge \epsilon >0$. Defining $\tilde{\mu}$ and $\tilde{\delta}$ as in
Lemma~\ref{lem:accept}, we can assume without loss of generality that
$\mu_{r_j}>\tau \tilde{\mu}$ and $\mu_{r_{j+1}} > \mu_{r_j}$ for all
$j$. Moreover, since $x_{r_j} \to \bar{x}$ and $d_r \to 0$, and
using (\ref{eq:hnear2}), we can assume that
\begin{subequations} \label{lem:accept:conds}
\begin{alignat}{2}
\label{lem:accept:conds.x}
x_{r_j}                              & \in B_{\tilde{\delta}/2}(\bar{x}), &  & \quad \mbox{\rm for $j=0,1,2,\dotsc$,} \\
\label{lem:accept:conds.d}
d_{r}                                & \in B_{\tilde{\delta}}(0),         &  & \quad \mbox{\rm for all $r>r_0$,}      \\
\label{lem:accept:conds.h}
\big| h(c(x_{r}) + \nabla c(x_{r}) d_{r} & ) - h(\bar{c}) \big| \le
\tilde{\delta},                      &                                    & \quad \mbox{\rm for all $r>r_0$.}
\end{alignat}
\end{subequations}

Suppose first that there are infinitely many $r_j$, $j=0,1,2,\dotsc$,
such that $\mu$ is increased in an inner iteration of iteration
$r_j$. Without loss of generality, we can assume that this behavior
happens for {\em all} $r_j$, $j=0,1,2,\dotsc$. We consider reasons why
the previously tried value $\mu_{r_j} /\tau$ would have been rejected.
The first possible reason for rejection is that \eqref{pls} does not
have a local minimizer for $x=x_{r_j}$ and
$\mu=\mu_{r_j}/\tau$. Because of \eqref{eq:proxbounded}, we have
\begin{align*}
h \big( c(x_{r_j}) + & \nabla c(x_{r_j}) d \big) + \frac{\mu_{r_j}}{2 \tau} |d|^2 \\
& \ge h_0 - q_0  \big| c(x_{r_j}) + \nabla c(x_{r_j}) d \big|^2 + 
\frac{\mu_{r_j}}{2 \tau} |d|^2 \\
& \ge \left[ h_0 - 2q_0 |c(x_{r_j})|^2 \right] +
\left[ \frac{\mu_{r_j}}{2 \tau} - 2q_0 | \nabla c(x_{r_j})|^2 \right] |d|^2 \\
& \ge \bar{h}_0 + \left( \frac{\mu_{r_j}}{2 \tau} - \bar{q}_0 \right) |d|^2,
\end{align*}
where the last inequality follows from \eqref{lem:accept:conds.x} and
smoothness of $c$. We conclude that $h_{x_{r_j},\mu_{r_j}/\tau}$ has
bounded level sets for all $\mu_{r_j}$ sufficiently large, thus by
lower semicontinuity of $h$, it attains a minimizer
\cite[Theorem~1.9]{Roc98}. Note that $d=0$ is not the minimizer
(otherwise, ProxDescent would have terminated), so at least one of the
local minimizers that exists has $h_{x_{r_j},\mu_{r_j}/\tau}(d) <
h_{x_{r_j},\mu_{r_j}/\tau} (0)$,

We can thus assume that a local minimizer $\hat{d}_{r_j}$ is found at
$x=x_{r_j}$ and $\mu=\mu_{r_j}/\tau$. If $\lim_j \hat{d}_{r_j} =0$,
we have from $h(c(x_{r_j})+\nabla c(x_{r_j}) \hat{d}_{r_j}) \le
h(c(x_{r_j})) \downarrow h(c(\bar{x}))$, the fact that $x_{r_j} \to
\bar{x}$, and lower semicontinuity of $h$ that $h(c(x_{r_j})+\nabla
c(x_{r_j}) \hat{d}_{r_j}) \to h(c(\bar{x}))$. Thus, all conditions of
Lemma~\ref{lem:accept} are satisfied, by $x=x_{r_j}$,
$d=\hat{d}_{r_j}$, $\mu=\mu_{r_j}/\tau$, so it follows from this lemma
that a step would have been taken and 
$\mu$ would {\em not} have been increased above $\mu_{r_j}/\tau$,
a contradiction.  Hence, we must have $\hat{d}_{r_j} \nrightarrow
0$. We can therefore identify a constant $\hat\epsilon>0$ and assume
without loss that $| \hat{d}_{r_j} | \ge \hat\epsilon$ for all
$j=0,1,2,\dotsc$. Since $h_{x_{r_j}, \mu_{r_j}/\tau} (\hat{d}_{r_j}) <
h_{x_{r_j}, \mu_{r_j}/\tau} (0)$, we have
\begin{equation} \label{eq:qbb}
h(c(x_{r_j})+\nabla c(x_{r_j}) \hat{d}_{r_j})  < h(c(x_{r_j})) - \frac{\mu_{r_j}}{2\tau} | \hat{d}_{r_j} |^2.
\end{equation}
Since $\mu_{r_j} \uparrow \infty$, this inequality contradicts
prox-boundedness for all $j$ sufficiently large.  To see this, we have
from \eqref{eq:proxbounded} and $|\hat{d}_{r_j}| \ge \hat\epsilon>0$
(for large $j$) that
\begin{align*}
h(c(x_{r_j}) + \nabla c(x_{r_j}) \hat{d}_{r_j}) & \ge
h_0 - q_0  \left| c(x_{r_j}) + \nabla c(x_{r_j}) \hat{d}_{r_j} \right|^2 \\
& \ge \left[ h_0 - 2q_0 | c(x_{r_j})|^2 \right] -
\left[ 2q_0 | \nabla c(x_{r_j}) |^2 \right] | \hat{d}_{r_j}|^2 \\
& > h(c(x_{r_j})) - \frac{\mu_{r_j}}{2\tau} |\hat{d}_{r_j}|^2 \quad \mbox{for all $j$ sufficiently large} \\
& > h(c(x_{r_j})+\nabla c(x_{r_j}) \hat{d}_{r_j}),
\end{align*}
giving a contradiction.  We conclude that the case of $\hat{d}_{r_j}
\nrightarrow 0$ also cannot hold, so there are {\em not} infinitely
many $r_j$, $j=0,1,2,\dotsc$ such that $\mu$ is increased during an
internal iteration of major iteration $r_j$. In fact, we can claim, with no loss of generality,
 that $\mu$ is not increased in iteration $r_j$, so that the first
value of $\mu$ tried in each iteration $r_j$, $j=0,1,2,\dotsc$ is
accepted.

Let $k_j$, $j=1,2,\dotsc$ denote the latest iteration prior to
iteration $r_j$ for which $\mu$ is increased in an internal iteration.
Since $\mu_{r_j} > \mu_{r_{j-1}}$, the index $k_j$ is well defined,
and from the discussion above, we have $r_{j-1} < k_j < r_j$.  Since
no increases are performed internally during iterations $k_j+1,
\dotsc, r_j$, the value $\mu_k$ at each of these steps is the first
value tried, that is,
\begin{equation} \label{eq:rum.5}
\tau \tilde{\mu} < \mu_{r_j} = \tau^{-1} \mu_{r_j-1}
 = \tau^{-2} \mu_{r_j-2} = \dotsc = \tau^{k_j-r_j} \mu_{k_j}.
\end{equation}
 We show first that
\begin{equation} \label{eq:krj}
x_{k_j} - x_{r_j} \to 0. 
\end{equation}
If this limit did not hold, there would be a value $\hat{\delta}>0$
such that $| x_{k_j} - x_{r_j} | \ge \hat{\delta}$ for infinitely
many $j$ --- without loss of generality for {\em all} $j$. From the acceptance criteria in
Algorithm ProxDescent, we have 
\begin{equation} \label{eq:77}
| x_{k+1}-x_k| \le | x_{k+1}-(x_k+d_k) | + |d_k| \le \frac32 |d_k|,
\end{equation}
and so
\[
\hat{\delta} \le | x_{r_j} - x_{k_j} | \le
\sum_{k=k_j}^{r_j-1} |x_{k+1}-x_k| \le \frac32
\sum_{k=k_j}^{r_j-1} |d_k|.
\]
To bound the decrease in objective function over the steps from
$x_{k_j}$ to $x_{r_j}$, we have from \eqref{eq:glob1} and
\eqref{eq:rum.5} that
\begin{align*}
h\big(c(x_{k_j})\big) - h\big(c(x_{r_j})\big) 
&= \sum_{k=k_j}^{r_j-1} h\big(c(x_k)\big) - h\big(c(x_{k+1})\big) \\
& \ge \frac{\sigma}{2} \sum_{k=k_j}^{r_j-1} \mu_k |d_k|^2 
~=~ \frac{\sigma}{2} \mu_{r_j} 
\sum_{k=k_j}^{r_j-1}  \tau^{r_j-k} |d_k|^2.
\end{align*}
To obtain a lower bound on the final summation, we apply
Lemma~\ref{lem:tech1} with $\rho=2 \hat{\delta}/3$ (from
\eqref{eq:77}) and $t=r_j-k_j \ge 1$ to obtain
\[
h\big(c(x_{k_j})\big) - h\big(c(x_{r_j})\big) \ge \frac{\sigma}{2}
\mu_{r_j} (2 \hat{\delta}/3)^2 (\tau-1) \ge \frac{2}{9} \tau
\tilde\mu \hat{\delta}^2 (\tau-1) >0,
\]
where we have used $\mu_{r_j} > \tau \tilde\mu$. This inequality
contradicts $h(c(x_r)) \downarrow h(c(\bar{x}))$, so we conclude that
\eqref{eq:krj} holds. It thus follows from the definition of $\{ r_j
\}$ that
\begin{equation} \label{eq:limkj}
\lim_{j \to \infty} \, x_{k_j} = \lim_{j \to \infty} x_{r_j} + \lim_{j
  \to \infty} (x_{k_j} - x_{r_j})  = \bar{x}.
\end{equation}
An identical argument to the one we used to show that $\mu$ cannot be
increased at iteration $r_j$ can now be applied to the sequence $\{
k_j \}$, $j=0,1,2,\dotsc$, to show that the second-to-last value
$\mu_{k_j}/\tau$ tried at iteration $k_j$ would be been accepted for
all $j$ sufficiently large. This contradicts the definition of
$k_j$. Summarizing these arguments, we conclude that the sequence
$\{r_j \}$ does not exist, so the desired contradiction is obtained,
and $\bar{x}$ must be a critical point.
\end{proof}

We note that this global convergence result (stationarity of
accumulation points) is typical of algorithms for nonlinear
programming and composite nonsmooth optimization; see for example
\cite[Theorem~2.1]{FleS89}, \cite[Theorem~3.1]{Yua85}.


To illustrate the idea of identification, we state a simple manifold
identification result for the case when the function $h$ is convex and
finite.
\begin{theorem} \label{th:id.conv}
Consider a function $h:\R^m \to \R$, a map $c:\R^n \to \R^m$, and a
point $\bar{x} \in \R^n$ such that $c$ is $C^2$ near $\bar{x}$ and
that the constraint qualification (\ref{li}) and the strict
criticality condition \eqnok{ri} both hold for the composite function
$h \circ c$ at $\bar x$.  Suppose too that $h$ is convex and
continuous on $\mbox{\rm dom} \, h$ near $\bar{c} :=
c(\bar{x})$. Suppose in addition that $h$ is partly smooth at
$\bar{c}$ relative to the manifold $\cM$.  Then if Algorithm
ProxDescent generates a sequence $x_r \to \bar{x}$, we have that
$c(x_r) + \nabla c(x_r) d_r \in \cM$ for all $r$ sufficiently large.
\end{theorem}
\begin{proof}
Note that $h$, $c$, and $\bar{x}$ satisfy the assumptions of
Theorem~\ref{th:id1}, with $\hat{\mu}=0$.  To apply
Theorem~\ref{th:id1} and thus prove the result, we need to show only
that $\mu_r |x_r - \bar{x}| \to 0$. In fact, we show that
$\{\mu_r\}$ is bounded, so that this estimate is satisfied trivially.

Suppose for contradiction that $\{ \mu_r \}$ is unbounded, so without loss of generality we
can choose an infinite subsequence $\{ r_j \}_{j=0,1,2,\dotsc}$ with
the following properties:
\begin{subequations} \label{eq:irj}
\begin{align}
\label{eq:irj.1}
\mu_{r_j} & \uparrow \infty, \\
\label{eq:irj.2}
\lim_{j \to \infty} x_{r_j} &= \bar{x}, \\
\label{eq:irj.3}
\mu & \mbox{\rm \; was increased at an internal iteration of iteration $r_j$.}
\end{align}
\end{subequations}
Similarly to the proof of Theorem~\ref{th:glob}, we consider the
reasons why the value $\mu_{r_j}/\tau$ was rejected as a possible
value for $\mu$ at iteration $r_j$. Let $\hat{d}_{r_j}$ be the value
of $d$ obtained by solving \eqref{pls} with $x=x_{r_j}$ and
$\mu=\mu_{r_j}/\tau$. If $\lim_{j \to \infty}\hat{d}_{r_j}=0$, we have
from \eqref{eq:irj.1} and \eqref{eq:irj.2} and continuity of $h$ that
the conditions of Lemma~\ref{lem:accept} are satisfied by $x=x_{r_j}$,
$d=\hat{d}_{r_j}$, and $\mu=\mu_{r_j}/\tau$, for all $j$ sufficiently
large. This lemma implies that $\mu_{r_j}/\tau$ would have been
accepted at iteration $r_j$, a contradiction. We must therefore have
$\hat{d}_{r_j} \nrightarrow 0$, so may as well assume that we can identify
$\hat\epsilon>0$ such that $| d_{r_j}| \ge \hat\epsilon$ for all $j$
sufficiently large. Since $h_{x_{r_j}, \mu_{r_j}/\tau} (\hat{d}_{r_j})
< h_{x_{r_j}, \mu_{r_j}/\tau} (0)$, inequality \eqref{eq:qbb} holds.
The assumptions on $h$ imply that $h$ is globally bounded below by a
{\em linear} function (the supporting hyperplane at $c(\bar{x})$, for
example), so as in the proof of Theorem~\ref{th:glob}, inequality
\eqref{eq:qbb} also leads to a contradiction.  We conclude that $\{
\mu_r \}$ is bounded, as claimed.
%
\end{proof}

To enhance the step $d$ obtained from (\ref{pls}),
we might try to incorporate second-order information
inherent in the structure of the subdifferential $\partial h$ at the
new value of $c$ predicted by the linearized subproblem.
Knowledge of the subdifferential 
$ \partial h\big(c(x) + \nabla c(x) d \big)$ allows us in principle to
compute the tangent space to $\cM$.  We could then try to ``track''
$\cM$ using second-order information, since both the map $c$ and the
restriction of the function $h$ to $\cM$ are $\cC^2$.

\section{Computational Results} \label{sec:computational}

We present results for Algorithm ProxDescent applied to three problems
drawn from the examples of Section~\ref{sec:examples}.  Our results
are far from exhaustive; the wide range of applications of our
framework make a comprehensive study impossible. Moreover, the
algorithmic framework that we present and analyze is of a bare-bones
nature. Significant improvements in efficiency could be gained by
enhancing it in various ways (for example by making the strategy to
increase and decrease $\mu$ more adaptive) and by customizing it to
the various applications. Our goal here is to show that even the basic
ProxDescent algorithm gives good performance on a diverse set of
applications. Two of our applications are regularized linear
least-squares problems, one with a nonconvex regularizer. The other is
a nonsmooth penalty function from a nonlinear programming application
in power systems.

We start with the following $\ell_1$-regularized least-squares
problem:
\begin{equation} \label{eq:lsl1}
\min_x \, \frac12 \| Ax-b \|_2^2 + \nu \|x\|_1,
\end{equation}
where $\nu>0$ is a regularization parameter. This problem has been
widely studied in recent years in the context of compressed sensing
\cite{Can06a} (where $m<n$) and LASSO \cite{Tib96} (where typically
$m>n$). As mentioned earlier, Algorithm ProxDescent applied to this
problem is closely related to the SpaRSA algorithm for compressed
sensing; we refer to \cite{WriNF08a} for more detailed numerical
testing.

We use ProxDescent to solve a compressed sensing signal recovery
problem in which $51$ components of a $n=4096$-dimensional vector
$\hat{x}$ were chosen to have nonzero values, and $256$ random linear
observations $A\hat{x}$ were made, where each entry of $A$ is drawn
i.i.d. from a normal distribution with mean zero and standard
deviation $1/(2n)$. Random normal noise of mean $0$ and standard
deviation $10^{-4}/(2n)$ is added to each observation, to yield the
vector $b$ in \eqref{eq:lsl1}. The nonzero values of $\hat{x}$ have a
wide range of magnitudes. We choose the regularization parameter $\nu$
to be $.02\| A^T b \|_{\infty}$, which gives good recovery accuracy,
and use $x=0$ as the starting point. For the parameters in
ProxDescent, we used $\tau=1.25$, $\sigma=.01$, and
$\mumin=10^{-4}$. Termination was declared with the relative change in
function value between two successive iterations dropped below
$10^{-4}$. (We note that because of the sufficient decrease condition
in ProxDescent, this quantity dominates a multiple of the first-order
predicted decrease in $h$, which quantity is zero at a stationary
point.)

Results are shown in Figure~\ref{fig:sparsa}. ProxDescent runs for 92
iterations before declaring convergence. The top subfigure illustrates
the solution $\hat{x}$ (with nonzero components shown as vertical
bars) and the recovered solution $x^*$ (indicated by circles), which
has $25$ nonzero components. Note that $x^*$ appears to have captured
all larger-magnitude components of $\hat{x}$ accurately. The middle
figure plots log of objective function value against iteration number,
showing apparent linear convergence. The bottom plot shows the log of
$\mu_k$ plotted against iteration number $k$. This value shows a slow
downward trend and is not constrained by the minimum value $\mumin$.

\begin{figure} 
\begin{center}
\includegraphics[width=1.0\textwidth]{./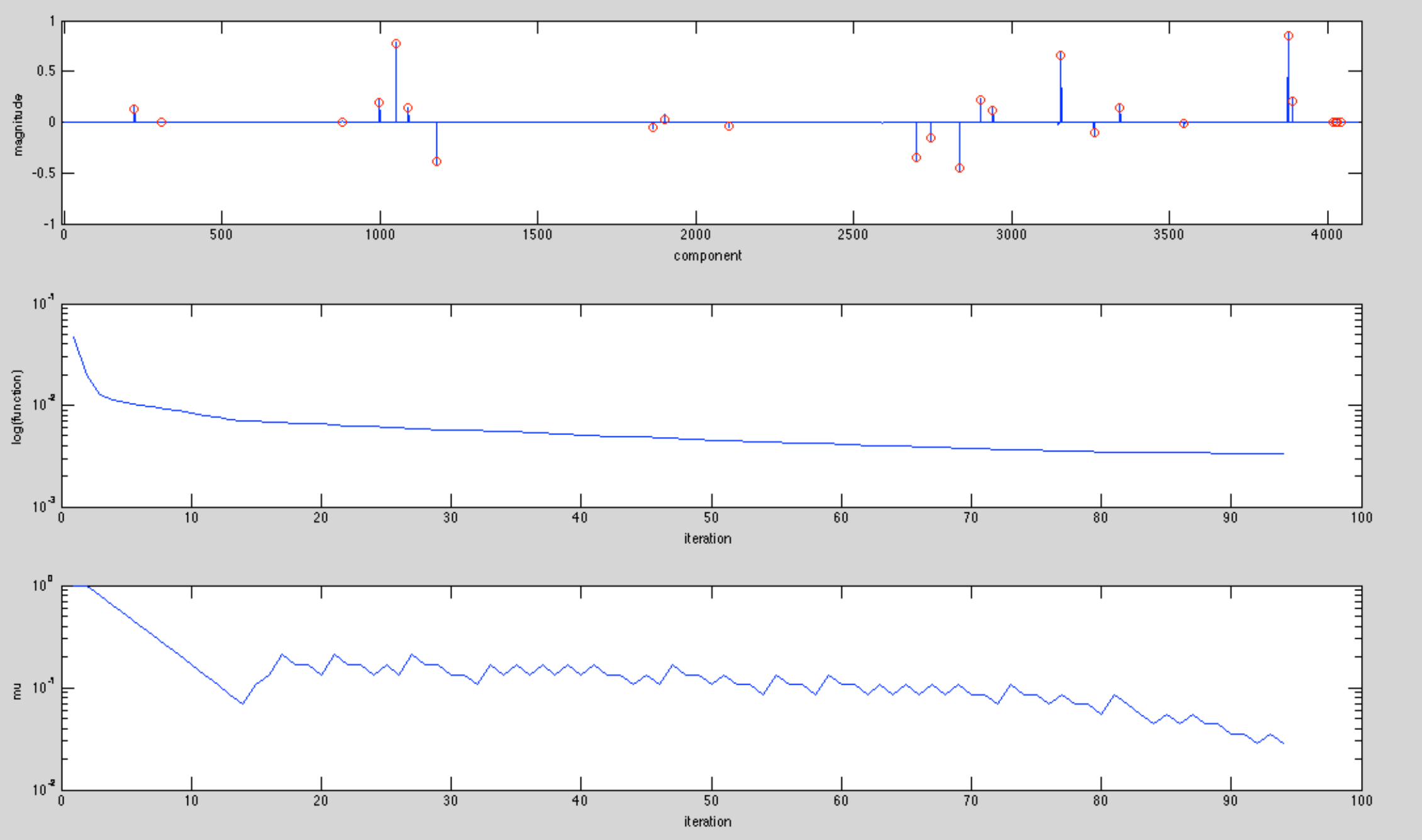}
\end{center}
\caption{Results for formulation (\ref{eq:lsl1}). Top figure shows
  spikes in true signal (bars) and recovered spikes (circles), showing
  accurate recovery of the true signal. Middle figure shows the
  function values at each iteration, while the bottom figure shows the
  values of $\mu_k$ at each iteration $k$.\label{fig:sparsa}}
\end{figure}

Consider now the linear least-squares problem with a component-wise
MCP regularized $\phi(\cdot)$ defined in \eqref{eq:mcp}:
\begin{equation} \label{eq:lsmcp}
\min_x \, \frac12 \| Ax-b \|_2^2 + \nu \sum_{i=1}^n \phi(x_i),
\end{equation}
where $\nu>0$ is again the regularization parameter. In replacing the
regularizer $\| \cdot \|_1$ of \eqref{eq:lsl1} with the nonconvex
regularizer of \eqref{eq:lsmcp}, we reduce bias in the solution at the
cost of introducing nonconvexity and thus the possibility of local
minima.

To test ProxDescent on this problem, we used a different random
instance of the same problem as in \eqref{eq:lsl1}, with the same
parameter settings. We define the parameters of the MCP regularizer
\eqref{eq:mcp} to be $\lambda=1$ and $a=\| \hat{x} \|_{\infty}/3$.
These choices ensure that the MCP function has similar slope to $\|
\cdot \|_1$ near zero and that it achieves its maximum value of $a
\lambda^2/2$ for the larger spikes. Results are shown in
Figure~\ref{fig:mcp}, using the same format as Figure~\ref{fig:sparsa}
for the subfigures. Despite the nonconvexity, ProxDescent appears to
have no trouble finding the global minimum of \eqref{eq:lsmcp}, and in
a similar number of iterations as for \eqref{eq:lsl1} ($84$, in this
particular instance).  In fact, a close comparison of the top
subfigures in Figures~\ref{fig:sparsa} and \ref{fig:mcp} indicates
that the recovered spikes in Figures~\ref{fig:sparsa} have slightly
lower magnitudes in general than the true spikes, an effect that is
not present in Figure~\ref{fig:mcp}. This effect is subtle for the
choice of $\nu$ used here (detectable only in high-precision versions
of the plots), but it illustrates nicely the unbiasedness property of
the MCP regularizer. Once again, we see a faint downward trend in the
value of $\mu_k$, and a convergence rate that is clearly linear,
until the solution is identified to high accuracy at about iteration 80.

\begin{figure} 
\begin{center}
\includegraphics[width=1.0\textwidth]{./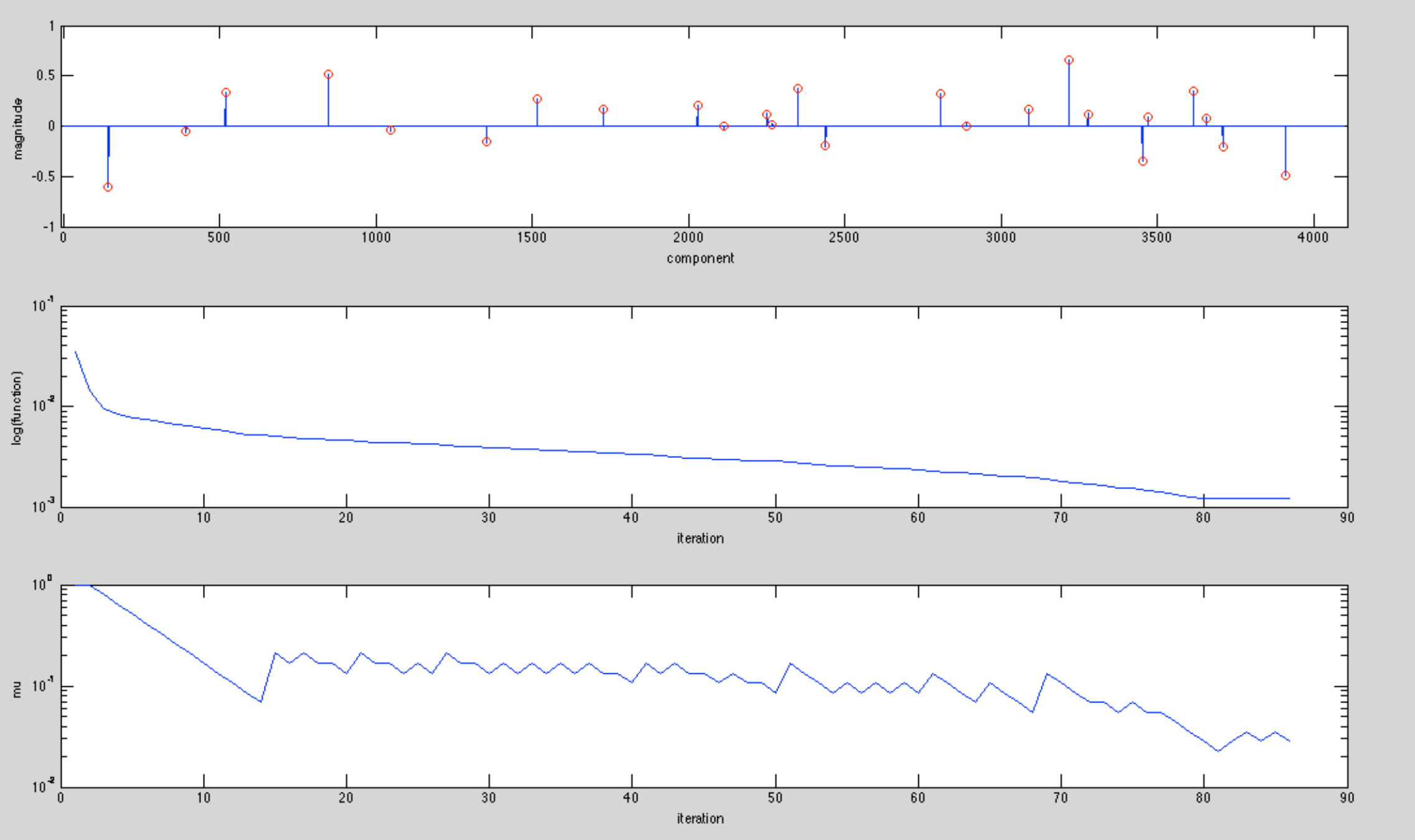}
\end{center}
\caption{Results for formulation (\ref{eq:lsmcp}). Top figure shows
  spikes in true signal (bars) and recovered spikes (circles), showing
  accurate recovery of the true signal. Middle figure shows the
  function values at each iteration, while the bottom figure shows the
  values of $\mu_k$ at each iteration $k$.\label{fig:mcp}}
\end{figure}

Finally, we consider the following nonlinear optimization problem:
\begin{equation} \label{eq:nlp}
\min \, p^Tx \;\; \mbox{subject to} \;\; c(x)=0, \;\; \underline{x}
\le x \le \overline{x},
\end{equation}
where $c:\R^n \to \R^q$ is a smooth nonlinear vector function. A
nonsmooth penalty formulation of this problem, stated in a form
consistent with \eqref{hc}, is as follows:
\begin{equation} \label{eq:nlp.pen}
\min \, p^Tx + \nu \| c(x) \|_1 + I_{[\underline{x},\overline{x}]}(x),
\end{equation}
where the indicator function $I_{[\underline{x},\overline{x}]}(x)$
takes the value $0$ if the bound constraints $ \underline{x} \le x \le
\overline{x}$ are satisfied, and $\infty$ otherwise. This problem was
considered in \cite{KimW14a}, where a sequential $\ell_1$-linear
programming algorithm was proposed to solve it. When ProxDescent is
applied to \eqref{eq:nlp.pen}, the only essential difference between
it and the algorithm of \cite{KimW14a} is that the latter uses an
$\ell_\infty$ (``box-shaped'') trust region on the step $d$ in its
subproblem, in place of the quadratic prox-term $(\mu/2) |d|^2$ of
\eqref{pls}, which is equivalent to an $\ell_2$-norm (circular) trust
region.  (In fact, the code used to obtain numerical results in
\cite{KimW14a} was easily modified to produce the results shown here.)

We use ProxDescent on the framework \eqref{eq:nlp.pen} to solve two
problems from \cite{KimW14a}, arising from the restoration of stable
operation of a power grid following a disruption, such as loss of a
transmission line. In this application, the variables $x$ represent
voltage phasors at each node of the grid and various slacks in the
formulation, while $c(x)$ is derived from the (nonlinear) model of AC
power flow. The bounds on $x$ represent acceptable deviations of
voltage magnitude from $1$, and acceptable values of the amount of
load to be shed from the nodes of the grid. The first problem is of a
type that commonly arises in the power grid application, where the
number of constraints active at the solution of \eqref{eq:nlp} equals
the number of variables, so that methods that use linearization of the
constraints (including ProxDescent and the algorithm of
\cite{KimW14a}) reduce to Newton's method on the system of nonlinear
equations represented by the active constraints, and rapid convergence
is observed once the active set has been determined correctly. In the
second problem, the number of active constraints is fewer than the
number of variables, so rapid convergence cannot be expected from a
first-order method. Here, as in \cite{KimW14a}, convergence is
considerably slower.


For both datasets, we set $\tau=1.5$, $\sigma=10^{-3}$, and $\mumin
=10^{-3}$ in ProxDescent, and terminate when the relative change in
objective falls below $10^{-5}$. Results for the first problem are
shown in Figure~\ref{fig:case57}. This problem has $143$ variables and
$143$ active constraints at the solution. Convergence occurred in $26$
iterations, with a total of $44$ subproblems solved. In
Figure~\ref{fig:case57}, we consider separately the contributions from
the $p^Tx$ term and the penalty term $\| c(x) \|_1$. Both exhibit
steady linear convergence to their optimal values. Two subproblems are
solved on most iterations, because we try to decrease the value of
$\mu$ then increase it again when the smaller value fails to satisfy
the sufficient decrease test.  Note that $\mu_k$ stabilizes at $.058$
on later iterations. Less than one second of execution time was
required on a MacBook Pro (2 GHz Intel i7 with 8GB RAM), using Matlab,
the {\sc MATPOWER} package \cite{ZimMT11} for modeling and solving
power grid problems, and CPLEX. The number of major iterations
required was similar to the S$\ell_1$LP algorithm described in
\cite{KimW14a}. We also coded a version of the algorithm that attempts
to determine the set of active constraints manually once the active
set appears to have settled down, solving a system of nonlinear
equations based on the KKT conditions and making small heuristic
adjustments to the active set in search of a stationary point.  This
version takes 21 iterations, and about half the run time.

\begin{figure} 
\begin{center}
\includegraphics[width=1.0\textwidth]{./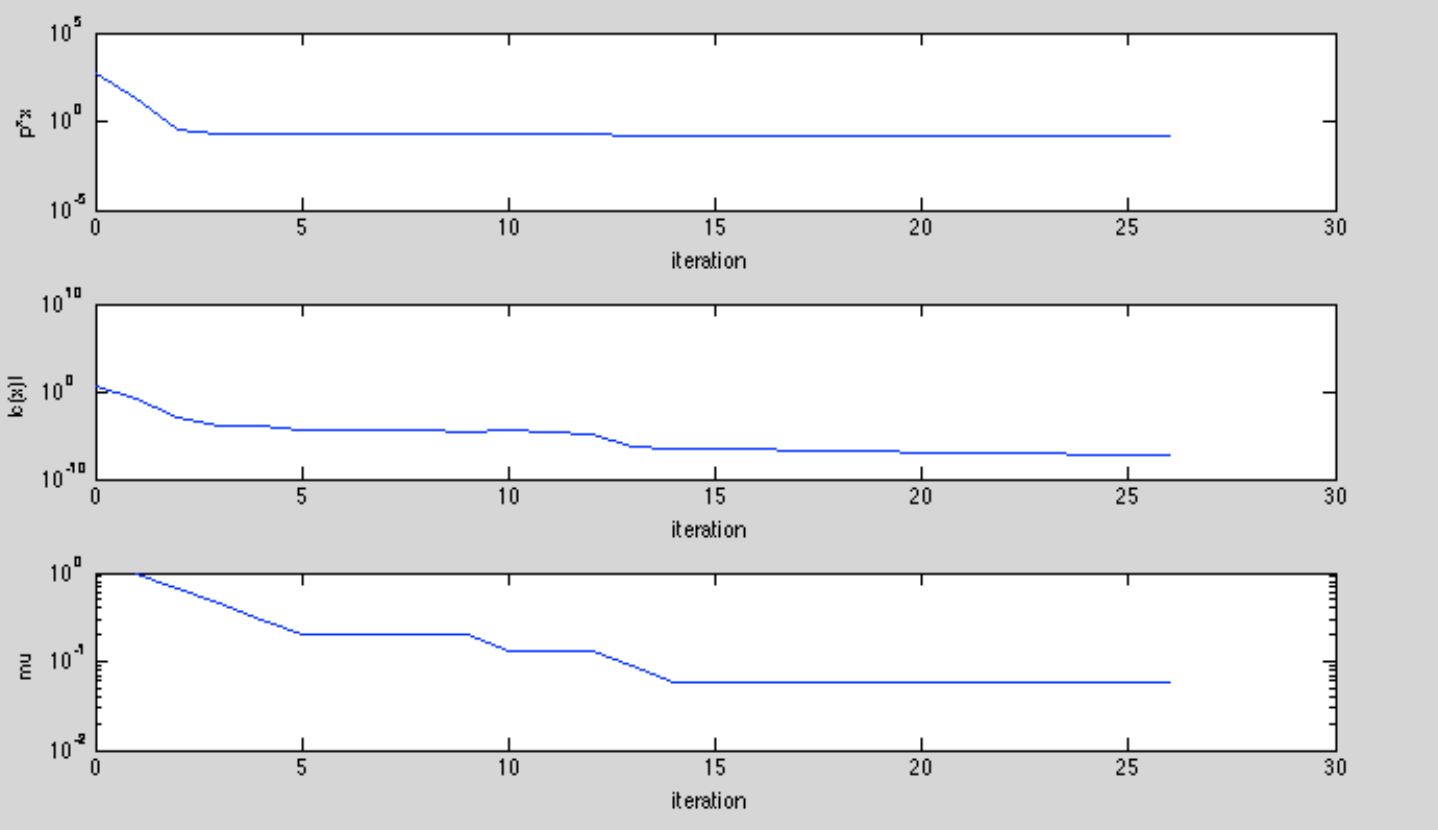}
\end{center}
\caption{Results for formulation (\ref{eq:nlp.pen}), derived from a
  57-bus power grid in which the number of active constraints at the
  solution equals the number of variables. Top figure shows $p^Tx_k$
  plotted against iteration $k$; middle figure show $\|c(x_k)\|_1$;
  bottom figure shows $\mu_k$.\label{fig:case57}}
\end{figure}

The second data set for \eqref{eq:nlp.pen} is derived from a 118-bus
system, and has 262 variables, and 260 constraints active at the
solution. Convergence behavior is quite similar to the first case,
featuring Q-linear convergence at a rate of about $.5$ for the
constraint violation measure $\| c(x_k) \|_1$. $\mu_k$ stabilizes at
the same value $.058$ as for the first data set, and convergence is
declared after $21$ iterations, with $34$ subproblems solved, in about
$.6$ seconds of CPU time. An active-set version of the approach
requires only $11$ iterations and about $.18$ seconds of CPU time.

\begin{figure} 
\begin{center}
\includegraphics[width=1.0\textwidth]{./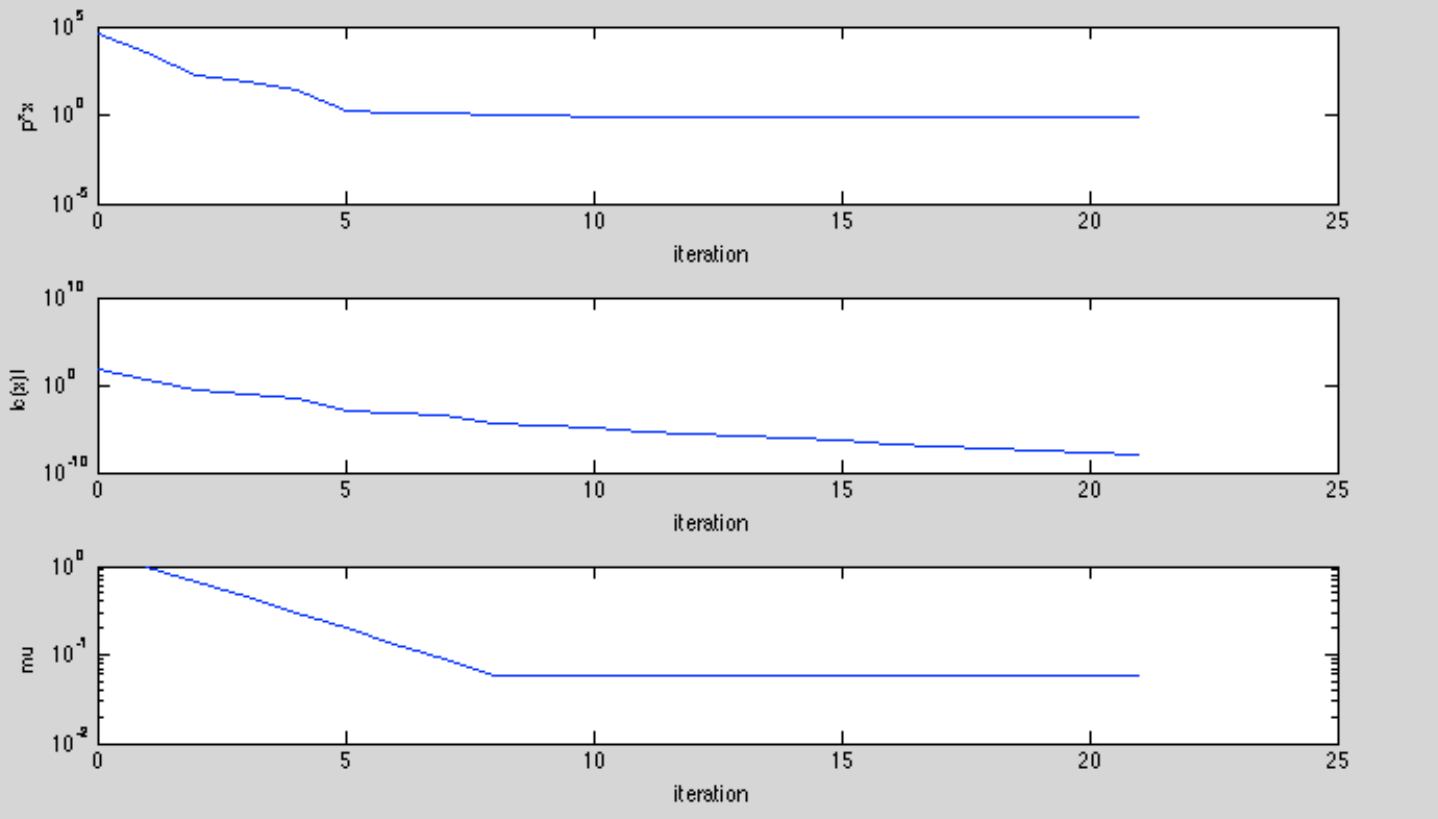}
\end{center}
\caption{Results for formulation (\ref{eq:nlp.pen}), derived from a
  118-bus power grid in which the number of active constraints at the
  solution is smaller than the number of variables. Top figure shows
  $p^Tx_k$ plotted against iteration $k$; middle figure show
  $\|c(x_k)\|_1$; bottom figure shows $\mu_k$.\label{fig:case118}}
\end{figure}


\section*{Acknowledgments}

We acknowledge the support of NSF Grants 0430504 and DMS-0806057.  We
are grateful for the comments of two referees, which were most helpful
in revising earlier versions. We thank Mr. Taedong Kim for obtaining
computational results for the formulation \eqref{eq:nlp.pen}.

\appendix

\section*{Standard theory}
The basic building block
for variational analysis (see Rockafellar and Wets~\cite{Roc98} or
Mordukhovich~\cite{Mor06}) is the {\em normal cone} to a (locally) closed set $S$ at a
point $s \in S$, denoted by $N_S(s)$.  It consists of all {\em normal vectors\/}:  limits of sequences of vectors of the form  $\lambda(u-v)$ for points $u,v \in \R^m$ approaching $s$ such that $v$ is a closest point to $u$ in $S$, and scalars $\lambda > 0$.  On the other hand, {\em tangent vectors} are limits of sequences of vectors 
of the form  $\lambda(u-s)$ for points $u \in S$ approaching $s$ and scalars $\lambda > 0$.  The set $S$ is {\em Clarke regular} at $s$ when the inner product of any normal vector with any tangent vector is always nonpositive.  Closed convex sets and smooth manifolds are everywhere Clarke regular.

The {\em epigraph} of a function $h : \R^m \to \bar\R$ is the set
\[
\mbox{epi} \, h ~=~ \{ (c,r) \in \R^m \times \R : r \ge h(c) \}.
\]
If the value of $h$ is finite at some point $\bar c \in \R^m$, then $h$ is lower semicontinuous nearby if and only if its epigraph is locally closed around the point $\big(\bar c, h(\bar c)\big)$.  Henceforth we focus on that case.

The {\em subdifferential} of $h$ at $\bar c$ is the set
\[
\partial h(\bar c)  ~=~ \big\{ v  \in \R^m \, : \, (v,-1) \in
N_{\mbox{\scriptsize\mbox{epi}} \, h} (\bar{c},h\big(\bar{c})\big) \big\}
\]
and the {\em horizon subdifferential} is
\begin{equation} \label{eq:horepi}
\partial^{\infty} h(\bar c)  ~=~ \big\{ v  \in \R^m  :  (v,0) \in
N_{\mbox{\scriptsize\mbox{epi}} \, h} \big(\bar{c},h(\bar{c})\big) \big\}
\end{equation}
(see \cite[Theorem~8.9]{Roc98}).  The function $h$ is {\em
  subdifferentially regular} at $\bar c$ if its epigraph is Clarke
regular at $\big(\bar c, h(\bar c)\big)$ (as holds in particular if
$h$ is convex lower semicontinuous, or smooth).  Subdifferential regularity implies that
$\partial h(\bar c)$ is a closed and convex set in $\R^m$, and its
recession cone is exactly $\partial^{\infty} h(\bar{c})$ (see
\cite[Corollary~8.11]{Roc98}).  In the case when $h$ is locally
Lipschitz, it is almost everywhere differentiable: $h$ is then
subdifferentially regular at $\bar c$ if and only if its directional
derivative for every direction $d \in \R^m$ equals
\[
\limsup_{c \to \bar c} \ip{\nabla h(c)}{d},
\]
where the $\limsup$ is taken over points $c$ where $h$ is
differentiable.  

Consider a {\em subgradient} $\bar v \in \partial h(\bar c)$, and a {\em localization}
of the subdifferential mapping $\partial h$ around the point
$(\bar c,\bar v)$, by which we mean a set-valued mapping $T\colon \R^m \tto \R^m$ defined by
\[
T(y) = 
\left\{
\begin{array}{ll}
\partial h(y) \cap B_{\epsilon}(\bar v) & 
(|y - \bar c| \le \epsilon,~ |h(y) - h(\bar c)| \le \epsilon) \\
\emptyset &
(\mbox{otherwise})
\end{array}
\right.
\]
for some constant $\epsilon>0$.  The function $h$ is {\em prox-regular at} 
$\bar c$ {\em for} $\bar v$ if some such localization  is {\em hypomonotone\/}:  that is, for some constant $\rho > 0$, 
we have
\[
z \in T(y) ~\mbox{and}~ z' \in T(y') ~~\Rightarrow~~ \ip{z'-z}{y'-y} \ge -\rho|y'-y|^2.
\]
This definition is equivalent to Definition \ref{def:proxreg} (with the same constant $\rho$) \cite[Example~12.28 and Theorem~13.36]{Roc98}.  Prox-regularity at $\bar c$ (for all subgradients $v$) implies subdifferential regularity.

A general class of prox-regular functions common in engineering
applications is ``lower $\cC^2$'' functions
\cite[Definition~10.29]{Roc98}.
A function $h:\R^m \to \R$ is {\em lower} $\cC^2$ around a point $\bar
c \in \R^m$ if $h$ has the local representation
\[
h(c) = \max_{t \in T} f(c,t)  ~~\mbox{for}~ c \in \R^m ~\mbox{near}~ \bar c,
\]
for some function $f:\R^m \times T \to \R$, where the space $T$ is
compact and the quantities $f(c,t)$, $\nabla_c f(c,t)$, and
$\nabla^2_{cc} f(c,t)$ all depend continuously on $(c,t)$. All lower $\cC^2$
functions are prox-regular \cite[Proposition~13.3]{Roc98}. A simple equivalent property, useful in
theory though harder to check in practice, is that $h$ has the form
$g-\kappa |\cdot |^2$ around the point $\bar c$ for some continuous
convex function $g$ and some constant $\kappa$.

The normal cone is crucial to the definition of another central variational-analytic tool.  Given a set-valued mapping $F : \R^p \tto \R^q$ with closed graph, 
\[
\mbox{gph}\,F = \{(u,v) : v \in F(v)\},
\]
at any point $(\bar u,\bar v) \in \mbox{gph}\,F$, the {\em coderivative} 
$D^* F(\bar u | \bar v) : \R^q \tto \R^p$ is defined by
\[
w \in D^* F(\bar u | \bar v)(y) ~\Leftrightarrow~ 
(w,-y) \in N_{\mbox{\scriptsize gph}\,F}(\bar u,\bar v).
\]
The coderivative generalizes the adjoint of the derivative of smooth vector function:  
for smooth $c : \R^n \to \R^m$, the set-valued mapping $x \mapsto F(x) := \{c(x)\}$ has coderivative given by $D^*F(x|c(x))(y) = \{\nabla c(x)^* y\}$ for all $x \in \R^n$ and $y \in \R^m$.  As we see next, coderivative calculations drive two of the arguments in Section \ref{preliminaries}.

\subsubsection*{Proof of Corollary \ref{co:pert}}
Corresponding to any linear map $A \colon \R^p \rightarrow \R^q$, define a
set-valued mapping $F_A \colon \R^p \tto \R^q$ by $F_A(u) = Au-S$.  A
coderivative calculation shows, for vectors $v \in \R^p$,
\[
D^* F_A(0|0)(v) =
\left\{
\begin{array}{ll}
\{A^*v\} & \big(v \in N_S(0)\big) \\
\emptyset & (\mbox{otherwise}).
\end{array}
\right.
\]
Hence, by assumption, the only vector $v \in \R^p$ satisfying $0 \in
D^* F_{\bar A}(0|0)(v)$ is zero, so by \cite[Thm 9.43]{Roc98}, the
mapping $F_{\bar A}$ is metrically regular at zero for zero.  Applying
Theorem \ref{metric} shows that there exist constants $\delta,\gamma
> 0$ such that, if $\|A-\bar A\| < \delta$ and $|v| < \delta$, then we have
\[
\dist\!\big(0,F_A^{-1}(-v)\big) \le \gamma \, \dist\!\big(-v,F_A(0)\big),
\]
or equivalently,
\[
\dist\!\big(0,A^{-1}(S-v)\big) \le \gamma \, \dist (v,S).
\]
Since $0 \in S$, the right-hand side is bounded above by $\gamma |v|$,
so the result follows. ~~~ $\Box$ 

\subsubsection*{Proof of Corollary \ref{constraint}}
  We simply need to check that the set-valued mapping $G \colon \R^p
  \tto \R^q$ defined by $G(z) = F(z) - S$ is metrically regular $\bar
  z$ for zero.  Much the same coderivative calculation as in the proof
  of Corollary~\ref{co:pert} shows, for vectors $v \in \R^p$, the formula
\[
D^* G(\bar z|0)(v) =
\left\{
\begin{array}{ll}
\{\nabla F(\bar z)^*v\} & \big(v \in N_S(\bar z)\big) \\
\emptyset & (\mbox{otherwise}).
\end{array}
\right.
\]
Hence, by assumption, the only vector $v \in \R^p$ satisfying 
$0 \in D^* G(\bar z|0)(v)$ is zero, so metric regularity follows by
\cite[Thm 9.43]{Roc98}. ~~~ $\Box$

\subsubsection*{Alternative proof of Theorem \ref{metric}}
In the text we gave a short ad hoc proof of Theorem \ref{metric}.  Here we present a
more formal approach.  Denote the space of linear maps from $\R^p$ to
$\R^q$ by $L(\R^p,\R^q)$, and define a mapping $g \colon L(\R^p,\R^q)
\times \R^p \to \R^q$ and a parametric mapping $g_H \colon \R^p \to
\R^q$ by $g(H,u)= g_H(u) = Hu$ for maps $H \in L(\R^p,\R^q)$ and
points $u \in \R^p$.  Using the notation of \cite[Section~3]{DmiK08},
the Lipschitz constant $l[g](0;\bar u,0)$, is by definition the
infimum of the constants $\rho$ for which the inequality
\begin{equation} \label{eq:asl1}
d\big(w,g_H(u)\big) \le \rho d\big(u,g_H^{-1}(w)\big)
\end{equation}
holds for all triples $(u,w,H)$ sufficiently near the triple 
$(\bar u, 0, 0)$.  Inequality \eqref{eq:asl1} says simply
\[
|w-Hu| \le \rho |u-z| ~~\mbox{for all $z \in \R^p$ satisfying $Hz=w$},
\]
a property that holds providing $\rho \ge \|H\|$.  We deduce
\begin{equation} \label{eq:asl2}
l[g](0;\bar u,0) = 0.
\end{equation}
We can also consider $F+g$ as a set-valued mapping from
$L(\R^p,\R^q) \times \R^p$ to $\R^q$, defined by $(F+g)(H,u) = F(u)
+ Hu$, and then the parametric mapping $(F+g)_H \colon \R^p \tto
\R^q$ is defined in the obvious way: in other words, $(F+g)_H(u) =
F(u) + Hu$.  According to \cite[Theorem~2]{DmiK08}, equation \eqref{eq:asl2} implies the
following relationship between the ``covering rates'' for $F$ and
$F+g$:
\[
r[F+g](0;\bar u,\bar v) = r[F](\bar u, \bar v).
\]
The reciprocal of the right-hand side is, by definition, the
infimum of the constants $\kappa > 0$ such that inequality
(\ref{unperturbed}) holds for all pairs $(u,v)$ sufficiently near
the pair $(\bar u, \bar v)$.  By metric regularity, this number is
strictly positive.  On the other hand, the reciprocal of the
left-hand side is, by definition, the infimum of the constants
$\gamma > 0$ such that inequality (\ref{perturbed}) holds for all
triples $(u,v,H)$ sufficiently near the pair $(\bar u, \bar v,0)$.


\end{document}